\newtheorem{defi}{Definition}[section]
\newtheorem{thm}{Theorem}[section]
\newtheorem{lem}{Lemma}[section]
\newtheorem{rmk}{Remark}[section]
\newtheorem{cor}{Corollary}[section]
\newtheorem{prop}{Proposition}[section]
\newcommand{\vv}[1]{\boldsymbol{#1}}
\newcommand{\normm}[1]{{ \vert\kern-0.25ex \vert\kern-0.25ex \vert #1
		\vert\kern-0.25ex \vert\kern-0.25ex \vert}}
\renewcommand\paragraph{\@startsection{paragraph}{4}{\z@}%
            {-1.5ex\@plus -1ex \@minus -.25ex}%
            {1.25ex \@plus .25ex}%
            {\normalfont\normalsize\bfseries}}
\numberwithin{equation}{section}
\newcommand{\beq}{\begin{equation}}
\newcommand{\eeq}{\end{equation}}
\newcommand{\ben}{\begin{eqnarray}}
\newcommand{\een}{\end{eqnarray}}
\newcommand{\beno}{\begin{eqnarray*}}
\newcommand{\eeno}{\end{eqnarray*}}
\let\f=\frac
\newcommand{\bit}{\begin{itemize}}
	\newcommand{\eit}{\end{itemize}}
\newcommand{\N}{{\mathbb N}}
\newcommand{\Z}{{\mathbb Z}}
\newcommand{\R}{{\mathbb R}}
\newcommand{\pa}{\partial}
\newcommand{\eps}{\varepsilon}
\newcommand{\ov}{\overline}
\newcommand{\mult}{\otimes}
\newcommand{\ba}{\begin{aligned}}
\newcommand{\ea}{\end{aligned}}
 \def\na{\nabla}
\let\pa=\partial
\let\wh=\widehat
\def\mP{\mathbf{P}}
\def\a{\mathfrak{a}}
\def\mF{\mathbf{F}}
\def\b{\mathfrak{b}}
\def\p{\mathfrak{p}}
\def\l{\mathfrak{l}}
\def\m{\mathfrak{m}}
\def\cF{{\mathcal F}}
\def\fM{{\mathfrak M}}
\def\cP{{\mathcal P}}
\def\cS{{\mathcal S}}
\def\virgp{\raise 2pt\hbox{,}}
\def\cdotpv{\raise 2pt\hbox{;}}
\def\C{\mathop{\mathbb C\kern 0pt}\nolimits}
\def\DD{\mathop{\mathbb D\kern 0pt}\nolimits}
\def\EE{\mathop{{\mathbb E \kern 0pt}}\nolimits}
\def\K{\mathop{\mathbb K\kern 0pt}\nolimits}
\def\N{\mathop{\mathbb N\kern 0pt}\nolimits}
\def\Q{\mathop{\mathbb Q\kern 0pt}\nolimits}
\def\R{\mathop{\mathbb R\kern 0pt}\nolimits}
\def\SS{\mathop{\mathbb S\kern 0pt}\nolimits}
\def\<{\langle}
\def\>{\rangle}
\def\F{{\mathfrak F }}
\def\U{{\mathfrak U }}
\def\tP{{\tilde{\mathcal{P}} }}
\def\tF{{\tilde{\mathfrak F} }}
\def\tS{\mathcal{S}}
\def\vf{\varphi}
\def\gs{\gtrsim}
\def\ls{\lesssim}
\def\S{\mathbb{S}}
\newcommand{\br}[1]{\langle #1 \rangle_v}
\newcommand{\rr}[1]{\left( #1 \right)}
\newcommand{\nr}[1]{\left| #1 \right|}
\newcommand{\LLO}[1]{\| #1 \|_{L^1}}
\newcommand{\LLT}[1]{\| #1 \|_{L^2}}
\newcommand{\LLF}[1]{\| #1 \|_{L^\infty}}
\newcommand{\HHL}[1]{\| #1 \|_{H^{-N}}}
\newcommand{\HHLL}[1]{\| #1 \|_{H^{-N}_{-N}}}
 \newcommand{\lr}[1]{\langle #1 \rangle}
\def\nn{{\mathrm n}}
\newcommand{\wei}{\langle v \rangle}
\def\sss{\mathsf s}
\def\vphi{\varphi}
\def\al{\alpha}
\def\tm{\tilde{m}}
\def\tphi{\tilde{\vphi}}
\def\fff{\mathbf f}
\def\ggg{\mathbf g}
\def\hhh{\mathbf h}
\begin{document}

\title[Landau equation in $H^{-\f12}$ space]{Existence, uniqueness and smoothing estimates for spatially homogeneous Landau-Coulomb equation in $H^{-\frac12}$ space with polynomial tail}

\author{Ling-Bing He}
\address[Ling-Bing He]{Department of Mathematical Sciences, Tsinghua University, Beijing, 100084, P.R. China.}
\email{hlb@tsinghua.edu.cn}

\author{Jie Ji}
\address[J. Ji]{School of Mathematics, Nanjing University  of Aeronautics and Astronautics\\
	Nanjing 211106,  P. R.  China.}
\email{jij\_24@nuaa.edu.cn}

\author{Yue Luo}
\address[Yue Luo]{Department of Mathematical Sciences, Tsinghua University, Beijing, 100084, P.R. China.}
\email{luo-y21@mails.tsinghua.edu.cn}

\begin{abstract} We demonstrate that the spatially homogeneous Landau-Coulomb equation exhibits  global existence and uniqueness around the space $H^{-\frac12}_3\cap L^1_{7}\cap L\log L$. Additionally, we furnish several quantitative assessments regarding the smoothing estimates in weighted Sobolev spaces. The new ingredients of the proof lie in the localized techniques in both phase space and frequency space.
\end{abstract}

\maketitle




\maketitle

\setcounter{tocdepth}{1}
\tableofcontents

\setcounter{tocdepth}{1}
\section{Introduction}
The Landau equation, initially formulated by Landau in 1936, serves as a valuable tool for describing the temporal evolution of a plasma system, characterized by charged particle collisions occurring under the influence of the Coulomb potential.  Mathematically, the  Cauchy problem of the Landau-Coulomb equation can be described as follows:
 \begin{equation}\label{1}
\partial_t f= Q(f,f)(v),
\end{equation}
complemented with initial data $f_0=f_0(v) \ge 0$. Here the Landau operator $Q$ is defined by
\begin{equation} \label{12d}
Q(g,h) =\nabla \cdot {\Big (}[ a*g ]\;\nabla h- [ a*\nabla g ] \; h{\Big )},
\end{equation}
with 
\begin{equation} \label{13d}
a(z)=|z|^{-1} \, \left(\mathrm{Id} -\frac{z\otimes z}{|z|^2} \right).
\end{equation}
We emphasize that $f := f(t,v) \ge 0$ stands for the distribution of particles that at time $t \in \R_+$ possess the velocity $v \in \R^3$.

\subsection{Fundamental properties of Landau equation} 

 To derive the basic properties of the equation, we first introduce  
the weak 
 formulation of the Landau operator $Q$ in the following way:  
\ben\label{Qweak1}
&& \int_{\R^3} Q(f,f) (v)\, \varphi(v) \, dv= - \frac12 \, \sum_{i=1}^3\sum_{j=1}^3 \iint_{\R^3 \times \R^3} a_{ij}(v-v_{*}) \\
&&\quad\times
 \left\{ \frac{\partial_j f}{f}(v) - \frac{\partial_j f}{f}(v_{*})  \right\} \left\{ \partial_i \varphi(v) - \partial_i \varphi(v_{*})  \right\}   
 f(v) f(v_{*}) \, dv \, dv_{*}, \nonumber
\een
where $\varphi$ is a suitable test function. Consequently,  
the operator indeed conserves (at the formal level) mass, momentum and energy, more precisely
\ben\label{cons}
\int_{\R^3} Q(f,f)(v) \, \varphi(v) \, dv = 0 \quad\text{for}\quad \varphi(v) = 1, v_i, \frac{|v|^2}2,\;i=1,2,3.
\een

We also deduce from formula (\ref{Qweak1}) the entropy structure of the operator (still at the formal level) by taking the test function $\varphi(v) = \log f(v)$, 
that is 
\ben\label{D(f)}
D(f)  := - \int_{\R^3} Q(f,f)(v)\, \log f(v) \, dv
\een
$$ =  \frac12 \sum_{i=1}^3\sum_{j=1}^3\iint_{\R^3 \times \R^3} a_{ij}(v-v_{*}) \left\{ \frac{\partial_i f}{f}(v) - \frac{\partial_i f}{f}(v_{*})  \right\} \left\{ \frac{\partial_j f}{f}(v) - \frac{\partial_j f}{f}(v_{*})  \right\}   \, f(v)\, f(v_{*}) \, dv \, dv_{*}.  $$

Note that $D(f) \ge 0$ since the matrix $a$ is (semi-definite) positive. One may check that if $D(f)=0$, then $f$ is a Maxwellian distribution, that is $f = \mu_{\rho,u,T}$, with
\ben\label{MaxGen}
\mu_{\rho,u,T}(v) = \frac{\rho}{(2\pi T)^{3/2}} \, e^{-\frac{|v-u|^2}{2T}},
\een
where $\rho\ge 0$ is the density, $u \in \R^3$ is the mean velocity and $T>0$ is the temperature of the plasma. 
They are defined by
\begin{equation} \label{cco}
\rho = \int_{\R^3} f(v) \, dv, \quad 
u = \frac{1}{\rho} \int_{\R^3} v \,f(v) \, dv, \quad
T = \frac{1}{3 \rho} \int_{\R^3} |v-u|^2\, f(v) \, dv.
\end{equation}
Thanks to \eqref{cons}, we have  (when $f: = f(t,v)$ is a solution of equations (\ref{1}-\ref{13d})),  
\ben\label{conssh}
\forall t \ge 0, \qquad \rho(t)  = \rho (0), \quad u(t)=u(0),
\quad T(t)=T(0),
\een
which implies that the parameters $\rho,u, T$ are constants. 
\medskip

Denoting  the relative entropy $H(t)$ by  
 \ben\label{DefHt}
H(t):=H(f|\mu_{\rho,u,T})(t) := \int_{\R^3} \bigg(f(t,v) \log \bigg( \f{f(t,v)}{\mu_{\rho,u,T}} \bigg) -f(t,v) +\mu_{\rho,u,T}\bigg) \, dv,\een 
then the famous $H$-Theorem(still at the formal level) holds, i.e.,
\ben\label{entropyeq}
\frac{d}{dt}H(t) = -D(f(t,\cdot)) \le 0.
\een

Finally, we remark that if we introduce the quantity
\ben\label{Defbc}
b_i(z): = \sum_{j=1}^3 \partial_j a_{ij}(z) = -2 \,z_i\, |z|^{-3}, \quad 
\een
 the Landau operator with Coulomb potential can also be written as
\ben\label{Qbis}
Q(f,f) 
= \sum_{i=1}^3 \partial_i \bigg( \sum_{j=1}^3(a_{ij}*f)\, \partial_j f - (b_i *f)\, f  \bigg)   
= \sum_{i=1}^3\sum_{j=1}^3(a_{ij}*f) \,\partial_{ij}f + 8\pi \,f^2,
\een
where we used the identity $\sum_{i=1}^3\pa_i b_i(z)= - 8\pi\delta_0(z)$. In this sense, the Landau equation is comparable to the semi-linear heat equation
\ben\label{SLHeq} \pa_tf-\Delta_v f=8\pi f^2.
\een

\subsection{Previous work} Before going further, let us provide   a brief review on the previous work,  specifically on the global well-posedness and the regularity problem.
 \smallskip

 \noindent$\bullet$ \underline{\bf{Uniqueness result.}} The conditional uniqueness is due to Fournier in \cite{NF}. In fact, he proved  the uniqueness of solutions for equations (\ref{1}-\ref{13d}) in the class $L^\infty_{loc}([0,\infty); L^1_2(\R^3)) \cap L^1_{loc}([0,\infty);L^\infty(\R^3))$. This result implies a local well-posedness result, assuming further that the initial data lie in $L^p (\R^3)$ with $p>3/2$. For more details, we refer readers to \cite{GL,GGL}.   It is worth mentioning that the authors of \cite{GSUN} recently applied the method of the \(\mathcal{M}\)-operator to obtain uniqueness of solutions in the critical space \(L^{3/2}\), where \(L^\infty\) is precisely non-integrable, so that Fournier's result does not apply.

 \noindent$\bullet$ \underline{\bf{$H$-solution and weak solution.}} Villani \cite{Vi} established the global existence of the so-called $H$-solutions for equations (\ref{1}-\ref{13d}) based on the entropy dissipation $D(f)$. The $H$-solution belongs to $L^1_{loc}([0,\infty); L^3_{-3}(\R^3))$ if it is well-approximated, thanks to the functional estimates in \cite{D, DX, CDH} for variants of inequality \eqref{D(f)}. This property is sufficient to show that the $H$-solution is indeed a weak solution in the usual sense. In \cite{SJi} the space $ L^3_{-3}$ was improved to $L^3_{-5/3}$. 
 Regarding the longtime behavior of the solution, we refer to \cite{CDH,DHJ} for the convergence to the equilibrium with quantitative estimates. See also \cite{GGIV} for the measure of the singular set via Hausdorff measure thanks to the De Giorgi's method applied to a scaled suitable weak solution.

 \noindent$\bullet$ \underline{\bf{Fisher information and global well-posedness.}} The breakthrough of the global well-posedness is due to Guillen and   Silvestre in \cite{GS}. They proved that the  Fisher information is monotone decreasing in time and thus the classical solutions to the equation will never blow up. See the recent development in \cite{GGL2,SJi}.

\subsection{Goals and difficulties}
Our primary focus lies on well-posedness for the spatially homogeneous Landau equation with a Coulomb potential. Specifically, we examine uniqueness and smoothing estimates where the initial data belong to negative Sobolev spaces with polynomial tail. 

From \cite{GS,SJi}, we have the following pictures: (1)  The weak solution can generate the Fisher information at any positive time; (2) Once it produces, the  Fisher information   will decrease in time. Thus the weak solutions are the classical solution for any positive time. This motives us to consider the following two questions:

${\bf (Q1)}$. Our initial query aims to identify the most fundamental function space required for the well-posedness of the Landau-Coulomb equation. The uniqueness inherent in this space ensures that the local well-posedness can be extrapolated to achieve global well-posedness.

${\bf (Q2)}$. Our second inquiry is to determine, using \eqref{ToyM} as a model, the growth of Sobolev regularity with respect to the velocity variable at infinity and the singularity with respect to the time variable at the origin.

\smallskip

Next, we will discuss the challenges associated with these two questions.

\noindent$\bullet$ Previous work shows that the equation is well-posed in weighted $L^p$ spaces with $p\geq3/2$. For the semi-linear heat equation 
\eqref{SLHeq}, we notice that $L^{3/2}$ is the critical space since the equation \eqref{SLHeq} is non-unique in $L^p$ with $p<3/2$(see \cite{HW}). It is natural to ask whether this fact holds or not for the Landau-Coulomb equation. This motivates us to consider the equation in $H^{-1/2}$ space. By the smoothing estimates for the heat equation, i.e.,
$$ \|e^{t\triangle}f_0\|_{L^\infty}\ls t^{-1}\|f_0\|_{H^{-1/2}},$$ we have no idea whether  Fournier's conditional uniqueness result can be applied or not in this situation. Thus we need a new framework to solve the problem.

\smallskip

\noindent$\bullet$ The smoothing effect for the homogeneous Landau equation has been extensively addressed and it can be proven using methods such as the De Giorgi technique or the standard energy method within the $L^2$ framework(refer to \cite{GL,GIF,CHDH} for more details). The former utilizes the De Giorgi method and Schauder estimates within the framework of parabolic equations to derive \( L^\infty \) and \( C^\alpha \) estimates, which are further refined to \( C^\infty \) estimates through a bootstrap argument. While this paper, adopting the latter's approach, focuses on smoothness in Sobolev spaces and aims to characterize the relationship between Sobolev regularity and the growth of weights.
In the case of the toy model, i.e.,
 \ben\label{ToyM} \pa_tf+(-\Delta_v)(\lr{v}^{-3}f)=0, \een 
  it has been proven in \cite{HJ2} that the solution with so-called {\it typical rough and slowly decaying data} can grow with polynomial rate in the $v$ variable for any positive time.  To illustrate this, following the informal computation in \cite{HJ2}, if the initial data $f_0$ is rough and decays very slowly in the high-velocity regime (i.e., $f\in L^2_\ell$ with $\ell<\infty$), the following scenario can occur:
\beno
\pa^\alpha f&\sim& \sum_{k\ge-1} (2^{-3k}t)^{-\frac{3+|\alpha|}{2}}(\pa^\alpha G)\big((2^{-3 k}t)^{-\frac{1}{2}}\cdot\big)(\cP_kf_0)\\
&=&\sum_{k\ge-1} t^{-\frac{|\alpha|}{2}}2^{(3\frac{|\alpha|}{2}-\ell)k}\big[ t^{-\frac{3}{2}}2^{\frac{3}{2}3 k}(\pa^\alpha G)\big((2^{-3 k}t)^{-\frac{1}{2}}\cdot\big)\big](2^{k\ell}\cP_kf_0),
\eeno
where $G$ is a Gaussian function and $\cP_kf(v)\sim f(v)\mathrm{1}_{|v|\sim 2^k}$. This implies that
\ben\label{PointwToyM}|(\pa^\alpha f)(t,v)|\sim C(t,f_0)\lr{v}^{3\frac{|\alpha|}{2}-\ell}.\een
Thus, if $|\alpha|\ge 2\ell/3$, then $\pa^\alpha f$ will grow with a polynomial rate of $3|\alpha|/2-\ell$ in the high-velocity regime, suggesting that enhancing the regularity of the solution necessitates the use of the Sobolev spaces with negative weights.

\subsection{Notations, main results and strategies}    We  list  notations and function spaces in the below.
\subsubsection{Notations}
$(i)$  We use the notations $a\ls b(a\gs b)$ and $a\ls_c b(a\gs_c b)$ to indicate that there is a constant $C$ which is uniform or depends on parameter $c$ and may be different on different lines, such that $a\leq Cb(a\geq Cb)$. We use the notation $a\sim b$($a\sim_c b$) whenever $a\ls b$ and $b\ls a$($a\ls_c b$ and $a\gs_cb$).

$(ii)$ We denote $C_{a_1,a_2,\cdots,a_n}$(or $C(a_1,a_2,\cdots,a_n)$) by a constant depending on parameters $a_1,a_2,\cdots,a_n$. Moreover,  parameter $\varepsilon$ is used  to represent different positive numbers much less than 1 and determined in different cases.

$(iii)$ We write $a\pm$ to indicate $a\pm\varepsilon$, where $\varepsilon>0$ is sufficiently small.  We set  $a^+:=\max\{0,a\}$, $a^-:=-\min\{a,0\}$ and use $[a]$ to denote the maximum integer which does not exceed $a$.

$(iv)$ $\mathbf{1}_\Omega$ is the characteristic function of the set $\Omega$. We use $\br{f,g}$ to denote the inner product of $f, g$ in space $L^2(\R^3_v)$.

$(v)$ We denote the Fourier transform with respect to the variable \( v \) by \( \mathcal{F} \), and the inverse Fourier transform by \( \mathcal{F}^{-1} \).

$(vi)$ Suppose $A$ and $B$ are two operators, then the commutator $[A,B]$ between $A$ and $B$ is defined by $[A,B]=AB-BA$. 


\subsubsection{Function spaces}
We  give several definitions to spaces involving different variables.
\smallskip

 $(1)$ \textit{Function spaces in $v$ variable.} Let $f=f(v)$ and $\<v\>:=(1+|v|^2)^{1/2}$. For $m,l\in\R $, we define the weighted Sobolev space $H_l^m$ and $H_l^{m,\sss}$ as follows:
$H^m_l:=\Big\{f(v)|\|f\|_{ H^m_l}=\|\<D\>^m\<\cdot\>^lf\|_{L^2}<+\infty\Big\}$ and
\ben\label{hms}H^{m,\sss}_l:=\Big\{f(v)|\|f\|_{ H^{m,\sss}_l}=\|\<D\>^m\log^\sss(2+\<D\>)\<\cdot\>^lf\|_{L^2}<+\infty\Big\}.\een
Here $a(D)$ is a pseudo-differential operator with the symbol $a(\xi)$, i.e., \beno
(a(D)f)(v):= \frac{1}{(2\pi)^3}\int_{\R^3}\int_{\R^3}e^{i(v-u)\xi}a(\xi)f(u)dud\xi.
\eeno

\noindent The $L\log L$ space is defined by the identities $\|f\|_{L\log L}:=\int_{\R^3} f\log f dv$ for $f\geq0$.


 \smallskip

  $(2)$ \textit{Function spaces in $t, v$ variables.} Let $f=f(t, v)$ and $X$ be a function space in $v$ variables. Then $L^p([0,T],X)$ and $ L^\infty([0,T],X)$ are defined by
 \[ L^p([0,T],X):=\bigg\{f(t,x,v)\big|\|f\|^p_{ L^p([0,T],X)}=\int_0^T\|f(t)\|^p_{X}dt<+\infty\bigg\},\quad 1\leq p<\infty,\]
  \[L^\infty([0,T],X):=\Big\{f(t,x,v)|\|f\|_{ L^\infty([0,T],X)}=\mathrm{esssup}_{t\in [0,T]}\|f(t)\|_{X}<+\infty\Big\}. \]
  We also define space $L^p_{loc}([0,\infty),X):=\cap_{T<\infty}L^p([0,T],X)$.

 \subsubsection{Main results} Our main result is to prove the existence and uniqueness results of the equation \eqref{1} around  $H^{-\f12}$ space. We have
 \begin{thm}[Existence and uniqueness in  $H^{-\f12}$]\label{uniquethm}
	Let $f_0\in L^1_{2l+1}\cap L\log L$ be a nonnegative function.
\begin{itemize}
	\item[(1).] If $\<v\>^lf_0\in H^{-\f12,\sss}$ with $\sss\in[0,1]$ and $l\geq3$,  then there exists $T>0$ and a local solution $f=f(t,v)$ of  the equation \eqref{1} satisfying that $f\in C([0,T];H^{-\f12,\sss}_l)\cap L^2([0,T];H^{\f12,\sss}_{l-\f32})$.
    \item[(2).] The solution $f$ constructed in (1) is indeed global in $ C([0,\infty);H^{-\f12,\sss}_l)\cap L^2_{loc}([0,\infty);H^{\f12,\sss}_{l-\f32})$.

	\item[(3).]  If $\<v\>^lf_0\in H^{-\f12,\sss}$ with  $\sss>\f12$ and $l\geq5$, then the above global solution $f$  is unique.
\end{itemize}
 \end{thm}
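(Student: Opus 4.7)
The plan is to treat the three items of the theorem in order: local well-posedness by an energy estimate tailored to the weighted logarithmic Sobolev scale $H^{-\f12,\sss}_3$; global existence and $C^\infty$-smoothing by combining the Guillen--Silvestre monotonicity of the Fisher information with a weighted smoothing bootstrap; and finally uniqueness in a slightly weaker norm, where the two extra polynomial powers of weight and the condition $\sss>\f12$ are used to close critical endpoint terms.

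For the local existence in item (1), I would first regularize by mollifying $f_0$ and truncating the kernels $a,b$ away from the origin, producing smooth approximations $f^n$ for which classical theory applies, and aim for a uniform a priori bound of the form
\[
\f{d}{dt}\big\|\wei^3 f\big\|_{H^{-\f12,\sss}}^2 + c\,\big\|\wei^{\f32} f\big\|_{H^{\f12,\sss}}^2 \;\ls\; \Phi\!\left(\|f_0\|_{L^1_7\cap L\log L}\right)\big\|\wei^3 f\big\|_{H^{-\f12,\sss}}^2.
\]
Deriving this relies on (i) the coercivity of the matrix $[a_{ij}*f]$ on the weighted fractional Sobolev norm, (ii) Hardy--Littlewood--Sobolev bounds on $a*f$ and $b*f$, and (iii) commutator estimates between the multiplier $\wei^3$, the Bessel symbol $\lr{D}^{-\f12}$, and the logarithmic symbol $\log^{\sss}(2+\lr{D})$. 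The dominant difficulty is the critical quadratic piece $8\pi f^2$ coming from the identity \eqref{Qbis}: by duality it suffices to bound $\|f^2\|_{H^{-\f12,\sss}_3}$, and using $H^{\f12}\hookrightarrow L^3$ combined with a weighted fractional Leibniz rule I expect
\[
\|f^2\|_{H^{-\f12,\sss}_3} \;\ls\; \|f\|_{H^{-\f12,\sss}_3}\,\|f\|_{H^{\f12,\sss}_{\f32}},
\]
which is exactly at the critical scaling and can be absorbed into the dissipation. A standard Aubin--Lions compactness passage to the limit and the energy identity then yield the claimed regularity.

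For item (2), the local solution constructed above propagates $L^1_7\cap L\log L$, so after an arbitrarily short time the Fisher information becomes finite and, by Guillen--Silvestre, is non-increasing thereafter, which upgrades the local weak solution to a global strong one and rules out blow-up. The $C^\infty$-smoothing at positive times is then obtained by iterating a weighted smoothing scheme: each step improves regularity by half a derivative at the cost of losing $\f32$ powers of polynomial weight, with the loss supplied by the extra $\wei^{\f32}$ built into the dissipation. The key subtlety, highlighted by the authors' toy-model analysis, is that the naive Leibniz rule fails on $a*f$ and $b*f$ for data with only polynomial tail; instead I would use a Littlewood--Paley decomposition of $f$ in frequency and redistribute derivatives between $a*f$ and $f$ with carefully matched weight exponents, so that the pointwise growth predicted by \eqref{PointwToyM} is compensated by the short-time smoothing gain. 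This bootstrap is the hardest step of the whole program.

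For uniqueness in item (3), let $f,g$ be two solutions sharing the datum, set $w:=f-g$ and note that by bilinearity $\pa_t w = Q(f,w)+Q(w,g)$, whose scale-critical part is $8\pi(f+g)w$. I would run an energy estimate in the heavier-weight space $H^{-\f12,\sss}_5$ by pairing against $\wei^5 \lr{D}^{-\f12}\log^{2\sss}(2+\lr{D})\lr{D}^{-\f12}\wei^5 w$, producing
\[
\f{d}{dt}\|w\|_{H^{-\f12,\sss}_5}^2 \;\ls\; \Psi(t)\,\|w\|_{H^{-\f12,\sss}_5}^2
\]
with $\Psi\in L^1(0,T)$ controlled by $\|f\|_{L^2_t H^{\f12,\sss}_{\f32}}$, $\|g\|_{L^2_t H^{\f12,\sss}_{\f32}}$ and the polynomial moments; Gronwall then closes the argument. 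The two extra weight powers (going from $3$ in existence to $5$ here) are spent on the commutator loss produced by $b*w$ acting on $g$, while the condition $\sss>\f12$ is exactly what is needed to beat the logarithmic endpoint in the critical product $(f+g)\cdot w$. The $\log^{\sss}$ factor in the definition of $H^{-\f12,\sss}$ is included precisely to supply this missing log-gain, and this endpoint is where I expect the most delicate balancing.
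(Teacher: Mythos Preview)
Your proposal captures the broad architecture, but misses the mechanism that actually closes the critical estimates in items (1) and (3), and this is a genuine gap rather than a detail.

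For item (1), your product estimate $\|f^2\|_{H^{-\f12,\sss}_3}\ls\|f\|_{H^{-\f12,\sss}_3}\|f\|_{H^{\f12,\sss}_{\f32}}$ is at critical scaling, so when inserted into the energy inequality it yields a term of the form $C\|f\|_{H^{-\f12,\sss}_3}\|f\|_{H^{\f12,\sss}_{\f32}}^2$ on the right-hand side. To absorb this into the dissipation $\kappa\|f\|_{H^{\f12,\sss}_{\f32}}^2$ you would need $\|f\|_{H^{-\f12,\sss}_3}$ to be small, which is not assumed. The paper handles this in two distinct ways. For $\sss>0$, a high--low frequency split in the dyadic analysis produces an extra factor $M^{-\sss}$ in front of the critical piece, so that choosing the cutoff $M$ large (and then $\eps$ small) absorbs it even for large data; this is exactly where the logarithmic gain is spent, and your proposal does not mention this mechanism. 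For the endpoint $\sss=0$ there is no such gain, and the paper instead decomposes the initial datum as $f_0=(J_\delta*f_0)+(f_0-J_\delta*f_0)$: the first part launches a smooth solution $f_1$ in $H^3_3$, and the remainder $f_2=f-f_1$ is \emph{small} in $H^{-\f12}_3$, which is what allows the critical term to be absorbed in the $f_2$-energy estimate.

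For item (3) the same issue recurs more severely. Writing $\pa_t w=Q(f,w)+Q(w,g)$ and running a straight Gronwall in $H^{-\f12,\sss}_5$, the term $Q(w,g)$ with $w$ in the first slot produces, after localization, a contribution of the form $\|g\|_{H^{-\f12}_5}\|w\|_{H^{\f12,\sss}_{\f72}}^2$ multiplying the dissipation norm of $w$; since $\|g\|_{H^{-\f12}_5}$ is not small, this cannot be absorbed and your $\Psi\in L^1$ bound does not follow. The paper's key idea is again the smooth-plus-small decomposition: with $\fff_1$ the smooth solution launched from $J_\delta*f_0$ and $\ggg_2=g-\fff_1$, one has $\pa_t\hhh=Q(\fff,\hhh)+Q(\hhh,\fff_1)+Q(\hhh,\ggg_2)$. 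The piece $Q(\hhh,\fff_1)$ is harmless because $\fff_1\in H^5_5$, while $Q(\hhh,\ggg_2)$ carries the dangerous critical coefficient $\|\ggg_2\|_{H^{-\f12}_5}\ls\eps_1$, which is small by construction and can be absorbed. The extra two powers of weight and the condition $\sss>\f12$ are indeed used (the latter to sum a borderline $\p^{-\sss}$ series arising from the $Q_{-1}$ contribution), but they are not by themselves sufficient without the decomposition; your explanation of their role is incomplete.

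Your treatment of item (2) is essentially correct in spirit and matches the paper's route through Fisher information and the smoothing estimate, though the actual smoothing argument in the paper is carried out via localized $\F_j\cP_k$ energy estimates rather than a fractional Leibniz bootstrap.
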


Several remarks are in order:

\begin{itemize}
	\item[(i).]  According to the definition \eqref{hms}, $H^{-\f12,\sss}$ will coincide with $H^{-\f12}$ when $\sss=0$. This means we construct a global solution in $H^{-\f12}$ space. Moreover,  the estimate 
	$\|e^{t\triangle}f_0\|_{L^\infty}\ls t^{-1}|\log t|^{-\sss}\|f_0\|_{H^{-\f12,\sss}}$ indicates that the property, $f\in L^1([0,T];L^\infty)$, may fail when $\sss\in [0,1]$. As a result, the Fournier's conditional uniqueness result in \cite{NF} cannot be applied here. 
	\item[(ii).] We provide a new approach for the uniqueness result in $H^{-\f12,\sss}$ with  $\sss>\f12$, which basically relies on the localization techniques both in phase and frequency spaces.
	\item[(iii).] It is widely recognized that if $f$ is a solution to \eqref{1}, then for any $\alpha,\lambda>0$, the function
	  $f_{\al,\lambda}:=\lambda^\alpha f(\lambda^\alpha t, \lambda v)$  is also a solution. If we set $\lambda^\al=t^{-1}$, then the function $t^{-1}f(1,t^{-\f1\al}v)$ dose not satisfy the conservation laws of mass, momentum, and energy (refer to  \eqref{cons}). Consequently, self-similar solutions are precluded. This implies that the non-uniqueness argument developed for the semi-linear heat equation \eqref{SLHeq} with initial data in $L^p$ space($p<3/2$)  is not directly applicable to equation \eqref{1}.
	\item[(iv).] The reason for the initial value \( f_0 \in L^1_{2l+1} \) is that we can utilize Fisher information and the propagation of \( L^1 \) moments(see \eqref{0000}) to ensure that the solution remains in \( H^{-\frac{1}{2}, \sss}_l \) after a period of time, thus maintaining the global solution  within this space.

\end{itemize}


The method developed for proving uniqueness also applies to regularity estimates, which are required for global existence. We therefore give the Sobolev regularity estimates as well. The following theorem establishes smoothing estimates for initial data with polynomial tails.

\begin{thm}[Smoothing estimates with negative weights]\label{T1}
 Let  $f\in C([0,\infty);H^{\mathsf{r}}_l)\cap L^2_{loc}([0,\infty);H^{\mathsf{r}+1}_{l-\f32})$ 
		be a global solution of the equation \eqref{1} with $\mathsf{r}\in[-\f12,0]$ and $l\geq \f{9-6\mathsf{r}}{4}$, satisfying $\inf_{t\in[0,\infty)}\|f(t)\|_{L^1}>0$ and $\sup_{t\in[0,\infty)}\|f(t)\|_{L^1_2\cap L\log L}<\infty$. Then 
		for any $\nn\geq\mathsf{r}$, it holds that
		\ben\label{SMesti}\|f(t)\|_{H^\nn_{l-\f{3}{2}\nn+\f{3\mathsf{r}}{2}}(\R^3)}\ls t^{-\f{\nn}{2}+\f{\mathsf{r}}{2}}\mathbf{1}_{t\leq1}+C(t)\mathbf{1}_{t>1},\quad \forall t\in(0,\infty),\een
		where \( C(t) \) is a locally uniformly bounded function.

\end{thm}

We remark that Theorem \ref{uniquethm} confirms the existence of such a global solution \( f \). It is comparable to the smoothing estimates for the linear heat equation with $H^{\mathsf{r}}$ initial data:
\[\|e^{-t\Delta}f_0\|_{H^\nn} \ls t^{-\frac{\nn}{2}+\f{\mathsf{r}}{2}}\|f_0\|_{H^{\mathsf{r}}}.\]
Thus, \eqref{SMesti} is sharp. Furthermore, by utilizing the interpolation inequality, if $\mathsf{r}\in [-\f12,0]$,   we can infer that the solution $f$ satisfies: 
$$\|f(t)\|_{L^\infty}\ls t^{-\f{3-2\mathsf{r}}{4}}+1.$$  Thus the uniqueness of solutions with initial data in \(H^{\mathsf{r}}\) for \(\mathsf{r}>-1/2\) is therefore a direct corollary of Fournier's result in \cite{NF}.

\subsubsection{Main strategy} The challenge in  achieving uniqueness and precise smoothing estimates stems from the degenerate and non-local characteristics of the collision operator  $Q$ (see \eqref{12d}).  To surmount these challenges, we employ the localization techniques introduced  in \cite{HE,HJ,HJ2}.  In essence, these techniques allow us to establish the sought-after results by utilizing only the weak form of the equation  \eqref{Qweak1}.  As a result, our smoothing estimates are independent of the uniqueness proof and impose fewer constraints on the required regularity of the solution itself.
\smallskip

To achieve the regularity estimates, our main strategies can be summarized as follows:
\smallskip

$(i).$ \underline{Localization in   frequency space.}  Unlike directly differentiating the equation, we utilize \eqref{Qweak1} with $\varphi:=\F_j f$, where $\F_j$ is a multiplier defined in \eqref{localOPxij}. Using the Bernstein inequality (see Lemma \ref{7.8}) in $L^2$ space, the $\alpha$-th derivative of $\F_jf$ can be equivalently expressed as multiplying a constant $2^{|\alpha|j}$. When returning to the original equation \eqref{1}, we need to localize the equation in frequency space. Therefore, we have to carefully estimate the commutator between the localized operator $\F_j$ and the collision operator $Q$.
\smallskip

$(ii).$ \underline{Localization in  phase space.}  To handle the potential growth in the $v$ variable, we localize the equation in phase space using dyadic decomposition as shown in \eqref{7.2}. The crucial observation is that both the second function $h$ and the third function $f$ in the inner product $\lr{Q(g,h),f}_v$ can be localized in the same phase space regime. This is distinct from the Boltzmann collision operator. It allows us to multiply negative weights and distribute them equally to the functions $h$ and $f$. However, implementing this approach requires dealing with complicated commutators.
\smallskip





\smallskip

When addressing the question of uniqueness, we adopt the $L^2$-based energy method, which proves effective for the toy model \eqref{SLHeq} when dealing with small initial data in $H^{-1/2}$. In this scenario, it is pivotal to recognize that the dissipation still dominates the non-linearity.  Thus, we circumvent the need for the condition that $f\in L^1_{loc}([0,\infty);L^\infty(\R^3))$. In order to transpose this idea to the Landau equation,  our primary innovation  lies in the following two aspects:
\begin{itemize}
	\item  The framework employed for the smoothing estimates remains effective for establishing uniqueness. The key distinction is that the regularity of the function space we utilize is slightly higher than $H^{-\f12}$, i.e. $H^{-\f12,\sss},\sss>\f12$, necessitated by the non-local terms $a*f$ and $b*f$.
	\item  We decompose the initial data into two components: large data in a more regular space, where we already have established uniqueness and obtained better estimates for the solution, and small data in the original space, which ensures that we can leverage the crucial property that dissipation dominates the non-linearity.
\end{itemize}
We refer readers to   Section \ref{section5} for more details.

\subsection{Organization of the paper}
The rest of this paper is arranged as follows. In Section \ref{section2}, we provide the coercivity estimates, upper bound estimates of the collision operator, and its commutator estimates with the localization operators. In Section \ref{section3}, we present the local well-posedness for initial data in the space \( H^{-\frac{1}{2}, \sss} \) with \( \sss > 0 \), as well as regularity estimates for \( H^{\mathsf{r}} \) with \( \mathsf{r} \in (-\frac{1}{2}, 0] \). While the local well-posedness and regularity estimates for initial data in \( H^{-\frac{1}{2}} \) will be given in Section \ref{section4}. Section \ref{section5} is dedicated to proving the uniqueness in \( H^{-\frac{1}{2}, \sss} \) with \( \sss > \frac{1}{2} \) and the global existence of solutions. Some auxiliary lemmas will be listed in the appendix. 

\smallskip

\section{Analysis of the collision operator}\label{section2}
In this section, we establish both the upper and lower bounds of the collision operator, as well as provide estimates for certain commutators. These results may be of independent interest.

\subsection{Coercivity estimate} We begin with the following proposition:

\begin{prop}\label{coer}
	Suppose that the non-negative function $g$ satisfies $\LLO{g}>\delta, \|g\|_{L^1_2}+\|g\|_{L\log L}<\lambda$. Then
	\ben
	\|\na f\|_{L^2_{-3/2}}^2+\|(-\Delta_{\S^2})^{\f{1}{2}}f\|_{L^2_{-3/2}}^2\leq C(\delta, \lambda)\int_{\R^3} (a*g):\na f\mult\na fdv.
	\een
	Here $-\Delta_{\S^2}=\sum\limits_{1\leq i<j\leq 3}(v_i\pa_j-v_j\pa_i)^2$ is the Laplace-Beltrami operator. For any $a\in\R$ and radial function $\phi$, we have(see Lemma 5.8 in  \cite{HE})
	\ben\label{LaplaceBeltrami}
	\phi(|D|)(-\Delta_{\S^2})^{a}=(-\Delta_{\S^2})^{a}\phi(|D|),\quad \mbox{and}\quad\phi(|v|)(-\Delta_{\S^2})^{a}=(-\Delta_{\S^2})^{a}\phi(|v|).
	\een
	Thus it holds that $\|(-\Delta_{\S^2})^{\f12}f\|^2_{L^2_{-\f32}}=\|v\times\na_v f\|^2_{L^2_{-\f32}}$, which will be used in the proof below.
\end{prop}

\begin{proof} We prove it in the spirit of \cite{DV1}. We claim that there exists a constant 
	  $ C(\delta, \lambda)>0$ such that for all  $v\in\R^3, \xi\in\S^2, a_{ij}*g(v)\xi_i\xi_j\geq C(\delta, \lambda)\wei^{-3}$.  
	For fixed $\xi\in\S^2, v\in\R^3$, let $D_{\theta, \xi}(v):=\{v_*\in\R^3:\big|\f{v-v_*}{|v-v_*|}\cdot\xi\big|>\cos\theta\}$, then we have  
	\[
	\ba
	a_{ij}*g(v)\xi_i\xi_j&\geq\int_{D_{\theta, \xi}(v)^c}|v-v_*|^{-1}\Pi(v-v_*):\xi\mult\xi\ g_*dv_*\\
	&\geq \sin^2\theta\int_{D_{\theta, \xi}(v)^c}|v-v_*|^{-1}g_*dv_*
	\gs\wei^{-1}\sin^2\theta\int_{D_{\theta, \xi}(v)^c}\<v_*\>^{-1}g_*dv_*,
	\ea\] while 
	\[\ba
	&\mathcal{I}:=\sin^2\theta\int_{D_{\theta, \xi}(v)^c}\<v_*\>^{-1}g_*dv_* \geq\f{\sin^2\theta}{\<R\>}\int_{D_{\theta, \xi}(v)^c\cap\{|v_*|\leq R\}}g_*dv_*=\f{\sin^2\theta}{\<R\>}\bigg(\LLO{g}-\int_{|v_*|\geq R}g_*dv_*  \\&-\int_{D_{\theta, \xi}(v)\cap\{|v_*|\leq R\}}g_*dv_*\bigg)\geq\f{\sin^2\theta}{\<R\>}\rr{\delta-\f{\lambda}{R^2}-M\left|D_{\theta, \xi}(v)\cap\{|v_*|\leq R\}\right|-\f{\lambda}{\log M}}.
	\ea\]
	Since $\left|D_{\theta, \xi}(v)\cap\{|v_*|\leq R\}\right|\leq 2\pi R(|v|+R)^2\tan^2\theta$, if we set $R^2:=4\lambda/\delta, M=\exp\{4\lambda/\delta\}$, then
	\[\mathcal{I}\geq C_1(\delta, \lambda)\sin^2\theta\rr{\delta-C_2(\delta,\lambda)(|v|+R)^2\tan^2\theta}.\]
	When $|v|\geq R$, we set $\tan^2\theta:=\f{\delta}{8C_2(\delta,\lambda)|v|^2}$, which implies that $\mathcal{I}\geq C(\delta, \lambda)|v|^{-2}$. For $|v|\leq R$,  we set $\tan^2\theta:=\f{\delta}{8C_2(\delta, \lambda)R^2}$, which implies that  $\mathcal{I}\geq C(\delta, \lambda)$. This ends the proof of the claim.
	
	Thanks to the above claim, we easily get that
	\ben\label{coer1}
	\|\na f\|_{L^2_{-3/2}}^2\leq_{\delta, \lambda}\int_{\R^3} (a*g):\na f\mult\na fdv.
	\een
	  Next we want to show that 
	 $\|(-\Delta_{\S^2})^{\f{1}{2}}f\|_{L^2_{-3/2}}^2\leq_{\delta, \lambda}\int (a*g):\na f\mult\na fdv$. 
	By direct computation, one has
	\[\ba
	&\int_{\R^3}(a*g):\na f\mult\na fdv=\iint_{\R^6} \f{|(v-v_*)\times \na f|^2}{|v-v_*|^{3}}g_*dv_*dv\geq \iint_{\substack{|v_*|\leq \f{|v|}{2}\\ |v|\geq R}} \f{|(v-v_*)\times \na f|^2}{|v-v_*|^3}g_*dv_*dv \\
	&\gs \iint_{\substack{|v_*|\leq \f{|v|}{2}\\ |v|\geq R}}|v|^{-3}|v\times\na f|^2g_*dv_*dv-\iint_{\substack{|v_*|\leq \f{|v|}{2}\\ |v|\geq R}}|v|^{-3}|v_*\times\na f|^2g_*dv_*dv.
    \ea\]
	For the first term,  since
	\[\int_{\R^3} g_*\vv{1}_{\substack{|v_*|\leq \f{|v|}{2}\\ |v|\geq R}}dv_*\geq \int_{\R^3} g_*\vv{1}_{|v_*|\leq\f{R}{2}}dv_*\geq \int_{\R^3} g_*dv_*-\f{4}{R^2}\int_{\R^3} g_*|v_*|^2dv_*\geq \delta-\f{4\lambda}{R^2},\]
	by choosing $R$ sufficiently large, we have
	\[\iint_{\substack{|v_*|\leq \f{|v|}{2}\\ |v|\geq R}}|v|^{-3}|v\times\na f|^2g_*dv_*dv\gs C(\delta, \lambda)\int_{|v|\geq R} |v|^{-3}|v\times \na f|^2dv.\]
	Thanks to the facts that 
	 $\int_{|v|\leq R} \wei^{-3}|v\times \na f|^2dv\ls_R \|\na f\|_{L^2_{-3/2}}^2$ 
	and
	 $\iint_{\substack{|v_*|\leq \f{|v|}{2}\\ |v|\geq R}}|v|^{-3}|v_*\times\na f|^2g_*dv_*dv\ls  \|g\|_{L^1_2}\|\na f\|_{L^2_{-3/2}}^2$, 
	we are led to that
	\[\|(-\Delta_{\S^2})^{\f{1}{2}}f\|_{L^2_{-3/2}}^2\ls_{\delta, \lambda}\|\na f\|_{L^2_{-3/2}}^2+\int_{\R^3} (a*g):\na f\mult\na fdv\leq_{\delta, \lambda}\int_{\R^3} (a*g):\na f\mult\na fdv.\] This, together with \eqref{coer1} completes the proof of this proposition.
\end{proof}

\subsection{Dyadic decomposition and its application to   the collision operator}\label{DDS}
Let $B_{\frac{4}{3}}:=\{\xi\in\R^3||\xi|\leq\frac{4}{3}\}$ and $C:=\{\xi\in\R^3||\frac{3}{4}\leq|\xi|\leq\frac{8}{3}\}$. Then one may introduce two radial functions $\psi\in C_0^\infty(B_{\frac{4}{3}})$ and $\vphi\in C_0^\infty(C)$ which satisfy
\ben
\label{7.1}\psi,\vphi\geq0,~~&and&~~\psi(\xi)+\sum_{j\geq0}\vphi(2^{-j}\xi)=1,~\xi\in\R^3,\\
\notag|j-k|\geq2&\Rightarrow& \rm{Supp}~\vphi(2^{-j}\cdot)\cap \rm{Supp}~\vphi(2^{-k}\cdot)=\emptyset,\\
\notag j\geq1&\Rightarrow& \rm{Supp}~\psi\cap \rm{Supp}~\vphi(2^{-j}\cdot)=\emptyset.
\een
Moreover, let us  assume that there exists a constant $c>0$ such that $\vf(x)\equiv 1$ for $\f{3}{2}-c\leq |x|\leq \f{3}{2}+c$.

\subsubsection{Dyadic decomposition in the phase space} The dyadic operator in the phase space $\cP_j$ can be defined as follows:
\ben
\label{7.2}\cP_{-1}f(x):=\psi(x)f(x),~\cP_jf(x):=\vphi(2^{-j}x)f(x),~j\geq0.
\een
By the definition, there exists a integer $N_0\geq2$ such that  $\cP_j\cP_k=0$ if $|j-k|\geq N_0$. In applications, we also introduce 
$\tP_lf(x):=\sum\limits_{|k-l|\leq N_0}\cP_kf(x)$ and $\U_jf(x):=\sum\limits_{k\leq j}\cP_kf(x)$.
  For any smooth function $f$, we have  \[f=\cP_{-1}f+\sum\limits_{j\geq0}\cP_jf.\]

\subsubsection{Dyadic decomposition in the frequency space}  We denote $\tm:=\cF^{-1}\psi$ and $\tphi:=\mathcal{F}^{-1}\vphi$ where they are the inverse Fourier transform of $\vphi$ and $\psi$. If we set $\tphi_j(x)=2^{3j}\tphi(2^jx)$, then the dyadic operator in the frequency space $\F_j$ can be defined as follows
\ben\label{localOPxij}
	\F_{-1}f(x):=\int_{\R^3}\tm(x-y)f(y)dy,~\F_jf(x):=\int_{\R^3}\tphi_j(x-y)f(y)dy,~j\geq0.
\een 
We also introduce $\tF_jf(x)=\sum_{|k-j|\leq 3N_0}\F_kf(x)$ and $\tS_jf(x)=\sum_{k\leq j}\F_kf$. Then for any $f\in \mathscr{S}'$, it holds that
 \[f=\F_{-1}f+\sum_{j\geq0}\F_jf.\]
 
We remark that the basic properties of the above dyadic operators, such as Bernstein's inequality, the characterization of weighted Sobolev spaces, and the estimates for their mutual commutators, are all given in the Appendix.

\subsubsection{Applications to the collision operator}  Recall that $a(z)$ defined in \eqref{13d}. By introducing
\ben\label{2.4} a_k(z)=\left\{\ba
	&a(z)\vf(2^{-k}z),\quad&\mbox{if}\quad k\geq 0;\\
	&a(z)\psi(z),\quad&\mbox{if}\quad k=-1,
\ea\right.
\een
we set $b_k(z):=\na\cdot a_k(z), \quad c_k(z):=\na\cdot b_k(z),\quad Q_k(g,h):=\na\cdot([a_k*g]\na h-[a_k*\na g]h)$. Then we have
\ben
\br{Q_k(g,h),f}&=&\int_{\R^3} (a_k*g):\na^2 hfdv-\int_{\R^3}(c_k*g)hfdv\notag\\
&=&\int_{\R^3}(a_k*g):h\na^2 fdv+2\int_{\R^3}(b_k*g)\cdot h\na f dv.
\een
On one hand, in phase space, applying the   dyadic decomposition to the operator, we deduce that
\ben\label{phasedecom}
\br{Q(g,h),f}=\sum_{k,\l\geq -1}\br{Q_k(\cP_{\l}g,h),f}=\sum_{k\geq N_0-1}\br{Q_k(\U_{k-N_0}g,\tP_{k}h),\tP_{k}f}\notag\\+\sum_{\l\geq k+ N_0}\br{Q_k(\cP_{\l}g,\tP_{\l}h),\tP_{\l}f}+\sum_{\m\leq k+2N_0}\br{Q_k(\tP_{k}g,\cP_{\m}h),\tP_{\m}f}.\label{Qdecom}
\een
On the other hand, owing to the  Bobylev's equality
 \beno
 \br{Q_k(g,h),f}=\iint_{\R^6} \widehat{a_k}(\xi-\eta):\xi\mult(\xi-2\eta)\hat{g}(\xi-\eta)\hat{h}(\eta)\ov{\hat{f}(\xi)}d\eta d\xi,
 \eeno
we infer that
\ben\label{Qkdecom}
&&\br{Q_k(g,h),f}=\sum_{\p,\l\geq-1}\br{Q_k(\F_{\p}g,\F_{\l}h),f}\notag=\sum_{\l\leq \p-N_0}\br{Q_k(\F_{\p}g,\F_{\l}h),\tF_{\p}f}\\
&&\notag+\sum_{ \l\geq-1}\br{Q_k(\tS_{\l-N_0}g,\F_{\l}h),\tF_{\l}f}+\sum_{\p\geq-1}\br{Q_k(\F_{\p}g,\tF_{\p}h),\tF_{\p}f}+\sum_{\m\leq \p-N_0}\br{Q_k(\F_{\p}g,\tF_{\p}h),\F_{\m}f}\notag\\
&&:=\notag\sum_{\l\leq \p-N_0}\fM^1_{k,\p,\l}(g,h,f)+\sum_{ \l\geq-1}\fM^2_{k,\l}(g,h,f)+\sum_{\p\geq-1}\fM^3_{k,\p}(g,h,f)+\sum_{\m\leq \p-N_0}\fM^4_{k,\p,\m}(g,h,f).\\
&&
\een
For more precise, we actually have the further expressions for $\fM^i(i=1,2,3,4)$ as follows:
\ben
 &&\notag\fM^1_{k,\p,\l}(g,h,f) = \Lambda_1(\tF_\p a_k, \tF_\p c_k,\F_{\p}g,\F_{\l}h, \tF_{\p}f);  \quad
 \fM^2_{k,\l}(g,h,f)=\Lambda_1(a_k,  c_k,\tS_{\l-N_0}g,\F_{\l}h, \tF_{\l}f); \\
 &&\label{fm12}  \\ 
 &&\notag\fM^3_{k,\p}(g,h,f) = \Lambda_1(\tF_\p a_k, \tF_\p c_k,\F_{\p}g,\tF_{\p}h, \tF_{\p}f); \quad
 \fM^4_{k,\p,\l}(g,h,f)=\Lambda_2(\tF_\p a_k, \tF_\p b_k,\F_{\p}g,\tF_{p}h, \tF_{m}f),\\
 &&\label{fm34} 
\een
where we denote that
 $\Lambda_1(a,c,g,h,f):= \int_{\R^3} (a*g):\na^2   h  fdv-\int_{\R^3}( c* g) h fdv$ and  $\Lambda_2(a,b,g,h,f):= \int_{\R^3} (a*g): h  \na^2  fdv+2\int_{\R^3}( b* g)\cdot h fdv$.

\subsection{Upper bound of the collision operator}\label{UPS} Now, we consider the upper bound of the collision operator. Before that,  recalling the definitions of symbol $S^m_{1,0}$ and pseudo-differential operator:
\begin{defi}\label{de2.1}
	A smooth function $a(v,\xi)$ is said to be a symbol of type  $S^m_{1,0}$ if $a(v,\xi)$ verifies for any multi-indices $\alpha$ and $\beta$,
	\beno
	|(\partial_\xi^\alpha\pa_v^\beta a)(v,\xi)|\leq C_{\alpha,\beta}\<\xi\>^{m-|\al|},
	\eeno
	where $C_{\alpha,\beta}$ is a constant depending only on $\alpha$ and $\beta$. $a(x,D)$ is called a pseudo-differential operator with the symbol $a(x,\xi)$ if it is defined by
	\beno
	(a(x,D)f)(x):=\frac{1}{(2\pi)^3}\int_{\R^3}\int_{\R^3}e^{i(x-y)\xi}a(x,\xi)f(y)dyd\xi.
	\eeno
\end{defi}

\begin{defi}\label{Fj} Let  $\alpha=(\alpha_1,\alpha_2,\alpha_3),|\alpha|:=\alpha_1+\al_2+\al_3$ and $\vphi_\alpha:=(\frac{1}{i}\pa_{x_1})^{\alpha_1}(\frac{1}{i}\pa_{x_2})^{\alpha_2}(\frac{1}{i}\pa_{x_3})^{\alpha_3}\vphi$. To simplify the presentation of the estimates for the commutator $[\cP_k, \F_j]$,  we introduce  $\cP_{j,\alpha}$, $\F_{j,\alpha}$ and $\hat{\F}_{j,\al}$ defined by
	\beno
	&&\cP_{-1,\alpha}f:=\psi_{\alpha}f,\quad\quad\quad\quad \cP_{j,\alpha}f:=\vphi_\alpha(2^{-j}\cdot)f,\quad j\geq0;\\
	&&\F_{-1,\alpha}f:=\psi_{\alpha}(D)f,\quad\quad\quad \F_{j,\alpha}:=\vphi_\alpha(2^{-j}D)f,\quad j\geq0;\\
	&&\hat{\F}_{-1,\alpha}f:=(\psi^2)_{\alpha}(D)f,\quad\quad \hat{\F}_{j,\alpha}:=(\vphi^2)_\alpha(2^{-j}D)f,\quad j\geq0.
	\eeno
	Similar to $\tP_j$   and $\tF_j$, we can also  introduce
	$$\tP_{\l,\alpha}:=\sum_{|k-\l|<N_0}\cP_{k,\alpha},\quad \U_{j,\alpha}:=\sum_{k\leq j}\cP_{k,\alpha}, \quad \tF_{j,\alpha}:=\sum_{|k-j|<3N_0}\F_{k,\alpha}.$$
	
	To unify  notations $\tF_j, \tF_{j,\alpha}$, $\hat{\F}_{j,\alpha}$ and $\tP_\l, \tP_{\l,\alpha}$, we introduce  localized operators $\mF_j$   and $\mP_j$ such that
	\smallskip
	
	\noindent {\rm (i)} The support of the Fourier transform of $\mF_jf$ and the support of $\mP_jf$  will be localized in the annulus $\{|\cdot|\sim 2^j\},j\geq0$ or in the sphere $\{|\cdot|\ls 1\},j=-1$;\smallskip
	
	\noindent{\rm (ii)} It holds that for   fixed $N\in\N$, $|\tF_j f|_{L^2}+\sum\limits_{|\alpha|\le N} (|\tF_{j,\alpha}f|_{L^2}+|\hat{\F}_{j,\alpha}f|_{L^2})\le C_N|\mF_jf|_{L^2}$ and $|\tP_j f|_{L^2}+\sum\limits_{|\alpha|\le N} |\tP_{j,\alpha}f|_{L^2}\le C_N|\mP_jf|_{L^2}$.
\end{defi}

For the upper bound of collision operator, we have the following proposition:
\begin{prop}\label{UpperboundofQ}
	\begin{itemize}
		\item[(i).] Let $a_2,b_2\in[0,1],\omega_i\in\R,i=1\cdots,4,\sss>\f12$ verifying that $a_2+b_2=1, \omega_1+\omega_2=-3, \omega_3+\omega_4=-2$. Then
		\ben\label{upperan}
		\nr{\br{Q(g,h),f}}&\ls_{\sss}&\rr{\|g\|_{L^1_3}+\|g\|_{H^{-\f{1}{2},\sss}_{3}}}\rr{\|\rr{-\Delta_{\S^2}}^{\f{1}{2}}h\|_{L^2_{\omega_1}}+\|h\|_{H^1_{\omega_1}}}\rr{\|\rr{-\Delta_{\S^2}}^{\f{1}{2}}f\|_{L^2_{\omega_2}}+\|f\|_{H^1_{\omega_2}}}\notag\\&&+\LLO{g}\|h\|_{H^{a_2}_{\omega_3}}\|f\|_{H^{b_2}_{\omega_4}}.\een 
	    \item[(ii).] Let $a_1,b_1\in[\f12,\f32], \omega_1,\omega_2\in\R,\sss>\f12$ verifying that $a_1+b_1=2, \omega_1+\omega_2=-1$. Then
	    \ben\label{uppersob}
	    \nr{\br{Q(g,h),f}}\ls_{\sss}\rr{\|g\|_{L^1_1}+\|g\|_{H^{-\f{1}{2},\sss}_{1}}}\|h\|_{H^{a_1}_{\omega_1}}\|f\|_{H^{b_1}_{\omega_2}}.\een
	\end{itemize}

\end{prop}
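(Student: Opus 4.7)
The plan is to combine the phase-space decomposition \eqref{phasedecom} with the frequency-space decomposition \eqref{Qkdecom} applied to each $Q_k$, and then estimate each resulting piece via Bobylev's identity using the precise structure of $\widehat{a_k}$. Since $a_k(z)$ is supported in $|z|\sim 2^k$ (or $|z|\lesssim 1$ when $k=-1$) and $a(z)=|z|^{-1}\Pi(z)$ is a projection perpendicular to $z$ times a $|z|^{-1}$ factor, a direct computation yields $|\widehat{a_k}(\xi)|\lesssim 2^{2k}\lr{2^k\xi}^{-N}$ for any $N$, with analogous bounds (losing $2^{-k}$ and $2^{-2k}$) for $\widehat{b_k}$ and $\widehat{c_k}$. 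Moreover, the projection structure produces a sine-squared factor in the contraction $\widehat{a_k}(\xi-\eta):\xi\otimes(\xi-2\eta)$ when $\xi$ is close to parallel with $\xi-\eta$, which is precisely what reconstructs the angular derivative $(-\Delta_{\mathbb{S}^2})^{1/2}$ after integration.

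I would proceed as follows. First, write $\br{Q(g,h),f}=\sum_{k\ge-1}\br{Q_k(g,h),f}$ and apply \eqref{phasedecom} to distinguish three regimes according to whether $g$ or the pair $(h,f)$ dominates the phase-space scale; in each regime the geometry forces $h$ and $f$ to live in the same dyadic annulus, so the weights $\omega_1+\omega_2=-3$ (resp.\ $\omega_3+\omega_4=-2$, $\omega_1+\omega_2=-1$) may be distributed freely between them. For each fixed phase-space piece, apply \eqref{Qkdecom} to split further into the four terms $\fM^1,\ldots,\fM^4$ from \eqref{fm12}--\eqref{fm34}. In the non-resonant pieces $\fM^1,\fM^2,\fM^4$, where one frequency strictly dominates, I would move derivatives onto the smoothest factor and pick up Bernstein-type gains $2^{-|k-\l|}$ or $2^{-|k-\m|}$ that render the double sum convergent and only produce the isotropic Sobolev bound, giving the $\|h\|_{H^{a_j}_{\omega}}\|f\|_{H^{b_j}_{\omega}}$ contributions with $a_j+b_j\in\{1,2\}$. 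In the diagonal piece $\fM^3$ I would separate the anisotropic part of $\widehat{a_k}(\xi-\eta):\xi\otimes(\xi-2\eta)$ from its isotropic remainder; the former contributes the $(-\Delta_{\S^2})^{1/2}$-norms in (i), while the latter is absorbed into the $H^1$ or $H^{a_1}$ factors. Throughout, $g$ is placed either in $L^1$ (when we can afford to lose a full weight $\wei^3$ or $\wei^1$) or in a weighted $H^{-1/2,\sss}$ norm at the critical level.

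The main obstacle is the critical summation giving rise to the logarithmic correction. After the double dyadic sum over $k$ and over the frequency index $\p$ of $\F_\p g$, the $\fM^3$-type contributions reduce to a series of the shape $\sum_{\p\ge-1}\|\F_\p g\|_{L^2_3}$ (or $L^2_1$ in case (ii)), which fails to converge in $H^{-1/2}$; inserting the weight $\log^{\sss}(2+\lr{D})$ with $\sss>1/2$ and invoking Cauchy--Schwarz against $\sum_{\p}\p^{-2\sss}<\infty$ closes it. Technically one must also track how the velocity weights $\wei^3$ (or $\wei^1$) on $g$ commute through the phase localizations $\tP_\l$ and $\cP_\m$ appearing in \eqref{phasedecom}; each commutator costs $2^{-\l}$ or $2^{-\m}$, which is absorbed by the geometric decay in $k,\l,\m$. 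The remaining computation is bookkeeping: matching the total derivative count ($a_1+b_1=2$ or one derivative on $a_k$ plus $a_2+b_2=1$), the total weight count ($\omega_1+\omega_2=-3$, $\omega_3+\omega_4=-2$, or $\omega_1+\omega_2=-1$), and the scaling of $\widehat{a_k}$.
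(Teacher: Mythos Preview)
Your overall architecture---phase decomposition \eqref{phasedecom}, then frequency decomposition \eqref{Qkdecom}, then bookkeeping of weights and derivative counts---matches the paper's approach. But there is a genuine gap in your treatment of the $k=-1$ piece, and this is precisely where the $H^{-\frac12,\sss}$ norm enters.

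Your claimed bound $|\widehat{a_k}(\xi)|\lesssim 2^{2k}\lr{2^k\xi}^{-N}$ for \emph{any} $N$ is correct for $k\ge 0$ (since $a_0\in\mathscr{S}$ after rescaling), but fails for $k=-1$: the kernel $a_{-1}(z)=a(z)\psi(z)$ retains the $|z|^{-1}$ singularity at the origin, so $\widehat{a_{-1}}(\xi)$ only decays like $\lr{\xi}^{-2}$, not rapidly. Consequently your argument for $\fM^1,\fM^4$ (``arbitrary Bernstein gains in the off-diagonal pieces'') breaks down at $k=-1$, and it is exactly in the analysis of $Q_{-1}$ that the critical sums such as $\sum_{\p}2^{-\p/2}\|\F_\p g\|_{L^2}\cdot(\dots)$ appear, forcing the $\log^{\sss}$ weight with $\sss>\tfrac12$. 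The paper handles $Q_{-1}$ in a separate step using the sharp bound $\|\F_\p a_{-1}\|_{L^2}\lesssim 2^{-\p/2}$ and Hardy--Littlewood--Sobolev, and this is where the restrictions $a_1,b_1\in[\tfrac12,\tfrac32]$ in (ii) originate. You should isolate $k=-1$ explicitly.

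A secondary point: the anisotropic $(-\Delta_{\S^2})^{1/2}$ norms do not arise from a symbol-level splitting inside $\fM^3$ as you suggest, but rather from writing $\fM^2$ (the piece where $g$ is low-frequency and $h,f$ are at comparable frequencies) back in physical space and using the identity $(a_k*g):\nabla h\otimes\nabla f = |v-v_*|^{-3}\vf(2^{-k}(v-v_*))\,((v-v_*)\times\nabla h)\cdot((v-v_*)\times\nabla f)$. This is what produces the $2^{-3k}$ gain and the cross-product structure. Your classification of $\fM^2$ as ``non-resonant'' is misleading: it is diagonal in the $(h,f)$ frequencies and is the main carrier of the anisotropic estimate.
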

\begin{proof} Due to the decomposition \eqref{Qdecom} and \eqref{Qkdecom}, we split it into two steps.

\underline{Step 1: Preliminary estimates of $\br{Q_k(g,h),f}$ with $k\geq 0$.}  Thanks to \eqref{2.4}, one may have 
  $a_0\in \mathscr{S}$ and 
\[\ba 
&a_k(z)=2^{-k}a_0(2^{-k}z),\quad b_k(z)=2^{-2k}b_0(2^{-k}z),\quad c_k(z)=2^{-3k}c_0(2^{-k}z),\\
&\widehat{a_k}(\xi)=2^{2k}\widehat{a_0}(2^k\xi),\quad \F_p a_k(z)=2^{-k}(\F_{p+k}a_0)(2^{-k}z),\quad \mbox{for}\ p\geq0.
\ea\] Then for any $N\in\N$, it holds that
\ben\label{Fpa}
&& \notag\LLF{a_k}\ls 2^{-k},\quad \LLF{\F_pa_k}\leq 2^{-k}\LLF{\F_{p+k}a_0}\ls_{N}2^{-k}2^{-pN}2^{-kN};\\
&& \notag\LLF{b_k}\ls 2^{-2k},\quad \LLF{\F_pb_k}\leq 2^{-2k}\LLF{\F_{p+k}a_0}\ls_{N}2^{-k}2^{-pN}2^{-kN};\\
 &&\|c_k\|_{L^\infty}\ls 2^{-3k},\quad \LLF{\F_pc_k}\leq 2^{-3k}\LLF{\F_{p+k}c_0}\ls_{N}2^{-3k}2^{-pN}2^{-kN},\quad \mbox{for}\ p\geq 0.
 \een

\noindent$\bullet$ {\it Estimate of $\fM^1$.} Thanks to \eqref{fm12}, we first have
\beno
\fM^1_{k,\p,\l}(g,h,f) =\int ((\tF_\p a_k)*(\F_\p g)):\na^2  ( \F_{\l}h ) \tF_\p fdv-\int( (\tF_\p c_k)* (\F_\p g)) \F_\l h \tF_\p fdv.
\eeno
Thus by Cauchy inequality, Young's inequality and \eqref{Fpa}, together with the fact that $\l\leq \p-N_0$,  we can derive that
\begin{equation}\label{M1}
	\begin{aligned}
&\nr{\fM^1_{k,\p,\l}(g,h,f)} 
\leq\|\tF_\p a_k\|_{L^\infty}\|\F_\p g\|_{L^1}\|\na^2 \F_\l h\|_{L^2}\|\tF_\p f\|_{L^2}
+\|\tF_\p c_k\|_{L^\infty}\|\F_\p g\|_{L^1}\| \F_\l h\|_{L^2}\|\tF_\p f\|_{L^2}\\
\ls_N& 2^{-2kN}2^{-2\p N}\|\F_\p g\|_{L^1}\| \F_\l h\|_{L^2}\|\tF_\p f\|_{L^2}2^{2\l} 
 \ls_N  2^{-kN}2^{-\p N}2^{-\l N}\| g\|_{L^1}\| \F_\l h\|_{L^2}\|\tF_\p f\|_{L^2}.
\end{aligned}
\end{equation}
Thus we can obtain that
\ben\label{sumM1}
\notag\sum_{\l\leq \p-N_0}|\fM^1_{k,\p,\l}(g,h,f)|&\ls_N&\sum_{\l\leq \p-N_0}2^{-kN}2^{-\p N}2^{-\l N}\| g\|_{L^1}\| \F_\l h\|_{L^2}\|\tF_\p f\|_{L^2}\\
&\ls_N&2^{-kN}\|g\|_{L^1}\|h\|_{L^2}\|f\|_{L^2}.
\een

 \noindent$\bullet$ {\it Estimate of $\fM^4$.} By the similar argument as $\fM^1$, we can get that 
 \ben\label{M4} \nr{\fM^4_{k,\p,\m}(g,h,f)} 
\ls_N2^{-kN}2^{-\p N}2^{-\m N}\|g\|_{L^1}\| \tF_\p h\|_{L^2}\|\F_\m f\|_{L^2}.\een
Moreover, we have 
\ben\label{sumM4}
\sum_{\m\leq \p-N_0}|\fM^4_{k,\p,\m}(g,h,f)|\ls_N 2^{-kN}\|g\|_{L^1}\|h\|_{L^2}\|f\|_{L^2}.
\een

 \noindent$\bullet$ {\it Estimates of $\fM^2$ and $\fM^3$.}  From \eqref{fm12} and \eqref{Fpa}, we can deduce that
\ben\label{M2}|\fM^2_{k,\l}(g,h,f)| 
\ls 2^{-k}\|g\|_{L^1}\|\F_\l h\|_{L^2}\|\tF_\l f\|_{L^2}2^{2\l}.\een
Similarly, for $\fM^3$, one has
\ben\label{M3}
\nr{\fM^3_{k,\p}(g,h,f)} 
&\ls_{N}&\left\{\ba
 &2^{-k}\|g\|_{L^1}\|\tF_{-1}h\|_{L^2}\|\tF_{-1}f\|_{L^2}\quad \p=-1\\
 &2^{-kN}2^{-\p N}\|g\|_{L^1}\|\tF_\p h\|_{L^2}\|\tF_\p f\|_{L^2}\quad \p\geq0
 \ea\right.\\&\ls_{N}&2^{-k}\|g\|_{L^1}\|\tF_\p h\|_{L^2}\|\tF_\p f\|_{L^2}2^{-\p N}.\notag
 \een
 Thus combining \eqref{M2} and \eqref{M3}, we have
 \ben\label{sumM2M3}
 \sum_{ \l\geq-1}|\fM^2_{k,\l}(g,h,f)|+\sum_{\p\geq-1}|\fM^3_{k,\p}(g,h,f)|\ls 2^{-k}\|g\|_{L^1}\|h\|_{H^{a_1}}\|f\|_{H^{b_1}}
 \een
 with $a_1,b_1\in[0,2]$ and $a_1+b_1=2$.

 We can also estimate $\fM^2$ and $\fM^3$ in anisotropic spaces. For $\fM^2$,  we observe that 
\beno
\fM^2_{k,\l}(g,h,f)=-\int(a_k*\tS_{\l-N_0}g):\na \F_\l h\mult\na\tF_\l fdv+\int(b_k*\tS_{\l-N_0}g)\cdot\F_\l h\na\tF_\l fdv.
\eeno
Then one can infer that 
\[\ba
&\nr{\int(a_k*\tS_{\l-N_0}g):\na \F_\l h\mult\na\tF_\l fdv}\\
=&\nr{\iint|v-v_*|^{-3}\vf(2^{-k}(v-v_*))\rr{\tS_{\l-N_0}g}_* \rr{\rr{v-v_*}\times \na \F_\l h}\cdot\rr{\rr{v-v_*}\times \na \tF_\l f}dv_*dv}\\
\ls&2^{-3k}\rr{\iint\nr{\rr{\tS_{\l-N_0}g}_*}\nr{\rr{v-v_*}\times \na \F_\l h}^2dv_*dv}^{1/2}\rr{\iint\nr{\rr{\tS_{\l-N_0}g}_*}\nr{\rr{v-v_*}\times \na \tF_\l f}^2dv_*dv}^{1/2}\\
\ls&2^{-3k}\|g\|_{L^1_2}\rr{\LLT{\rr{-\Delta_{\S^2}}^{\f{1}{2}}\F_\l h}+\LLT{\F_\l h}2^\l}\rr{\LLT{\rr{-\Delta_{\S^2}}^{\f{1}{2}}\tF_\l f}+\LLT{\tF_\l f}2^\l}
\ea\]
and 
\beno
 \nr{\int(b_k*\tS_{\l-N_0}g)\cdot\F_\l h\na\tF_\l fdv}\ls 2^{-2k}\LLO{g}\LLT{\F_\l h}\LLT{\tF_\l f}2^\l.
 \eeno 
These imply that
\ben\label{aniso2}
\sum_{\l\geq-1}|\fM^2_{k,\l}(g,h,f)|\ls \sum_{\l\geq-1} 2^{-3k}\|g\|_{L^1_2}\rr{\LLT{\rr{-\Delta_{\S^2}}^{\f{1}{2}}\F_\l h}+\LLT{\F_\l h}2^\l}
\notag\\\times\rr{\LLT{\rr{-\Delta_{\S^2}}^{\f{1}{2}}\tF_\l f}+\LLT{\tF_\l f}2^\l}
+2^{-2k}\LLO{g}\LLT{\F_\l h}\LLT{\tF_\l f}2^\l\notag\\
\ls2^{-3k}\|g\|_{L^1_2}\rr{\LLT{\rr{-\Delta_{\S^2}}^{\f{1}{2}}h}+\|h\|_{H^1}}\rr{\LLT{\rr{-\Delta_{\S^2}}^{\f{1}{2}}f}+\|f\|_{H^1}}.
\een
Similarly, for $\p=-1$, we can get
\ben\label{aniso3}
&&\nr{\fM^3_{k,-1}(g,h,f)}\ls 2^{-3k}\|g\|_{L^1_2}\rr{\LLT{\rr{-\Delta_{\S^2}}^{\f{1}{2}}\tF_{-1}h}+\LLT{\tF_{-1}h}}\notag\\ &&\times
\rr{\LLT{\rr{-\Delta_{\S^2}}^{\f{1}{2}}\tF_{-1}f}+\LLT{\tF_{-1}f}}
+2^{-2k}\LLO{g}\LLT{\tF_{-1}h}\LLT{\tF_{-1}f}.
\een

Now patching together \eqref{sumM1}\eqref{sumM4} and \eqref{sumM2M3}, we can get
\ben\label{upperksob}
\nr{\br{Q_k(g,h),f}}\ls 2^{-k}\LLO{g}\|h\|_{H^{a_1}}\|f\|_{H^{b_1}}\een
for $a_1,b_1\in[0,2]$ and $a_1+b_1=2$. 

On the other hand, combining \eqref{sumM1}\eqref{sumM4}\eqref{aniso2} \eqref{aniso3} and \eqref{M3}($\p\geq0$), we have 
\ben\label{upperkanis}
\nr{\br{Q_k(g,h),f}}&\ls& 2^{-3k}\|g\|_{L^1_2}\rr{\LLT{\rr{-\Delta_{\S^2}}^{\f{1}{2}}h}+\|h\|_{H^1}}\rr{\LLT{\rr{-\Delta_{\S^2}}^{\f{1}{2}}f}+\|f\|_{H^1}}\notag\\&&+2^{-2k}\LLO{g}\|h\|_{H^{a_2}}\|f\|_{H^{b_2}}\een
for $a_2,b_2\in[0,1]$ and $a_2+b_2=1$.

\underline{Step 2:  Estimate of $\sum_{k\geq 0}\br{Q_k(g,h),f}$ involving phase decomposition.}  
Thanks to \eqref{Qdecom}, one has
\ben\label{involweight}
\ba
\sum_{k\geq 0}&\br{Q_k(g,h),f}=\sum_{k\geq N_0-1}\br{Q_k(\U_{k-N_0}g,\tP_{k}h),\tP_{k}f}\\
&+\sum_{\l\geq k+ N_0}\br{Q_k(\cP_{\l}g,\tP_{\l}h),\tP_{\l}f}+\sum_{\m\leq k+2N_0}\br{Q_k(\tP_{k}g,\cP_{\m}h),\tP_{\m}f}.\ea
\een

Combining the first term on the right hand side of \eqref{involweight} and \eqref{upperksob}, and using Cauchy inequality and Lemma \ref{profileofHSk}\eqref{Ber}, we can obtain that
\beno
&&\sum_{k\geq-1}|{\br{Q_k(\U_{k-N_0}g,\tP_{k}h),\tP_{k}f}}|\ls \sum_{k\geq-1}2^{-k}\|\U_{k-N_0}g\|_{L^1}\|\tP_kh\|_{H^{a_1}}\|\tP_kf\|_{H^{b_1}}\\
&\ls&\|g\|_{L^1}\|h\|_{H^{a_1}_{\omega_1}}\|f\|_{H^{b_1}_{\omega_2}},\quad \mbox{with}\quad a_1+b_1=2,\omega_1+\omega_2=-1.
\eeno
Similarly, for the second and the third term on the right hand side of \eqref{involweight}, we have 
\beno
&&\sum_{\l\geq k+ N_0}|\br{Q_k(\cP_{\l}g,\tP_{\l}h),\tP_{\l}f}|\ls \sum_{\l\geq k+N_0}2^{-k}2^{\l} \|\cP_\l g\|_{L^1}\|\tP_\l h\|_{H^{a_1}}\|\tP_\l f\|_{H^{b_1}}2^{-\l}\\
&\ls&\sum_\l \|{\cP_\l g}\|_{L^1_1}\tP_{\l}h\|_{H^{a_1}}\|\tP_\l f\|_{H^{b_1}}|\ls \|g\|_{L^1_1}\|h\|_{H^{a_1}_{\omega_1}}\|f\|_{H^{b_1}_{\omega_2}},
\eeno
and
\beno
&&\sum_{\m\leq k+2N_0}|\br{Q_k(\tP_{k}g,\cP_{\m}h),\tP_{\m}f}|\ls \sum_{\m\leq k+2N_0}2^{-(k-\m)}\|\tP_{k}g\|_{L^1}\|\cP_{\m}h\|_{H^{a_1}}\|\tP_{\m}f\|_{H^{b_1}}2^{-\m}\\
&\ls&\|g\|_{L^1}\|h\|_{H^{a_1}_{\omega_1}}\|f\|_{H^{b_1}_{\omega_2}},\quad \mbox{with}\quad a_1+b_1=2,\omega_1+\omega_2=-1.
\eeno
Patching together the above three three estimates, we obtain that
\ben\label{upperQksob}\sum_{k\geq0}|\br{Q_k(g,h),f}|\ls \|g\|_{L^1_1}\|h\|_{H^{a_1}_{\omega_1}}\|f\|_{H^{b_1}_{\omega_2}},\een
where $a_1,b_1\in[0,2]$, $a_1+b_1=2$ and $\omega_1+\omega_2=-1$.

Now we consider the estimates in anisotropic spaces. Combining the first term on the right hand side of \eqref{involweight} and \eqref{upperkanis},  we can obtain that
\[\ba
&\sum_{k\geq-1}|{\br{Q_k(\U_{k-N_0}g,\tP_{k}h),\tP_{k}f}}|\ls\sum_{k\geq-1}2^{-3k}\|g\|_{L^1_2}\rr{\LLT{\rr{-\Delta_{\S^2}}^{\f{1}{2}}\tP_kh}+\|\tP_kh\|_{H^1}}\\
&\rr{\LLT{\rr{-\Delta_{\S^2}}^{\f{1}{2}}\tP_kf}+\|\tP_kf\|_{H^1}}+\sum_{k\geq-1}2^{-2k}\LLO{g}\|\tP_kh\|_{H^{a_2}}\|\tP_kf\|_{H^{b_2}}\\
\ls&\|g\|_{L^1_2}\rr{\|\rr{-\Delta_{\S^2}}^{\f{1}{2}}h\|_{L^2_{\omega_1}}+\|h\|_{H^1_{\omega_1}}}\rr{\|\rr{-\Delta_{\S^2}}^{\f{1}{2}}f\|_{L^2_{\omega_2}}+\|f\|_{H^1_{\omega_2}}}+\LLO{g}\|h\|_{H^{a_2}_{\omega_3}}\|f\|_{H^{b_2}_{\omega_4}},
\ea\]
where $a_2,b_2\in[0,1], a_2+b_2=1$, $\omega_1+\omega_2=-3$ and $\omega_3+\omega_4=-2$.

For the second term and the third term on the right hand side of \eqref{involweight}, we use \eqref{upperksob} to get that
\beno
&&\sum_{\l\geq k+ N_0}|\br{Q_k(\cP_{\l}g,\tP_{\l}h),\tP_{\l}f}|+\sum_{\m\leq k+2N_0}|\br{Q_k(\tP_{k}g,\cP_{\m}h),\tP_{\m}f}|\\
&\ls&\sum_{\l\geq k+ N_0}2^{-k}\LLO{\cP_\l g}\|\tP_\l h\|_{H^{1}}\|\tP_\l f\|_{H^{1}}+\sum_{\m\leq k+2N_0}2^{-k}\LLO{\tP_kg}\|\cP_\m h\|_{H^{1}}\|\cP_\m f\|_{H^{1}}\\
&\ls& \|g\|_{L^1_3}\|h\|_{H^{1}_{\omega_1}}\|f\|_{H^1_{\omega_2}},\quad \omega_1+\omega_2=-3.
\eeno

Patching together the above two three estimates, we are led to 
\ben\label{upperQkan}  
\sum_{k\geq0}|\br{Q_k(g,h),f}|&\ls&\|g\|_{L^1_3}\rr{\|\rr{-\Delta_{\S^2}}^{\f{1}{2}}h\|_{L^2_{\omega_1}}+\|h\|_{H^1_{\omega_1}}}\rr{\|\rr{-\Delta_{\S^2}}^{\f{1}{2}}f\|_{L^2_{\omega_2}}+\|f\|_{H^1_{\omega_2}}}\notag\\&&+\|g\|_{L^1}\|h\|_{H^{a_2}_{\omega_3}}\|f\|_{H^{b_2}_{\omega_4}},
\een
where $a_2,b_2\in[0,1],a_2+b_2=1$, $\omega_1+\omega_2=-3$ and $\omega_3+\omega_4=-2$.


\underline{Step 3:  Estimate of $\br{Q_{-1}(g,h),f}$.}  We note that  $b_{-1}(z)=b(z)\psi(z)+a(z)\na\psi(z)$ and $c_{-1}(z)=\delta+b(z)\cdot\na\psi(z)+\na\cdot(a(z)\na\psi(z))$. Since
\[\widehat{a}(\xi)=\left(8\pi \f{\xi_i\xi_j}{|\xi|^4}\right)_{i\leq i,j\leq3},\quad\quad \widehat{a_{-1}}(\xi)=\widehat{a}*\hat{\psi}(\xi)=8\pi\int \f{(\xi-\eta)_i(\xi-\eta)_j}{|\xi-\eta|^4}\hat{\psi}(\eta)d\eta,\]
then we have $|\widehat{a_{-1}}(\xi)|\ls\lr{\xi}^{-2}$ and $\|\F_\p a_{-1}\|_{L^2}\ls2^{-\p/2}$.

Now apply these estimates to  $\fM^1$, then by  Bernstein inequality,  we get that
\ben\label{M1s}\nr{\fM^1_{-1,\p,\l}(g,h,f)} 
&\ls&2^{-\p/2}\|\F_\p g\|_{L^2}\|\F_\l h\|_{L^2}\|\tF_\p f\|_{L^2}2^{2\l}+\|\F_\p g\|_{L^2}\|\F_\l h\|_{L^\infty}\|\tF_\p f\|_{L^2}\notag\\
&\ls&\|\F_\p g\|_{L^2}\|\F_\l h\|_{L^2}\|\tF_\p f\|_{L^2}2^{\f32 \l}.\een

Thus for any $a_1\in[0,\f32],b_1\in[0,2]$ verifying $a_1+b_1=2$ and $\sss>\f12$, we have that
\beno
\sum_{\l\leq \p-N_0}\nr{\fM^1_{-1,\p,\l}(g,h,f)}&\ls& \sum_{\l\leq \p-N_0}2^{-\f \p 2}(2+\p)^\sss\|\F_\p g\|_{L^2}2^{a_1\l}\|\F_\l h\|_{L^2}2^{b_1\p}\|\tF_\p f\|_{L^2}2^{(\f32-a_1) \l}2^{(\f12-b_1)\p}(2+\p)^{-\sss}\\
&\ls&\|g\|_{H^{-\f12,\sss}}\|h\|_{H^{a_1}}\|f\|_{H^{b_1}},
\eeno
where we use the fact that $(\f32-a_1)\l+(\f12-b_1)\p=-(\f32-a_1)(\l-\p), a_1\in [0,\f32]$ and $\sss>\f12$.

 Similarly for $\fM^4$, by Hardy-Littlewood-Sobolev inequality we have 
\ben\label{M4s} \nr{\fM^4_{-1,\p,\m}(g,h,f)}&=&\nr{\int (\tF_\p a_{-1}*\F_\p g):\tF_\p h\na^2\F_\m fdv+2\int (b_{-1}*\F_\p g)\cdot\tF_\p h\na\F_\m fdv}\notag\\
&\ls&\|\F_\p g\|_{L^2}\|\tF_\p h\|_{L^2}\|\F_\m f\|_{L^2}2^{\f32\m}.\een
Thus for any $a_1\in[0,2],b_1\in[0,\f32]$ verifying $a_1+b_1=2$ and $\sss>\f12$, we have that
\beno
\sum_{\m\leq \p-N_0}\nr{\fM^4_{-1,\p,\m}(g,h,f)}&\ls& 2^{-\f12\p}(2+\p)^\sss\|\F_\p g\|_{L^2}2^{a_1\p}\|\tF_\p h\|_{L^2}2^{b_1\m}\|\F_\m f\|_{L^2}2^{(\f32-b_1)\m}2^{(\f12-a_1)\p}(2+\p)^{-\sss}\\
&\ls&\|g\|_{H^{-\f12,\sss}}\|h\|_{H^{a_1}}\|f\|_{H^{b_1}},
\eeno
where we use the fact that $(\f32-b_1)\m+(\f12-a_1)\p=-(\f32-b_1)(\m-\p), b_1\in [0,\f32]$ and $\sss>\f12$.

For $\fM^2$ and $\fM^3$, we have
\ben\label{M2s}\nr{\fM^2_{-1,\l}(g,h,f)} 
&\ls& \sum_{\p\leq \l-N_0}2^{-\f{\p}2}\|\F_{\p} g\|_{L^2}\|\F_\l h\|_{L^2}\|\tF_\l f\|_{L^2}2^{2\l},\\
 \label{M3s}|\fM^3_{-1,\p}(g,h,f)|&\ls&\|\F_\p g\|_{L^2}\|\tF_\p h\|_{L^2}\|\tF_\p f\|_{L^2}2^{\f32p}.\een
 Then it is easy to derive that
 \beno
 \sum_{\l\geq-1}\nr{\fM^2_{-1,\l}(g,h,f)}+\sum_{\p\geq-1}\nr{\fM^3_{-1,\p}(g,h,f)}\ls \|g\|_{H^{-\f12,\sss}}\|h\|_{H^{a_1}}\|f\|_{H^{b_1}}
 \eeno
 with $a_1,b_1\in[0,2]$ and $a_1+b_1=2$.

Patching together all the above estimates, if $a_1,b_1\in[\f12,\f32]$ and $a_1+b_1=2$, we have 
\ben\label{q-1}
\nr{\br{Q_{-1}(g,h),f}}\ls_{\eps}\|g\|_{H^{-\f{1}{2},\sss}}\|h\|_{H^{a_1}}\|f\|_{H^{b_1}}.
\een

Observing that for $Q_{-1}$, it holds that $|v-v_*|\ls 1$ which implies $\<v\>\sim\<v_*\>$, tigether with \eqref{q-1} yields that
\ben\label{upperQ-1}\nr{\br{Q_{-1}(g,h),f}}\ls_{\eps}\sum_\l\|\cP_\l g\|_{H^{-\f{1}{2},\sss}}\|\tP_\l h\|_{H^{a_1}}\|\tP_\l f\|_{H^{b_1}}
\ls\|g\|_{H^{-\f{1}{2},\sss}_{\omega_1}}\|h\|_{H^{a_1}_{\omega_2}}\|f\|_{H^{b_1}_{\omega_3}}
\een
for $a_1,b_1\in[\f12,\f32], a_1+b_1=2$ and $\omega_1+\omega_2+\omega_3=0$.

One may easy to check that \eqref{upperQksob} and \eqref{upperQ-1} yield \eqref{uppersob}, \eqref{upperQkan} and \eqref{upperQ-1} yield  \eqref{upperan}. Then we complete the proof of this proposition.
\end{proof}

\subsection{Commutator between $\F_j$ and $Q$}
Now we estimate the commutator between $\F_j$ and $Q$. We split it into two parts:
\ben\label{D1D2definition}
\notag\br{\F_jQ(g,h)-Q(g,\F_jh),\F_jf}=\underbrace{\sum_{k\geq0}\br{\F_jQ_k(g,h)-Q_k(g,\F_jh),\F_jf}}_{:=\mathfrak{D}_1} 
+\underbrace{\br{\F_jQ_{-1}(g,h)-Q_{-1}(g,\F_jh),\F_jf}}_{:=\mathfrak{D}_2}.\\
\een

\subsubsection{Estimate of $\mathfrak{D}_1$} We first give the upper bound of $\mathfrak{D}_1$.
\begin{prop} It holds that
	\ben
	&&|\mathfrak{D}_1|\label{FjQ}\ls_{N}\sum_{k\geq-1} 2^{-2k}\|g\|_{L^1_2}\LLT{\mF_j\mP_kh}\LLT{\mF_j\mP_kf}2^j\notag\\
	&+&\sum_{k\geq-1}2^{-2k}2^{-jN}\|g\|_{L^1_2}\HHL{\mP_kh}\HHL{\mP_kf}+2^{-jN}\LLO{g}\HHLL{h}\HHLL{f},\een
	where $N\in \N$ can be suitably large, and the localized operators $\mF,\mP$ are defined in Definition \ref{Fj}.
\end{prop}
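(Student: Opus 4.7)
The plan is to exploit the fact that $\F_j$, being a Fourier multiplier, commutes with $\nabla$. From the identity $\br{Q_k(g,h),f}=\int(a_k*g):\nabla^2 h\, f\,dv-\int(c_k*g)h\,f\,dv$, this commutation yields the pointwise decomposition
\[
\F_j Q_k(g,h)-Q_k(g,\F_j h)=\sum_{i,l}[\F_j,(a_k*g)_{il}]\,\partial_i\partial_l h-[\F_j,c_k*g]\,h,
\]
so the whole estimate reduces to controlling commutators between $\F_j$ and multiplication by the smooth functions $m=(a_k*g)_{il}$ or $c_k*g$.

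For these I will use the kernel representation
\[
[\F_j,m]u(v)=\int\tphi_j(v-v')\bigl(m(v')-m(v)\bigr)u(v')\,dv'
\]
combined with a Taylor expansion of $m$ around $v$ up to order $N$. The leading term extracts a factor $2^{-j}\nabla m(v)$ and produces a new frequency projector at scale $2^j$ with symbol $i\nabla\varphi(2^{-j}\xi)$. Applying this with $m=a_k*g$, and using $\|\nabla(a_k*g)\|_{L^\infty}=\|b_k*g\|_{L^\infty}\ls 2^{-2k}\|g\|_{L^1}$ from \eqref{Fpa}, together with the fact that $\nabla^2$ costs $2^{2j}$ on data localized near frequency $2^j$, yields the leading bound $2^{-2k}\cdot 2^{j}$ after Cauchy--Schwarz, matching the first term on the right-hand side.

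To bring in the phase localizers $\mP_k$, I invoke the decomposition \eqref{Qdecom}, which produces for each $k$ three configurations: the diagonal one where $g$ is low-phase and $h,f$ sit on the annulus $|v|\sim 2^k$, and two off-diagonal ones where either $g$ or one of $h,f$ is on a higher annulus. In the diagonal case the phase localizations $\tP_k h,\tP_k f$ feed directly into the commutator estimate and supply the $\LLT{\mF_j\mP_k h}\LLT{\mF_j\mP_k f}$ factor. The off-diagonal configurations are handled by exploiting the compact support of $a_k$ in $|v-v_*|\sim 2^k$, which forces $|v_*|$ to be comparable to $|v|$ and hence lets one absorb the extra weight into $\|g\|_{L^1_2}$.

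The two error terms come from the higher-order Taylor remainders: pushing the expansion to order $N$ brings out the factor $2^{-jN}$ together with $\nabla^N(a_k*g)$, whose $L^\infty$-norm is controlled via the rapid decay estimates on $\F_p a_k$ in \eqref{Fpa}. These remainders are paired against phase-matched but frequency-separated parts of $h,f$ (yielding the second term with $\mP_k$ localization) or against both phase- and frequency-separated parts (yielding the third catch-all $\HHLL{\cdot}$ term). The main technical obstacle will be the bookkeeping needed to track simultaneously the dyadic indices $k,j$, the Taylor-remainder order $N$, and the phase-frequency compatibility across all three regimes of \eqref{Qdecom}, while checking that the weight $L^1_2$ on $g$ is always sufficient to absorb the $|v_*|$-factors produced by the off-diagonal phase manipulations.
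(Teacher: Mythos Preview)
Your treatment of the main term is essentially the paper's argument: the first-order Taylor expansion of the commutator kernel is the physical-space dual of the paper's Fourier-side estimate $|\varphi(2^{-j}\xi)-\varphi(2^{-j}\eta)|\ls 2^{-j}|\xi-\eta|$, and together with $\|\nabla(a_k*g)\|_{L^\infty}\ls 2^{-2k}\|g\|_{L^1}$ it reproduces exactly the $2^{-2k}2^j\LLT{\mF_j\mP_k h}\LLT{\mF_j\mP_k f}$ contribution (this is the paper's term $\mathscr{B}_2$).

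The gap is in your mechanism for the two error terms. Pushing the physical-space Taylor expansion of $m=a_k*g$ to order $N$ gives a remainder bounded by $2^{-jN}\|\nabla^N(a_k*g)\|_{L^\infty}\|\partial^2 h\|_{L^2}\|\F_j f\|_{L^2}$, because the remainder operator no longer carries a Fourier multiplier supported at scale $2^j$ and hence cannot localize $\partial^2 h$ in frequency. This forces an $H^2$-type norm on $h$, whereas the proposition demands $H^{-N}$; in the $H^{-1/2}$ framework where this estimate is applied, $H^2$ control is simply unavailable. Your invocation of \eqref{Fpa} here is a misattribution: those bounds concern $\|\F_\p a_k\|_{L^\infty}$, not $\|\nabla^N(a_k*g)\|_{L^\infty}$, and they do not enter the Taylor remainder at all.

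The paper obtains the $H^{-N}$ terms by a different route. It first performs a dyadic decomposition of $g$ and $h$ in \emph{frequency} (writing $g=\sum_\p\F_\p g$, $h=\sum_\a\F_\a h$) and isolates the configurations where $\p$ or $\a$ is far from $j$. On those off-diagonal pieces ($\mathscr{B}_1,\mathscr{B}_3,\mathscr{B}_4$) one pairs $\F_\p g$ against $\tF_\p a_k$, and the rapid decay $\|\tF_\p a_k\|_{L^\infty}\ls_N 2^{-\p N}2^{-kN}$ from \eqref{Fpa} converts the frequency separation directly into the factors $2^{-jN}\|h\|_{H^{-N}}\|f\|_{H^{-N}}$. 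Only on the single diagonal block $\mathscr{B}_2$ (low-frequency $g$, $h$ and $f$ both at scale $2^j$) is the first-order commutator gain used. So the smoothness of $a_k$ is exploited on the Fourier side of $a_k$ itself, not through high derivatives of the product $a_k*g$; this is the missing idea in your proposal.
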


\begin{proof} We divide the proof into several steps.
	
\smallskip

\underline{Step 1: Preliminary estimate of $\br{\F_jQ_k(g,h)-Q_k(g,\F_jh),\F_jf}$ with $j\ge0$.}
 Thanks to \eqref{Qkdecom}, we have
\begin{equation}\label{decom}
	\begin{aligned}
		&\br{\F_jQ_k(g,h)-Q_k(g,\F_jh),\F_jf}=\sum_{\p\geq j+N_0}\br{Q_k(\F_\p g,\tF_\p h),\F_j^2f}+\sum_{\p\leq j-N_0}\br{\F_jQ_k(\F_\p g,\tF_jh)\\
			&-Q_k(\F_\p g,\F_jh),\F_jf}+\sum_{\a\leq j-N_0}\br{Q_k(\tF_jg,\F_\a h),\F_j^2f} +\br{Q_k(\tF_jg,\tF_jh),\F_j^2f}-\br{Q_k(\tF_jg,\F_jh),\F_jf}\\
		&:=\mathscr{B}_1+\mathscr{B}_2+\mathscr{B}_3+\mathscr{B}_4.
	\end{aligned}
\end{equation}

Observing that the $\mathscr{B}_1$, $\mathscr{B}_3$ and $\mathscr{B}_4$ enjoy the similar structure as $\fM^4_{k,j,\a}$,  $\fM^1_{k,j,\a}$ and $\fM^3_{k,j} $, respectively. Thus we can derive from \eqref{M1}\eqref{M4} and \eqref{M3} that
\[\ba
\nr{\mathscr{B}_1}+|\mathscr{B}_3|+|\mathscr{B}_4|&\ls_{N}2^{-kN}2^{-jN}\LLO{g}\|h\|_{H^{-N}}\|f\|_{H^{-N}}.
\ea\]
For $\mathscr{B}_2$, by Fourier transform, 
\beno
\mathscr{B}_2=2^{2k}\iint \widehat{a_0}(2^k(\xi-\eta)):\xi\mult(\xi-2\eta)\widehat{\cS_{j-N_0}g}(\xi-\eta)\widehat{\tF_jh}(\eta)\ov{\widehat{\F_jf}(\xi)}(\vf(2^{-j}\xi)-\vf(2^{-j}\eta))d\eta d\xi.
\eeno
Since $|\xi|,|\xi-2\eta|\ls 2^j$ and $|\vphi(2^{-j}\xi)-\vphi(2^{-j}\eta)|\ls 2^{-j}|\xi-\eta| $, we have that
\beno
|\mathscr{B}_2|
&\ls& 2^{2k}2^j\left(\int |\widehat{a_0}(2^k\xi)||\xi|d\xi\right)\LLO{\cS_{j-N_0}g}\LLT{\tF_jh}\LLT{\F_jf}\ls 2^{-2k}\LLO{g}\LLT{\tF_jh}\LLT{\F_jf}2^j. 
\eeno
These lead to that 
\ben\label{FjQk}
 &&\notag\left|\br{\F_jQ_k(g,h)-Q_k(g,\F_jh),\F_jf}\right|\\
&\ls&2^{-2k}\LLO{g}\LLT{\tF_jh}\LLT{\F_jf}2^j+C_N2^{-2kN}2^{-jN}\LLO{g}\|h\|_{H^{-N}}\|f\|_{H^{-N}}.\een
Following the same argument, one may find that \eqref{FjQk} still holds for $j=-1$.

\smallskip

\underline{Step 2: Dyadic decomposition in phase space.} Recall the definition of $\mathfrak{D}_1$ in \eqref{D1D2definition} and by \eqref{phasedecom}, one may check that 
$\mathfrak{D}_1=  
 \mathfrak{D}_{1,1}+\mathfrak{D}_{1,2}+\mathfrak{D}_{1,3}$ with
\[\ba
\mathfrak{D}_{1,1}=&\sum_{k\geq N_0-1}\br{\F_jQ_k(\U_{k-N_0}g,\tP_{k}h)-Q_k(\U_{k-N_0}g,\F_j\tP_{k}h),\F_j\tP_kf} +\bigg(\sum_{k\geq N_0-1}\br{Q_k(\U_{k-N_0}g,\tP_kh),[\tP_k,\F_j^2]f}\\
& +\br{Q_k(\U_{k-N_0}g,[\F_j,\tP_{k}]h),\F_j\tP_kf}+\br{Q_k(\U_{k-N_0}g,\tP_{k}\F_jh),[\F_j,\tP_{k}]f}\bigg) 
:=\mathfrak{D}_{1,1}^1+\mathfrak{D}_{1,1}^2;\\
\mathfrak{D}_{1,2}=&\sum_{\l\geq k- N_0}\br{\F_jQ_k(\cP_{\l}g,\tP_{\l}h)-Q_k(\cP_{\l}g,\F_j\tP_{\l}h),\F_j\tP_{\l}f} 
 +\bigg(\sum_{\l\geq k- N_0}\br{Q_k(\cP_\l g,\tP_\l h),[\tP_\l,\F_j^2]f}\\
&+ \br{Q_k(\cP_{\l}g,[\F_j,\tP_{\l}]h),\F_j\tP_{\l}f}+\br{Q_k(\cP_{\l}g,\tP_{\l}\F_jh),[\F_j,\tP_{\l}]f}\bigg) 
:=\mathfrak{D}_{1,2}^1+\mathfrak{D}_{1,2}^2;\\
\mathfrak{D}_{1,3}=&\sum_{\m\leq k+2N_0}\br{\F_jQ_k(\tP_kg,\cP_\m h)-Q_k(\tP_kg,\F_j\cP_\m h),\F_j\tP_\m f} 
 +\bigg(\sum_{\m\leq k+2N_0}\br{Q_k(\tP_kg,\cP_\m h),[\tP_\m,\F_j^2]f}\\
&+\br{Q_k(\tP_kg,[\F_j,\cP_\m] h),\F_j\tP_\m f}+\br{Q_k(\tP_{k}g,\cP_{\m}\F_jh),[\F_j,\tP_{\m}]f}\bigg)
:=\mathfrak{D}_{1,3}^1+\mathfrak{D}_{1,3}^2,
\ea\]
where \(\mathfrak{D}_{1,i}^1\) (\(i=1,2,3\)) are the main terms with the structure of commutators between \(\mathcal{F}_j\) and \(Q\), while the remaining terms are commutators between \(\mathcal{F}\) and \(\mathcal{P}\).

\smallskip

$\bullet$ {\it Estimate of $\mathfrak{D}_{1,i}^1$.} Thanks to \eqref{FjQk}, we get that
\[\ba
|\mathfrak{D}_{1,1}^1|&\ls\sum_k2^{-2k}\LLO{g}\LLT{\tF_j\tP_kh}\LLT{\F_j\tP_kf}2^j+C_N\sum_k 2^{-2kN}2^{-jN}\LLO{g}\|\tP_kh\|_{H^{-N}}\|\tP_kf\|_{H^{-N}}\\
&\ls\sum_k2^{-2k}\LLO{g}\LLT{\tF_j\tP_kh}\LLT{\F_j\tP_kf}2^j+C_N2^{-jN}\LLO{g}\|h\|_{H^{-N}_{-N}}\|f\|_{H^{-N}_{-N}};
\ea\]
\[\ba
|\mathfrak{D}_{1,2}^1|&\ls\sum_{k\leq \l+N_0}2^{-2k}\LLO{\cP_\l g}\LLT{\tF_j\tP_\l h}\LLT{\F_j\tP_\l f}2^j+C_N\sum_{k\leq \l+N_0}2^{-2kN}2^{-jN}\LLO{\cP_\l g}\|\tP_\l h\|_{H^{-N}}\|\tP_\l f\|_{H^{-N}}\\
&\ls\sum_{\l}2^{-2\l}\|g\|_{L^1_2}\LLT{\tF_j\tP_\l h}\LLT{\F_j\tP_\l f}2^j+C_N\sum_l2^{-jN}2^{-2\l}\|g\|_{L^1_2}\|\tP_\l h\|_{H^{-N}}\|\tP_\l f\|_{H^{-N}};
\ea\]
\[\ba
|\mathfrak{D}_{1,3}^1|&\ls\sum_{\m\leq k+2N_0}\left(\LLO{\tP_kg}\LLT{\tF_j\cP_\m h}\LLT{\F_j\tP_\m f}2^{-2\m}2^j+C_N2^{-2kN}2^{-jN}\LLO{\tP_kg}\|\cP_\m h\|_{H^{-N}}\|\tP_\m f\|_{H^{-N}}\right)\\
&\ls\sum_\m 2^{-2\m}\LLO{g}\LLT{\tF_j\cP_\m h}\LLT{\F_j\tP_\m f}2^j+C_N2^{-jN}\LLO{g}\|h\|_{H^{-N}_{-N}}\|f\|_{H^{-N}_{-N}}.
\ea\]

$\bullet$ {\it Estimate of $\mathfrak{D}^2_{1,i}$.}  We notice that all the terms in $\mathfrak{D}^2_{1,i}$ are similar. Thus we only provide a detailed estimate for  the first term in $\mathfrak{D}^2_{1,1}$ in the sequel. We observe that 
\[\ba
\br{Q_k&(\U_{k-N_0}g,\tP_kh),[\tP_k,\F_j^2]f}=\sum_{\a\leq \p-N_0}\fM^1_{k,\p,\a}(\U_{k-N_0}g,\tP_kh,[\tP_k,\F_j^2]f)+\sum_{\a\geq-1}\fM^2_{k,\a}(\U_{k-N_0}g,\tP_kh,\\
&[\tP_k,\F_j^2]f)+\sum_{\p\geq-1}\fM^3_{k,\p}(\U_{k-N_0}g,\tP_kh,[\tP_k,\F_j^2]f) +\sum_{\b\leq \p-N_0}\fM^4_{k,\p,\b}(\U_{k-N_0}g,\tP_kh,[\tP_k,\F_j^2]f).
\ea\]

Using \eqref{M1} and Lemma \ref{PFCom}, for some number $\mathfrak{n}>10$, we have
\[\ba
\sum_{\a\leq \p-N_0}&|\fM^1_{k,\p,\a}(\U_{k-N_0}g,\tP_kh,[\tP_k,\F_j^2]f)|\ls_{N}\sum_{\a\leq \p-N_0}2^{-\p N}2^{-kN}2^{-\a N}\LLO{g}\LLT{\F_{\a}\tP_kh}\LLT{\tF_\p[\tP_k,\F_j^2]f}\\
&\ls_{N}2^{-kN}\LLO{g}\HHL{\tP_kh}\Big(\sum_{|\p-j|\leq \mathfrak{n}N_0}2^{-\p N}\LLT{\tF_\p[\tP_k,\F_j^2]f}+\sum_{|\p-j|> \mathfrak{n}N_0}2^{-\p N}\LLT{\tF_\p[\tP_k,\F_j^2]f}\Big).\\&\ls_{N}2^{-kN}2^{-jN}\LLO{g}\|\tP_kh\|_{H^{-N}}\left(\|\mP_kf\|_{H^{-N}}+\HHLL{f}\right).
\ea\]
  
Similarly we can get that
\[\sum_{\b\leq \p-N_0}|\fM^4_{k,\p,\b}(\U_{k-N_0}g,\tP_kh,[\tP_k,\F_j^2]f)|\ls_{N}2^{-kN}2^{-jN}\LLO{g}\|\tP_kh\|_{H^{-N}}\left(\|\mP_kf\|_{H^{-N}}+\HHLL{f}\right).\]

For the term involving $\fM^2_{k,\p,\a}$, by \eqref{M2} and Lemma \ref{PFCom}, one can obtain that
\[\ba
&\sum_{\a\geq-1}|\fM^2_{k,\a}(\U_{k-N_0}g,\tP_kh,[\tP_k,\F_j^2]f)|\ls \sum_{\a\geq-1}2^{-k}\LLO{g}\LLT{\F_\a\tP_kh}\LLT{\tF_\a [\tP_k,\F_j^2]f}2^{2\a}\\
\ls&2^{-k}\LLO{g}\Big(\sum_{|\a-j|\leq \mathfrak{n}N_0}\LLT{\F_\a\tP_kh}\LLT{\tF_\a [\tP_k,\F_j^2]f}2^{2j}+\sum_{|\a-j|> \mathfrak{n}N_0}\LLT{\F_\a\tP_kh}\LLT{\tF_\a [\tP_k,\F_j^2]f}2^{2\a}\Big)\\
\ls_{N}&2^{-2k}\LLO{g}\Big(\sum_{|\a-j|\leq \mathfrak{n}N_0}\LLT{\F_\a\tP_kh}\Big)\Big(\sum_{|\alpha|=1}^{2N}\LLT{\hat{\F}_{j,\alpha}\tP_{k,\alpha}f}\Big)2^j+2^{-kN}2^{-jN}\LLO{g}\|\mP_kh\|_{H^{-N}}\HHLL{f}.
\ea\]
Similarly it holds that
\[\ba
&\sum_{\p\geq-1}|\fM_{k,\p}^3(\U_{k-N_0}g,\tP_kh,[\tP_k,\F_j^2]f)|\\
\ls_{N}&2^{-2k}\LLO{g}\Big(\sum_{|\p-j|\leq \mathfrak{n}N_0}\LLT{\F_\p\tP_kh}\Big)\Big(\sum_{|\alpha|=1}^{2N}\LLT{\hat{\F}_{j,\alpha}\tP_{k,\alpha}f}\Big)2^j+2^{-kN}2^{-jN}\LLO{g}\|\mP_kh\|_{H^{-N}}\HHLL{f}.
\ea\]

Recall Definition \ref{Fj} for $\mF_j$ and $\mP_k$ and  patch together all the estimates, we have that 
\[|\mathfrak{D}_{1,1}^2|\ls_{N}\sum_k 2^{-2k}\LLO{g}\LLT{\mF_j\mP_kh}\LLT{\mF_j\mP_kf}2^j+2^{-jN}\LLO{g}\HHLL{h}\HHLL{f},\]
which is enough to conclude our desired result.
\end{proof}

\subsubsection{Estimate of $\mathfrak{D}_2$}  Recall the definition of $\mathfrak{D}_2$ in \eqref{D1D2definition} and we give the upper bound of $\mathfrak{D}_2$.
\begin{prop} It holds that
	\ben\label{FjQ-1}
&&|\mathfrak{D}_2|\ls_N\sum_{\l\geq-1}\sum_{\p\geq j+N_0}\LLT{\mF_\p\mP_\l g}\LLT{\mF_\p\mP_\l h}\LLT{\mF_j\mP_\l f}2^{\f32j}\notag\\
	&+&\sum_{\l\geq-1}\sum_{\p\leq j-N_0}2^{\f{\p}{2}}\LLT{\mF_\p\mP_\l g}\LLT{\mF_j\mP_\l h}\LLT{\mF_j\mP_\l f}2^j+\sum_{\l\geq-1}\sum_{\a\leq j-N_0}\LLT{\mF_j\mP_\l g}\LLT{\mF_\a\mP_\l h}\LLT{\mF_j\mP_\l f}2^{\f32\a}\notag\\
	&+&\sum_{\l\geq-1}\LLT{\mF_j\mP_\l g}\LLT{\mF_j\mP_\l h}\LLT{\mF_j\mP_\l f}2^{\f32j}+\Big(\sum_{\l\geq-1}\sum_{\p\geq j+N_0}\LLT{\mF_\p\mP_\l g}\LLT{\mF_\p\mP_\l h}\LLT{\mF_j\mP_\l f}2^{\f j2}2^{-\l}\notag\\
		&+&\sum_{\l\geq-1}\sum_{\p\leq j-N_0}\LLT{\mF_\p\mP_\l g}2^{-\f{\p}{2}}\LLT{\mF_j\mP_\l h}\LLT{\mF_j\mP_\l f}2^j2^{-\l}+\sum_{\l\geq-1}\sum_{\a\leq j-N_0}2^{\f \a2}2^{-\l}\LLT{\mF_j\mP_\l g}\LLT{\mF_\a\mP_\l h}\notag\\
		&&\times\LLT{\mF_j\mP_\l f}
		+\sum_{\l\geq-1}\LLT{\mF_j\mP_\l g}\LLT{\mF_j\mP_\l h}\LLT{\mF_j\mP_\l f}2^{\f j2}2^{-\l}+2^{-jN}\|g\|_{L^2_{-N}}\|h\|_{L^2_{-N}}\HHLL{f}\Big),
	\een
		where $N\in \N$ can be suitably large, and the localized operators $\mF,\mP$ are defined in Definition \ref{Fj}.
\end{prop}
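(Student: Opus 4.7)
The plan is to reproduce the two-step scheme used for $\mathfrak{D}_1$, replacing the fast decay of the kernels $a_k,b_k,c_k$ for $k\ge 0$ by the two bounds $|\widehat{a_{-1}}(\xi)|\ls\lr{\xi}^{-2}$ and $\|\F_\p a_{-1}\|_{L^2}\ls 2^{-\p/2}$ established in Step~3 of the proof of Proposition~\ref{UpperboundofQ}. Since $a_{-1}=a(z)\psi(z)$ is compactly supported in $|z|\ls 1$, the convolution integrals in $Q_{-1}$ satisfy $\lr{v}\sim\lr{v_*}$, a feature that will cause the phase-space decomposition to collapse onto its diagonal.

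\textbf{Step 1: frequency splitting of the commutator.}
I first use the Bony-type decomposition \eqref{Qkdecom} to split
\beno
\br{\F_j Q_{-1}(g,h) - Q_{-1}(g,\F_j h),\F_j f} = \mathscr{B}_1 + \mathscr{B}_2 + \mathscr{B}_3 + \mathscr{B}_4
\eeno
exactly as in \eqref{decom}, where $\mathscr{B}_1,\mathscr{B}_3,\mathscr{B}_4$ collect respectively the ranges $\p\ge j+N_0$, $\a\le j-N_0$ and the diagonal block, while $\mathscr{B}_2$ collects the low-frequency-$g$ block $\p\le j-N_0$ (which is a genuine commutator through $\vf(2^{-j}\xi)-\vf(2^{-j}\eta)$). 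Applying \eqref{M1s}, \eqref{M4s}, \eqref{M2s}, \eqref{M3s} term-by-term to $\mathscr{B}_1,\mathscr{B}_3,\mathscr{B}_4$ already produces the first, third and fourth sums on the right-hand side of \eqref{FjQ-1}, with the prefactors $2^{3j/2}$ or $2^{3\a/2}$ dictated by those estimates. For $\mathscr{B}_2$, I bound the Fourier-side integral using $|\vf(2^{-j}\xi)-\vf(2^{-j}\eta)|\ls 2^{-j}|\xi-\eta|$ and $|\xi\otimes(\xi-2\eta)|\ls 2^{2j}$ on the relevant supports, Cauchy--Schwarz in the $\eta$ variable, and finally $\int|\widehat{a_{-1}}(\zeta)\widehat{\F_\p g}(\zeta)|d\zeta\le\|\F_\p a_{-1}\|_{L^2}\|\F_\p g\|_{L^2}\ls 2^{-\p/2}\|\F_\p g\|_{L^2}$, which produces the prefactor $2^{\p/2}\cdot 2^j$ appearing in the second sum of \eqref{FjQ-1}.

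\textbf{Step 2: phase-space decomposition and commutator bookkeeping.}
I then dyadically decompose $g,h,f$ in phase space. Because $a_{-1}$ is compactly supported, only the diagonal $\cP_\l g$, $\tP_\l h$, $\tP_\l f$ with a common $\l$ is non-negligible, yielding the sums over $\l\geq -1$ with $\mP_\l$-localized inputs. Rearranging the projectors so that $\F_j$ lands directly on $\mP_\l$-localized factors forces us to commute $\F_j$ with $\cP_\l$ and $\F_j^2$ with $\tP_\l$; Lemma~\ref{PFCom} replaces each such commutator by its principal symbol plus an error, converting one derivative into the weight $2^{-\l}$ and producing the four parenthesized sums of \eqref{FjQ-1}. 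The far-from-diagonal tails $|\p-j|>\mathfrak{n}N_0$ and $|\a-j|>\mathfrak{n}N_0$ are absorbed into the remainder $2^{-jN}\|g\|_{L^2_{-N}}\|h\|_{L^2_{-N}}\HHLL{f}$ thanks to the rapid Fourier decay of $\psi$ and $\vf$.

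\textbf{Main obstacle.}
The delicate point is $\mathscr{B}_2$: the only source of convergence in the low-frequency sum over $\p\le j-N_0$ is the mean-value gain $2^{-j}|\xi-\eta|$; losing it would leave a non-summable factor $2^{2j}\cdot 2^{-2\p}$, while combining it with the $L^2$ bound $\|\F_\p a_{-1}\|_{L^2}\ls 2^{-\p/2}$ yields exactly the $2^{\p/2}\cdot 2^j$ prefactor that is both summable in $\p$ and structurally in line with the statement. A second sensitive point is ensuring that in Step~2 the loss $2^{-\l}$ from each phase-space commutator is paired only with $L^2$-norms of $\mF_j\mP_\l$-localized quantities, rather than with full Sobolev norms, which is precisely the shape of the estimate provided by Lemma~\ref{PFCom}; the hypotheses must be checked separately in each of the four sub-cases inherited from $\mathscr{B}_1,\dots,\mathscr{B}_4$.
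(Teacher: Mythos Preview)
Your proposal is correct and follows the same two-step scheme as the paper: a frequency splitting of the commutator into the four blocks $\mathscr{B}'_1,\dots,\mathscr{B}'_4$, then a phase-space diagonalization exploiting the compact support of $a_{-1}$ together with the commutator bounds of Lemma~\ref{PFCom}. Your handling of the delicate block $\mathscr{B}_2$ via the mean-value gain $|\vf(2^{-j}\xi)-\vf(2^{-j}\eta)|\ls 2^{-j}|\xi-\eta|$ combined with $\|\F_\p a_{-1}\|_{L^2}\ls 2^{-\p/2}$ is exactly what the paper does (the paper packages the $|\xi-\eta|$ factor with $a_{-1}$ as $(\int|\widehat{\tF_\p a_{-1}}(\xi)|^2|\xi|^2d\xi)^{1/2}\ls 2^{\p/2}$, which is your computation reorganized). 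One small slip: you list \eqref{M2s} among the estimates applied to $\mathscr{B}_1,\mathscr{B}_3,\mathscr{B}_4$, but only \eqref{M4s}, \eqref{M1s}, \eqref{M3s} are actually needed for those three blocks respectively; \eqref{M2s} plays no role in this proof.
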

\begin{proof} We split the proof into several steps.

\smallskip

\underline{Step 1: Dyadic decomposition in frequency space.} Similar to the decomposition as \eqref{decom}, one may check that 
\[\ba
 \mathfrak{D}_2&=\sum_{\p\geq j+N_0}\br{Q_{-1}(\F_\p g,\tF_\p h),\F_j^2f} +\sum_{\p\leq j-N_0}\br{\F_jQ_{-1}(\F_\p g,\tF_jh)-Q_{-1}(\F_\p g,\F_jh),\F_jf}+\sum_{\a\leq j-N_0}\br{Q_{-1}(\tF_jg,\\
&\F_\a h),\F_j^2f}+\br{Q_{-1}(\tF_jg,\tF_jh),\F_j^2f}-\br{Q_{-1}(\tF_jg,\F_jh),\F_jf} :=\mathscr{B}'_1+\mathscr{B}'_2+\mathscr{B}'_3+\mathscr{B}'_4.
\ea\]

Observing that $\mathscr{B}'_1$, $\mathscr{B}'_3$ and $\mathscr{B}'_4$ have the same structure as $\fM^4_{-1,\p,j}$, $\fM^1_{-1,j,\a}$ and $\fM^3_{-1,j}$, respectively. Thus 
by \eqref{M4s},\eqref{M1s} and \eqref{M3s},
\beno
|\mathscr{B}'_1|+|\mathscr{B}'_3|+|\mathscr{B}'_4|&\ls&\sum_{\p\geq j+N_0}\LLT{\F_\p g}\LLT{\tF_\p h}\LLT{\F_j^2f}2^{\f32j}+\sum_{\a\leq j-N_0}\LLT{\tF_jg}\LLT{\F_\a h}\LLT{\F_j^2f}2^{\f32\a}\\&&+\LLT{\tF_jg}\LLT{\tF_jh}\LLT{\F_jf}2^{\f32j}.
\eeno
Now we estimate $\mathscr{B}'_2$. By Fourier transform, we deduce that
\[\ba
&\left|\br{\F_jQ_{-1}(\F_\p g,\tF_jh)-Q_{-1}(\F_\p g,\F_jh),\F_jf}\right|=\bigg|\iint\wh{a_{-1}}(\xi-\eta):\xi\mult(\xi-2\eta)\widehat{\F_{\p}g}(\xi-\eta)\widehat{\tF_jh}(\eta)\ov{\widehat{\F_jf}(\xi)}(\vf(2^{-j}\xi)\\&-\vf(2^{-j}\eta))d\eta d\xi\bigg| 
\ls \Big(\int |\wh{\tF_\p a_{-1}}(\xi)|^2|\xi|^2d\xi\Big)^{1/2}\LLT{\F_\p g}\LLT{\tF_jh}\LLT{\F_jf}2^j
\ls 2^{\f\p2}\LLT{\F_\p g}\LLT{\tF_jh}\LLT{\F_jf}2^{j},
\ea\]
which yields that 
\beno
|\mathscr{B}'_2|\ls \sum_{\p\leq j-N_0}2^{\p/2}\LLT{\F_\p g}\LLT{\tF_jh}\LLT{\F_jf}2^{j}.
\eeno
Then we conclude that
\ben\label{FjQ-1fre}
 |\mathfrak{D}_2|&\ls&\sum_{\p\geq j+N_0}\LLT{\F_\p g}\LLT{\tF_\p h}\LLT{\F_j^2f}2^{\f32j} 
+\sum_{\p\leq j-N_0}2^{\f\p2}\LLT{\F_\p g}\LLT{\tF_jh}\LLT{\F_jf}2^{j}\notag\\&&+\sum_{\a\leq j-N_0}\LLT{\tF_jg}\LLT{\F_\a h}\LLT{\F_j^2f}2^{\f32\a} 
+\LLT{\tF_jg}\LLT{\tF_jh}\LLT{\F_jf}2^{\f32 j}.
\een

\underline{Step 2: Dyadic decomposition in phase space.} Noticing that for $Q_{-1}$, we have that $|v-v_*|\ls 1$ which implies that $\<v\>\sim\<v_*\>$. Thus we have that 
\begin{equation*}
	\begin{aligned}
\mathfrak{D}_{2} 
&=\sum_{\l\geq-1}\br{\F_jQ_{-1}(\cP_\l g,\tP_\l h)-Q_{-1}(\cP_\l g,\F_j\tP_\l h),\F_j\tP_\l f}+\sum_{\l\geq-1}\big(\br{Q_{-1}(\cP_\l g,\tP_\l h),[\tP_\l,\F_j^2]f}\\
+&\br{Q_{-1}(\cP_\l g,[\F_j,\tP_\l] h),\F_j\tP_\l f}+\br{Q_{-1}(\cP_\l g,\tP_\l\F_j h),[\F_j,\tP_\l]f}\big)
:=\mathfrak{D}_{2}^1+\mathfrak{D}_{2}^2.
	\end{aligned}
\end{equation*}

Thanks to \eqref{FjQ-1fre}, it holds that
\[\ba
|\mathfrak{D}_{2}^1|&\ls\sum_{\l\geq-1}\sum_{\p\geq j+N_0}\LLT{\F_\p\cP_\l g}\LLT{\tF_\p\tP_\l h}\LLT{\F_j^2\tP_\l f}2^{\f32j} +\sum_{\l\geq-1}\sum_{\p\leq j-N_0}2^{\f{\p}{2}}\LLT{\F_\p\cP_\l g}\LLT{\tF_j\tP_\l h}\LLT{\F_j\tP_\l f}2^{j}\\
&+\sum_{\l\geq-1}\sum_{\a\leq j-N_0}\LLT{\tF_j\cP_\l g}\LLT{\F_\a \tP_\l h}\LLT{\F_j^2\tP_\l f}2^{\f32\a} +\sum_{\l\geq-1}\LLT{\tF_j\cP_\l g}\LLT{\tF_j\tP_\l h}\LLT{\F_j\tP_\l f}2^{\f32j}.
\ea\]
Since $\mathfrak{D}_{2}^2$ enjoys the similar structure as $\mathfrak{D}^2_{1,i}$ in the previous proposition, we omit the details and get that
\beno
&&|\mathfrak{D}_{2}^2|\ls_N \sum_{\l\geq-1}\sum_{\p\geq j+N_0}\LLT{\mF_\p\mP_\l g}\LLT{\mF_\p\mP_\l h}\LLT{\mF_j\mP_\l f}2^{\f j2}2^{-\l}
+\sum_{\l\geq-1}\sum_{\p\leq j-N_0}\LLT{\mF_\p\mP_\l g}2^{-\f{\p}{2}}\\
&&\times\LLT{\mF_j\mP_\l h}\LLT{\mF_j\mP_\l f}2^j2^{-\l}+\sum_{\l\geq-1}\sum_{\a\leq j-N_0}\LLT{\mF_j\mP_\l g}\LLT{\mF_\a\mP_\l h}\LLT{\mF_j\mP_\l f}2^{\f \a2}2^{-\l}\notag\\
&&+\sum_{\l\geq-1}\LLT{\mF_j\mP_\l g}\LLT{\mF_j\mP_\l h}\LLT{\mF_j\mP_\l f}2^{\f j2}2^{-\l}+2^{-jN}\|g\|_{L^2_{-N}}\|h\|_{L^2_{-N}}\HHLL{f}.
\eeno
Thus combining the estimates of $\mathfrak{D}_{2}^1$ and $\mathfrak{D}_{2}^2$, we conclude the desired result.
\end{proof}

\subsection{Commutator between $\cP_k$ and $Q$} Next, we estimate the commutator between $\cP_k$ and $Q$.  We also split it into two parts::
\ben\label{C1C2definition}
\notag\br{\cP_kQ(g,h)-Q(g,\cP_kh),f}=\underbrace{\sum_{\l\geq0}\br{\cP_kQ_\l(g,h)-Q_\l(g,\cP_kh),f}}_{:=\mathfrak{E}_1} 
+ \underbrace{ \br{\cP_kQ_{-1}(g,h)-Q_{-1}(g,\cP_kh),f}}_{:= \mathfrak{E}_2}.\\
\een

\subsubsection{Estimate of $\mathfrak{E}_1$} We start with the estimate of $\mathfrak{E}_1$.
\begin{prop} It holds that
	\ben\label{PkQ}
	|\mathfrak{E}_1|\ls_{N}\sum_{\a\geq-1} 2^{-2k}\|g\|_{L^1_2}\LLT{\mF_\a\mP_kh}\LLT{\mF_\a\mP_kf}2^\a
	+2^{-2k}\|g\|_{L^1_2}\HHL{\mP_kh}\HHL{\mP_kf},
	\een
where $N\in \N$ can be suitably large, and the localized operators $\mF,\mP$ are defined in Definition \ref{Fj}.
\end{prop}
\begin{proof}  \underline{Step 1:} We claim that if $\mathfrak{E}_1^\l:=\br{\cP_kQ_\l(g,h)-Q_\l(g,\cP_kh),f},$ then
\ben 
 |\mathfrak{E}_1^\l| 
\ls_N \sum_{\a\geq-1} 2^{-\l}2^{-k}\LLO{g}\LLT{\mF_\a h}\LLT{\mF_\a f}2^\a+2^{-\l}2^{-k}\LLO{g}\HHL{h}\HHL{f}.\label{PkQl} 
\een
Due to the dyadic decomposition in frequency space, we first have
\[\ba
&\br{\cP_kQ_\l(g,h)-Q_\l(g,\cP_kh),f} 
=\sum_{\a\leq \p-N_0}(\br{Q_\l(\F_\p g,\F_\a h),\tF_\p\cP_kf}- \br{Q_\l(\F_\p g,\F_\a \cP_kh),\tF_\p f})\\
&+\sum_{\a\geq-1}(\br{Q_\l(\tS_{\a-N_0} g,\F_\a h),\tF_\a \cP_kf}- \br{Q_\l(\tS_{\a-N_0} g,\F_\a\cP_k h),\tF_\a f})+\sum_{\p\geq -1}(\br{Q_\l(\F_\p g,\tF_\p h),\tF_\p\cP_kf}\\
&-\br{Q_\l(\F_\p g,\tF_\p\cP_kh),\tF_\p f}) +\sum_{\b\leq \p-N_0}\br{Q_\l(\F_\p g,\tF_\p h),\F_\b\cP_kf}- \br{Q_\l(\F_\p g,\tF_\p\cP_kh),\F_\b f}) :=\sum_{i=1}^4\mathscr{C}_i.
\ea\]
Thanks to \eqref{fm12} and \eqref{fm34}, we further have the decomposition as follows:
\[\ba
\mathscr{C}_1&=\sum_{\a\leq \p-N_0} (\Lambda_1(\tF_\p a_\l, \tF_\p c_\l,\F_{\p}g,\F_{\a}h, \cP_k\tF_{\p}f)-\Lambda_1(\tF_\p a_\l, \tF_\p c_\l,\F_{\p}g,\cP_k\F_{\a}h, \tF_{\p}f))+\sum_{\a\leq \p-N_0}\Lambda_1(\tF_\p a_\l,\\
& \tF_\p c_\l,\F_{\p}g,\F_{\a}h, [\tF_\p,\cP_k]f)+\sum_{\a\leq \p-N_0} \Lambda_1(\tF_\p a_\l, \tF_\p c_\l,\F_{\p}g, 
[\cP_k,\F_{\a}]h, \tF_\p f):=\mathscr{C}_1^1+\mathscr{C}_1^2+\mathscr{C}_1^3;\\
\mathscr{C}_2&=\sum_{\a\geq-1} (\Lambda_1( a_\l,   c_\l,\tS_{\a-N_0}g,\F_{\a}h, \cP_k\tF_{\a}f)-\Lambda_1(  a_\l,   c_\l,\tS_{\a-N_0}g,\cP_k\F_{\a}h, \tF_{\a}f))+\sum_{\a\geq-1}\Lambda_1(  a_\l,   c_\l,\tS_{\a-N_0}g,\\
&\F_{\a}h, [\tF_\a,\cP_k]f)+\sum_{\a\geq-1} \Lambda_1(  a_\l,   c_\l,\tS_{\a-N_0}g, 
[\cP_k,\F_{\a}]h, \tF_\a f) :=\mathscr{C}_2^1+\mathscr{C}_2^2+\mathscr{C}_2^3;\\
\mathscr{C}_3&=\sum_{\p\geq -1}(\Lambda_1(\tF_\p a_\l, \tF_\p c_\l,\F_{\p}g,\tF_{\p}h, \cP_k\tF_{\p}f)-\Lambda_1(\tF_\p a_\l, \tF_\p c_\l,\F_{\p}g,\cP_k\tF_{\p}h, \tF_{\p}f))+\sum_{\p\geq -1}\Lambda_1(\tF_\p a_\l, \tF_\p c_\l,\F_{\p}g,\\
&\tF_{\p}h, [\tF_\p,\cP_k]f)+\sum_{\p\geq -1} \Lambda_1(\tF_\p a_\l, \tF_\p c_\l,\F_{\p}g, 
[\cP_k,\tF_{\p}]h, \tF_\p f):=\mathscr{C}_3^1+\mathscr{C}_3^2+\mathscr{C}_3^3;\\
\mathscr{C}_4&=\sum_{\b\leq \p-N_0} (\Lambda_2(\tF_\p a_\l, \tF_\p b_\l,\F_{\p}g,\tF_{\p}h, \cP_k\F_{\b}f)-\Lambda_2(\tF_\p a_\l, \tF_\p b_\l,\F_{\p}g,\cP_k\tF_{\p}h, \F_{\b}f))+\sum_{\b\leq \p-N_0}\Lambda_2(\tF_\p a_\l, \tF_\p b_\l,\\
&\F_{\p}g, \tF_{\p}h, [\F_\b,\cP_k]f)+\sum_{\b\leq \p-N_0}\Lambda_2(\tF_\p a_\l, \tF_\p b_\l,\F_{\p}g,[\cP_k,\tF_\p]h, \F_\b f):=\mathscr{C}_4^1+\mathscr{C}_4^2+\mathscr{C}_4^3.
\ea\]

In the below, we only provide a detailed proof for $\mathscr{C}_1$ since the other terms can be treated similarly. Thanks to the computation that 
\beno
&&\Lambda_1( a_\l,   c_\l, g, h, \cP_k f)-\Lambda_1(  a_\l,   c_\l,g,\cP_k h, f)
=\int(a_\l*g):\Big(2^{-2k}\na^2\vf(2^{-k}v)hf+2^{1-k}\na\vf(2^{-k}v)\mult\na fh\Big)dv\notag\\
&&\qquad\qquad\qquad\qquad\qquad\qquad\qquad\qquad\qquad+2^{1-k}\int(b_\l*g)\cdot\na\vf(2^{-k}v)hfdv\\
 &&\quad\qquad\qquad\qquad\qquad\qquad\qquad=-\int(a_\l*g):\left(2^{-2k}\na^2\vf(2^{-k}v)hf+2^{1-k}\na\vf(2^{-k}v)\mult\na hf\right)dv.
\eeno
Then by the similar argument as \eqref{M1}, we can derive that 
\beno |\mathscr{C}_1^1| \ls_N 2^{-\l N}2^{-k}\LLO{g}\HHL{h}\HHL{f}.\eeno 
For $\mathscr{C}_1^2$ and $\mathscr{C}_1^3$, by \eqref{M1} and Lemma \ref{PFCom}, one has
\[\ba
|\mathscr{C}_1^2|+|\mathscr{C}_1^3|  
&\ls_{N}\sum_{\a\leq \p-N_0}2^{-\l N} 2^{-\p N}2^{-\a N}\LLO{g}\LLT{\F_\a h}\Big(\sum_{|\alpha|=1}^{2N}\LLT{\tF_{\p,\alpha}\cP_{k,\alpha}f}2^{-\p}2^{-k}+2^{-kN}2^{-\p N}\HHL{f}\Big)\\ &+\sum_{\a\leq \p-N_0}2^{-\l N}2^{-\p N}2^{-\a N}\LLO{g}\Big(\sum_{|\alpha|=1}^{2N}\LLT{\tF_{\a,\alpha}\cP_{k,\alpha}h}2^{-\a}2^{-k}+2^{-kN}2^{-\a N}\HHL{h}\Big)\LLT{\tF_\p f} \\
&\ls_{N}2^{-k}2^{-\l N}\LLO{g}\HHL{h}\HHL{f}.
\ea\]

We also mention that   
for $\mathscr{C}_2^2$, by \eqref{M2} and Lemma \ref{PFCom},
\[\ba
|\mathscr{C}_2^2|  
&\ls_{N}\sum_{\a \geq-1}2^{-\l}\LLO{g}\LLT{\F_\a h}\Big(\sum_{|\alpha|=1}^{2N}\LLT{\cP_{k,\alpha}\tF_{\a,\alpha}f}2^{-\a}2^{-k}+2^{-kN}2^{-\a N}\HHL{f}\Big)2^{2\a}\\
&\ls 2^{-\l}2^{-k}\LLO{g}\LLT{\F_\a h}\sum_{|\alpha|=1}^{2N}\LLT{\tF_{\a,\alpha}f}2^{\a}+2^{-\l}2^{-kN}\HHL{h}\HHL{f}.
\ea\]
These are enough to conclude \eqref{PkQl}. We remark that by the similar argument \eqref{PkQl} holds true for $k=-1$.

\smallskip 

\underline{Step 2.} Now we apply the decomposition in phase space.
\[\ba
\mathfrak{E}_1&=\sum_{\l\geq0}\br{\cP_kQ_\l(g,h)-Q_\l(g,\cP_kh),f}=\sum_{0\leq \l\leq k-N_0}\br{\cP_kQ_\l(\tP_kg,\tP_kh)-Q_\l(\tP_kg,\cP_k\tP_kh),\tP_kf}\\
&+\sum_{\l\geq k+N_0}\br{\cP_kQ_\l(\tP_\l g,\tP_kh)-Q_\l(\tP_\l g,\cP_k\tP_kh),\tP_kf}+\sum_{|\l-k|\leq N_0}\br{\cP_kQ_\l(\U_{k+2N_0}g,\tP_kh)\\
&-Q_\l(\U_{k+2N_0}g,\cP_k\tP_kh),\tP_kf}:=\mathfrak{E}_{1,1}+\mathfrak{E}_{1,2}+\mathfrak{E}_{1,3}.
\ea\]
Thanks to \eqref{PkQl}, we can deduce that
\[\ba
|\mathfrak{E}_{1,1}|&\ls_{N}\sum_{0\leq \l\leq k-N_0}\sum_\a 2^{-\l}2^{-k} \LLO{\tP_kg}\LLT{\mF_\a\tP_kh}\LLT{\mF_\a\tP_kf}2^\a+\sum_{0\leq \l\leq k-N_0}2^{-\l}2^{-k}\LLO{\tP_kg}\HHL{\tP_kh}\\
&\times\HHL{\tP_kf} \ls \sum_\a 2^{-3k}\|g\|_{L^1_2}\LLT{\mF_\a\tP_kh}\LLT{\mF_\a\tP_kf}2^\a+2^{-3k}\|g\|_{L^1_2}\HHL{\tP_kh}\HHL{\tP_kf}; \\
|\mathfrak{E}_{1,2}|&\ls_{N}\sum_{\l\geq k+N_0}\sum_\a 2^{-\l}2^{-k} \LLO{\tP_\l g}\LLT{\mF_\a\tP_kh}\LLT{\mF_\a\tP_kf}2^\a+\sum_{\l\geq k+N_0}2^{-\l}2^{-k}\LLO{\tP_\l g}\HHL{\tP_kh}\HHL{\tP_kf}\\
&\ls \sum_\a 2^{-3k}\|g\|_{L^1_2}\LLT{\mF_\a\tP_kh}\LLT{\mF_\a\tP_kf}2^\a+2^{-3k}\|g\|_{L^1_2}\HHL{\tP_kh}\HHL{\tP_kf}\\
|\mathfrak{E}_{1,3}|&\ls_{N}\sum_{|\l-k|\leq N_0}\sum_\a 2^{-\l}2^{-k} \LLO{g}\LLT{\mF_\a\tP_kh}\LLT{\mF_\a\tP_kf}2^\a 
+\sum_{|\l-k|\leq N_0}2^{-\l}2^{-k}\LLO{g}\HHL{\tP_kh}\HHL{\tP_kf}\\
&\ls\sum_\a 2^{-2k}\|g\|_{L^1}\LLT{\mF_\a\tP_kh}\LLT{\mF_\a\tP_kf}2^\a+2^{-2k}\|g\|_{L^1}\HHL{\tP_kh}\HHL{\tP_kf}.
\ea\] 
These imply our desired result and we end the proof.
\end{proof}

\subsubsection{Estimate of $\mathfrak{E}_2$} We now turn to the estimate of \(\mathfrak{E}_2\), recalling its definition in \eqref{C1C2definition}.
\begin{prop} It holds that
	\ben\label{PkQ-1}
	&&\notag|\mathfrak{E}_2|\ls_{N}\sum_{\a\leq \p-N_0}2^{-k}\LLT{\mF_\p\mP_kg}\LLT{\mF_\a\mP_k h}\LLT{\mF_\p\mP_k f}2^{\f{\a}{2}}
	+\sum_{\p\leq \a-N_0}2^{-k}2^{-\f{\p}{2}}\LLT{\mF_\p\mP_kg}\LLT{\mF_\a\mP_k h}\\
	&&\times\LLT{\mF_\a\mP_k f}2^\a
	+\sum_{\p\geq-1}2^{-k}\LLT{\mF_\p\mP_kg}\LLT{\mF_\p\mP_kh}\LLT{\mF_\p\mP_kf}2^{\f{\p}{2}}+\sum_{\b\leq \p-N_0}2^{-k}\LLT{\mF_\p\mP_kg}\LLT{\mF_\p\mP_kh}\notag\\
	&&\times\LLT{\mF_\b\mP_k f}2^{\f\b2}+\sum_{\p\geq-1}2^{-kN}\LLT{\mF_\p\mP_kg}\LLT{\mF_\p\mP_kf}\HHLL{\mP_kh}+\sum_{\p\geq-1}2^{-kN}\LLT{\mF_\p\mP_kg}\notag\\
	&&\times\LLT{\mF_\p\mP_kh}\HHLL{\mP_kf}
	+2^{-kN}\HHL{\mP_kg}\HHL{\mP_kh}\HHL{\mP_kf},
	\een
	where $N\in \N$ can be suitably large, and the localized operators $\mF,\mP$ are defined in Definition \ref{Fj}.
\end{prop}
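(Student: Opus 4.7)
The plan is to follow the same two-stage framework used for $\mathfrak{E}_1$ and $\mathfrak{D}_2$: first a frequency dyadic decomposition of $Q_{-1}$ via \eqref{Qkdecom}, then a phase dyadic decomposition to propagate the weight. The crucial extra input here is that $\cP_k$ does not commute with the convolution, so the ``principal'' part of the commutator is obtained by moving $\cP_k$ across $a_{-1}*g$, which produces derivatives of $\vf(2^{-k}\cdot)$ and hence the gain $2^{-k}$. The secondary commutators between $\cP_k$ and the frequency localizations $\tF_{\p}, \F_{\a}$ are handled with Lemma \ref{PFCom}.

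First I would write, for each fixed localization pattern in \eqref{Qkdecom},
\[
\cP_kQ_{-1}(\Box_1 g,\Box_2 h)-Q_{-1}(\Box_1 g,\Box_2\cP_kh)
= \mathscr{P}+\mathscr{R}_1+\mathscr{R}_2,
\]
where $\Box_1,\Box_2$ stand for $\F_\p,\tS_{\a-N_0},\tF_\p,\F_\a$ as in \eqref{Qkdecom}, $\mathscr{P}$ is the principal commutator obtained by the identity
\[
\ba
&\Lambda_1(a_{-1},c_{-1},g,h,\cP_kf)-\Lambda_1(a_{-1},c_{-1},g,\cP_kh,f) \\
&\quad = -\int (a_{-1}*g):\bigl(2^{-2k}\na^2\vf(2^{-k}v)\,hf+2^{1-k}\na\vf(2^{-k}v)\mult\na h\,f\bigr)\,dv,
\ea
\]
and $\mathscr{R}_1,\mathscr{R}_2$ absorb $[\tF_\p,\cP_k]$ and $[\cP_k,\F_\a]$ (or their $\tF$-analogues). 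For $\Lambda_2$-type contributions the analogous identity holds via integration by parts and produces the same gain $2^{-k}$.

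Next I apply the $Q_{-1}$ bounds \eqref{M1s}--\eqref{M4s} to $\mathscr{P}$, reading off an additional factor $2^{-k}$ coming from $\na\vf(2^{-k}\cdot)$ (or $2^{-2k}$ from $\na^2\vf(2^{-k}\cdot)$, which is absorbed). This delivers the four leading terms in \eqref{PkQ-1} with the frequency indices $2^{\a/2}, 2^{-\p/2}2^{\a}, 2^{\p/2}, 2^{\b/2}$ respectively. For $\mathscr{R}_1,\mathscr{R}_2$, I use Lemma \ref{PFCom} to exchange one power of $2^{-k}$ for either $\<v\>^{-1}$ (controlled by the $\mP_k$ localization) or a remainder of size $2^{-kN}\HHLL{\cdot}$; this produces the $H^{-N}_{-N}$ tail terms at the end of \eqref{PkQ-1}.

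Finally, I perform phase dyadic decomposition. Because $Q_{-1}$ enforces $|v-v_*|\ls 1$, one has $\<v\>\sim\<v_*\>$, so as in the proof for $Q_{-1}$ in Proposition \ref{UpperboundofQ} and in the $\mathfrak{D}_2$ proof, all three of $g,h,f$ can be simultaneously replaced by $\mP_k g,\mP_k h,\mP_k f$ (up to errors absorbed by $\HHLL{\cdot}$). This turns the frequency estimates into the $\mF_\p\mP_k,\mF_\a\mP_k,\mF_\b\mP_k$ form of \eqref{PkQ-1}. The main obstacle is bookkeeping: ensuring that when commutators $[\cP_k,\F_\p]$ distribute the derivative of $\vf(2^{-k}\cdot)$ onto either $h$ or $f$, the resulting $\<v\>^{-1}$ can always be paired with a matching $\mP_k$-localization and does not destroy the frequency gains already extracted from $Q_{-1}$. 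Once this matching is in place, summation in the frequency variables $\p,\a,\b$ is performed exactly as in the $\mathfrak{D}_2$ proof, and \eqref{PkQ-1} follows.
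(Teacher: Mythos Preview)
Your proposal is correct and follows essentially the same route as the paper: the paper also decomposes $\mathfrak{E}_2$ via \eqref{Qkdecom} into $\mathscr{C}'_1,\dots,\mathscr{C}'_4$, splits each piece into a principal commutator (handled via the identities \eqref{PkQ-1forf}--\eqref{PkQ-1forh}, which are exactly your $\Lambda_1$ computation and its $\Lambda_2$ analogue) plus two $[\cP_k,\F_\bullet]$ remainders treated by Lemma \ref{PFCom} together with \eqref{M1s}--\eqref{M4s}, and then invokes the $|v-v_*|\ls 1$ constraint of $Q_{-1}$ to localize all three functions to $\tP_k$.
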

\begin{proof} \underline{Step 1.} 
We first apply the decomposition to $\mathfrak{E}_2$ in frequency space (see \eqref{Qkdecom}).
\[\ba
\mathfrak{E}_2  
&=\sum_{\a\leq \p-N_0}(\br{Q_{-1}(\F_\p g,\F_\a h),\tF_\p\cP_kf}-\br{Q_{-1}(\F_\p g,\F_\a \cP_kh),\tF_\p f})+\sum_{\a\geq-1}(\br{Q_{-1}(\tS_{\a-N_0} g,\F_\a h),\tF_\a \cP_kf}\\&-\br{Q_{-1}(\F_\p g,\F_\a\cP_k h),\tF_\a f})
+\sum_{\p\geq -1}(\br{Q_{-1}(\F_\p g,\tF_\p h),\tF_\p\cP_kf}-\br{Q_{-1}(\F_\p g,\tF_\p\cP_kh),\tF_\p f})\\&+\sum_{\b\leq \p-N_0}\br{Q_{-1}(\F_\p g,\tF_\p h),\F_\b\cP_kf}-\br{Q_{-1}(\F_\p g,\tF_\p\cP_kh),\F_\b f}):
=\mathscr{C'}_1+\mathscr{C'}_2+\mathscr{C'}_3+\mathscr{C'}_4,
\ea\]
where
\[\ba
\mathscr{C'}_1&=\sum_{\a\leq \p-N_0}\br{\cP_kQ_{-1}(\F_\p g,\F_\a h)-Q_{-1}(\F_\p g,\cP_k\F_\a h),\tF_\p f}+\sum_{\a\leq \p-N_0}\br{Q_{-1}(\F_\p g,\F_\a h),[\tF_\p,\cP_k]f}\\
&+\sum_{\a\leq \p-N_0}\br{Q_{-1}(\F_\p g,[\cP_k,\F_\a]h),\tF_\p f}:=\mathscr{C'}_1^1+\mathscr{C'}_1^2+\mathscr{C'}_1^3;\\
\mathscr{C'}_2&=\sum_{\a\geq-1}\br{\cP_kQ_{-1}(\tS_{\a-N_0} g,\F_\a h)-Q_{-1}(\tS_{\a-N_0} g, \cP_k\F_\a h),\tF_\a f}+\sum_{\a\geq-1}\br{Q_{-1}(\tS_{\a-N_0} g,\F_\a h),[\tF_\a,\cP_k]f}\\
&+\sum_{\a\geq-1}\br{Q_{-1}(\tS_{\a-N_0} g,[\cP_k,\F_\a]h),\tF_\a f}:=\mathscr{C'}_2^1+\mathscr{C'}_2^2+\mathscr{C'}_2^3;
\ea\]
\[\ba
\mathscr{C'}_3&=\sum_{\p\geq -1}\br{\cP_kQ_{-1}(\F_\p g,\tF_\p h)-Q_{-1}(\F_\p g,\cP_k\tF_\p h),\tF_\p f}+\sum_{\p\geq -1}\br{Q_{-1}(\F_\p g,\tF_\p h),[\tF_\p,\cP_k]f}\\
&+\sum_{\p\geq -1}\br{Q_{-1}(\F_\p g,[\cP_k,\tF_\p]h),\tF_\p f}:=\mathscr{C'}_3^1+\mathscr{C'}_3^2+\mathscr{C'}_3^3;\\
\mathscr{C'}_4&=\sum_{\b\leq \p-N_0}\br{\cP_kQ_{-1}(\F_\p g,\tF_\p h)-Q_{-1}(\F_\p g,\cP_k\tF_\p h),\F_\b f}+\sum_{\b\leq \p-N_0}\br{Q_{-1}(\F_\p g,\tF_\p h),[\F_\b,\cP_k]f}\\
&+\sum_{\b\leq \p-N_0}\br{Q_{-1}(\F_\p g,[\cP_k,\tF_\p]h),\F_\b f}:=\mathscr{C'}_4^1+\mathscr{C'}_4^2+\mathscr{C'}_4^3.
\ea\]

In the sequel, we will choose $\mathscr{C'}_1^1,\mathscr{C'}_4^1$, $\mathscr{C'}_4^2$ and  $\mathscr{C'}_1^3$ as typical terms since the others can be treated similarly.  For $k\geq0$, we hava that
\ben\label{PkQ-1forf}
&&\br{\cP_kQ_{-1}(g,h)-Q_{-1}(g,\cP_kh),f}=\int_{\R^3}(a_{-1}*g):\left(2^{-2k}\na^2\vf(2^{-k}v)hf+2^{1-k}\na\vf(2^{-k}v)\mult\na fh\right)dv\notag\\
&&\qquad\qquad\qquad\qquad\qquad\qquad\qquad\qquad\qquad+2^{1-k}\int_{\R^3}(b_{-1}*g)\cdot\na\vf(2^{-k}v)hfdv\\
\label{PkQ-1forh}&&\qquad\qquad\qquad\qquad\qquad=\int_{\R^3}(a_{-1}*g):\left(2^{-2k}\na^2\vf(2^{-k}v)hf+2^{1-k}\na\vf(2^{-k}v)\mult\na hf\right)dv.
\een
 We note that $\mathscr{C'}_1^1$ and $\mathscr{C'}_4^1$ can be bounded from \eqref{PkQ-1forh} and \eqref{PkQ-1forf}, respectively.
 Indeed, we can deduce that
  \beno |\mathscr{C'}_1^1|+|\mathscr{C'}_4^1|\ls_{N}\sum_{\a\leq \p-N_0}2^{-k}\LLT{\mF_\p g}\LLT{\mF_\a h}\LLT{\mF_\p f}2^{\f\a2}+\sum_{\b\leq \p-N_0}2^{-k}\LLT{\mF_\p g}\LLT{\mF_\p h}\LLT{\mF_\b f}2^{\f\b2}. \eeno

 For $\mathscr{C'}_4^2$, by \eqref{M4s} and Lemma \ref{PFCom}, we  infer that
\[\ba
 |\mathscr{C'}_4^2|  
\ls_{N}\sum_{\b\leq \p-N_0}2^{-k}\LLT{\F_\p g}\LLT{\tF_\p h}\Big(\sum_{|\alpha|=1}^{2N}\LLT{\F_{\b,\alpha}f}\Big)2^{\f{\b}{2}}+\sum_{\p\geq-1}2^{-kN}\LLT{\F_pg}\LLT{\tF_ph}\HHLL{f}.
\ea\]

For $\mathscr{C'}_1^3$, by \eqref{M1s} and Lemma \ref{PFCom}, we deduce that
\[\ba
 |\mathscr{C'}_1^3| 
\ls_{N}\sum_{\a\leq \p-N_0}2^{-k}\LLT{\F_\p g}\Big(\sum_{|\alpha|=1}^{2N}\LLT{\F_{\a,\alpha}h}\Big)\LLT{\tF_\p f}2^{\f\a2}+\sum_{\p\geq-1}2^{-kN}\LLT{\F_\p g}\LLT{\tF_\p f}\HHLL{h}.
\ea\]

Patching together all above estimates, we conclude that 
\ben\label{E2Est}
 &&|\mathfrak{E}_2|\ls_{N}\sum_{\a\leq \p-N_0}2^{-k}\LLT{\mF_\p g}\LLT{\mF_\a h}\LLT{\mF_\p f}2^{\f\a2} 
+\sum_{\p\leq \a-N_0}2^{-k}2^{-\f\p2}\LLT{\mF_\p g}\LLT{\mF_\a h}\LLT{\mF_\a f}2^\a\notag\\&&+\sum_{\p\geq-1}2^{-k}\LLT{\mF_\p g}\LLT{\mF_\p h}\LLT{\mF_\p f}2^{\f\p2}
+\sum_{\b\leq \p-N_0}2^{-k}\LLT{\mF_\p g}\LLT{\mF_\p h}\LLT{\mF_\b f}2^{\f\b2}+\sum_{\p\geq-1}2^{-kN}\LLT{\mF_\p g}\notag\\
&&\quad\times\LLT{\mF_\p f}\HHLL{h}+\sum_{\p\geq-1}2^{-kN}\LLT{\mF_\p g}\LLT{\mF_\p h}\HHLL{f}+2^{-kN}\HHL{g}\HHL{h}\HHL{f}.
\een
We remark that the same estimate holds for $k=-1$ by the similar argument.

\underline{Step 2.} Since for $Q_{-1}$, we have that $\<v\>\sim\<v_*\>$, the following dyadic decomposition to $\mathfrak{E}_2$ in phase space holds:
\[\br{\cP_kQ_{-1}(g,h)-Q_{-1}(g,\cP_kh),f}=\br{\cP_kQ_{-1}(\tP_kg,\tP_kh)-Q_{-1}(\tP_kg,\cP_k\tP_kh)\tP_kf}.\]
Thus by \eqref{E2Est} and we can obtain the desired result and it ends the proof.
\end{proof}

\section{Proof of Theorem \ref{uniquethm}(1) for \texorpdfstring{$\sss>0$}~~and Theorem \ref{T1} (1.1) for \texorpdfstring{$\mathsf{r}\in(-\f12,0]$}.}\label{section3}
With the previous estimates in hand, we initiate the proof of certain results from Theorem \ref{uniquethm} and Theorem \ref{T1} in this section. We start by addressing the local existence in the space $H^{-\frac{1}{2}, \sss}_l$ with $\sss > 0, l\geq3$.

\begin{proof}[Proof of Theorem \ref{uniquethm}(1) for $\sss>0$] We will split the proof into several steps.

\smallskip
	
\underline{Step 1: The localized equation from weak solutions.} Since $f_0\in L^1_{2l+1}\cap L\log L$, then due to \cite{D} and \cite{Vi}, there exists a $H-$solution or weak solution such that
\ben\label{l1l3}
f\in L^\infty([0,1],L^1_{2l+1})\cap L^1([0,1],L^3_{-3}).
\een

Recall the definition of weak solutions in \cite{D} for Landau-Coulomb equation, for any $0\leq s<t\leq 1$, choose $$\cP_k\F_j^2\cP_kf(s),\cP_k\F_j^2\cP_kf(t)\in C^\infty_c(\R^3)$$ as test functions, we can derive that
\beno
&&\int_{\R^3} f(t)\cP_k\F_j^2\cP_kf(t)dv-	\int_{\R^3} f(s)\cP_k\F_j^2\cP_kf(t)dv=\int_s^t \<Q(f,f)(\tau,\cdot),	 \cP_k\F_j^2\cP_kf(t)\>_vd\tau;\\
&&\int_{\R^3} f(t)\cP_k\F_j^2\cP_kf(s)dv-	\int_{\R^3} f(s)\cP_k\F_j^2\cP_kf(s)dv=\int_s^t \<Q(f,f)(\tau,\cdot),	 \cP_k\F_j^2\cP_kf(s)\>_vd\tau.
\eeno
Taking sum of both equations, yields 
\ben\label{006}
\|\F_j\cP_kf(t)\|^2_{L^2}-\|\F_j\cP_kf(s)\|^2_{L^2}=\int_s^t \<Q(f,f)(\tau,\cdot),	 \cP_k\F_j^2\cP_kf(t)+\cP_k\F_j^2\cP_kf(s)\>_vd\tau.
\een
Thanks to the proof of Corollary 1.1 in \cite{D}, we deduce from \eqref{006} that
\begin{equation}\label{007}
	\begin{aligned}
\big|\|\F_j\cP_kf(t)\|^2_{L^2}-\|\F_j\cP_kf(s)\|^2_{L^2}\big|&\ls\sup_{\tau\in [s,t]}\|f(\tau)\|_{L^1_2}\int_{s}^t\|f(\tau)\|_{L^3_{-3}}d\tau \|\cP_k\F_j^2\cP_k( f(s)+f(t))\|_{H^4}\\
&\ls_{k,j}\sup_{\tau\in [s,t]}\|f(\tau)\|^2_{L^1_2}\int_{s}^t\|f(\tau)\|_{L^3_{-3}}d\tau,
	\end{aligned}
	\end{equation}
where we use the fact $\|\cdot\|_{C^2}\ls \|\cdot\|_{H^4}$ and Lemma \ref{7.8} in the last step. Observing that $f\in L^1([0,1],L^3_{-3}(\R^3))$, thus $\|\F_j\cP_k f(t)\|_{L^2}$ is a continuous function with respect to \( t \). In fact, it is differentiable almost everywhere. By the same argument, choose $\cP_k \F_j\phi$  as the test function with $\phi\in L^2(\R^3)$, we can deduce that $\<\F_j\cP_k f(t),\phi\>$ is also continuous with respect to \( t \).
Therefore, by Randon-Riesz Theorem, we have that
\ben\label{010}
\F_j\cP_kf\in C([0,1],L^2(\R^3)).
\een

Now multiply $(t-s)^{-1}$ on both side of \eqref{006} and let $t\rightarrow s$. Use \eqref{010}, we have
\beno
&&\f1{t-s}\int_s^t\<Q(f,f)(\tau,\cdot),\cP_k\F_j^2\cP_k(f(t)+f(s)-2f(\tau))\>_v d\tau
\ls_{k,j} \sup_{\tau\in [s,t]}\|f(\tau)\|_{L^1_2}\\
&&\times\f1{t-s}\int_{s}^t\|f(\tau)\|_{L^3_{-3}}d\tau\sup_{\tau\in[s,t]}(\|\F_j\cP_k(f(t)-f(\tau))\|_{L^2}+\|\F_j\cP_k(f(s)-f(\tau))\|_{L^2})\rightarrow 0
\eeno
as $t\rightarrow s$. Then it holds that
\ben\label{005}
\f12 \f d{dt}\|\F_j\cP_kf\|^2_{L^2}=\<Q(f,f), \cP_k\F_j^2\cP_kf\>_v,
\een
or equivalently,
\beno
\|\F_j\cP_k f(t)\|^2_{L^2}-\|\F_j\cP_k f(s)\|^2_{L^2}=2\int_s^t\<Q(f,f),\cP_k\F_j^2\cP_k f\>_{v}(\tau) d\tau,\quad \mbox{for}\quad 0\leq s<t\leq 1.
\eeno
Henceforth, we will utilize \eqref{005} for our energy estimates.

\underline{Step 2: Energy estimates from the localized equation.} Starting from \eqref{005} and the definition of collision operator $Q$, we can further derive that
	\[\ba
	&\f{1}{2}\f{d}{dt}\LLT{\F_j\cP_kf}^2+\int_{\R^3}(a*f):\na\F_j\cP_kf\mult\na\F_j\cP_kfdv\\
	=&4\pi\int_{\R^3} f|\F_j\cP_kf|^2dv+\br{\F_j\cP_kQ(f,f)-Q(f,\F_j\cP_kf),\F_j\cP_kf}.
	\ea\]
	For the coercivity term, thanks to Proposition \ref{coer}, Lemma \ref{PFCom} and Lemma \ref{profileofHSk}, one can check that
	\[\ba
    &\int_{\R^3}(a*f):\na\F_j\cP_kf\mult\na\F_j\cP_kfdv \geq \kappa \LLT{\F_j\cP_kf}^22^{-3k}2^{2j}-C_N\left(\LLT{\mF_j\mP_kf}^22^{-3k}+2^{-kN}2^{-jN}\HHLL{f}^2\right)
	\ea\]
	for some $\kappa>0$. For the first term on the right hand side, by \eqref{gFjkhf}, we have that
	\beno
	4\pi\int_{\R^3} f|\F_j\cP_kf|^2dv\ls_N \sum_{m\leq j+N_0}\LLT{\mF_m\mP_kf}2^{\f32m}\LLT{\mF_j\mP_kf}^2+\HHLL{f}\LLT{\mF_j\mP_kf}^2+2^{-jN}2^{-kN}\HHLL{f}^3.
	\eeno
	For the second term, we observe the following decomposition
	\ben\label{decomofcom}
	&&\notag\br{\F_j\cP_kQ(f,f)-Q(f,\F_j\cP_kf),\F_j\cP_kf}\\
	&=&\br{\F_jQ(f,\cP_kf)-Q(f,\F_j\cP_kf),\F_j\cP_kf}+\br{\cP_kQ(f,f)-Q(f,\cP_kf),\F_j^2\cP_kf}.
	\een
	Thus by virtue of  \eqref{FjQ}\eqref{FjQ-1}\eqref{PkQ} and \eqref{PkQ-1}, we can deduce that
	\beno 
	&&\nr{\br{\F_j\cP_kQ(f,f)-Q(f,\F_j\cP_kf),\F_j\cP_kf}}\ls_N\sum_{\p\geq j+N_0}\LLT{\mF_\p\mP_kf}^2\LLT{\mF_j\mP_kf}2^{\f32j}+\sum_{\p\leq j+N_0}\|\mF_\p\mP_kf\|_{L^2}2^{\f\p2}\\
	&&\times\|\mF_j\mP_kf\|^2_{L^2}2^j+2^{-2k}\|f\|_{L^1_2}\|\mF_j\mP_kf\|^2_{L^2}2^j+2^{-2k}2^{-jN}\|f\|_{L^1_2}\|\mP_kf\|^2_{H^{-N}}+\sum_{\p\leq j+N_0}2^{-k}\|\mF_\p\mP_k f\|_{L^2}2^{-\f\p2}\\
	&&\times \|\mF_j\mP_kf\|^2_{L^2}2^j+\sum_{\p\geq j+N_0}2^{-k}\|\mF_\p\mP_kf\|^2_{L^2}\|\mF_j\mP_kf\|_{L^2}2^{\f j 2}+2^{-kN}\|f\|_{H^{-N}_{-N}}(\|\mF_j\mP_kf\|^2_{L^2}+2^{-jN}\|f\|^2_{L^2_{-N}}).
    \eeno 
	 Combining the above estimate and after some appropriate simplification, we obtain that
\ben\label{energylocalized}
	&&\notag\f{1}{2}\f{d}{dt}\LLT{\F_j\cP_kf}^2+\kappa\LLT{\F_j\cP_kf}^22^{-3k}2^{2j}\ls_N\sum_{\p\geq j+N_0}\LLT{\mF_\p\mP_kf}^2\LLT{\mF_j\mP_kf}2^{\f32j}+\sum_{\p\leq j+N_0}\LLT{\mF_\p\mP_kf}2^{\f{\p}{2}}\\
	&&\times\notag\LLT{\mF_j\mP_kf}^22^j
	+2^{-2k}\|f\|_{L^1_2}\LLT{\mF_j\mP_kf}^22^j+2^{-2k}2^{-jN}\|f\|_{L^1_2}\HHL{\mP_kf}^2+2^{-jN}2^{-kN}\|f\|_{L^2_{-N}}^2\HHLL{f}\\
	&&+\HHLL{f}\LLT{\mF_j\mP_kf}^2+\LLT{\mF_j\mP_kf}^22^{-3k}+2^{-jN}2^{-kN}\LLO{f}\HHLL{f}^2.
\een

We take \( l = 3 \) as a typical case and other cases $l>3$ can be proven similarly. To estimate the energy \(\|f\|_{ H^{-\frac{1}{2}, \sss}_3 }\), according to \eqref{Ber}, we should consider the sum \( \sum_{j,k} \|\F_j \mathcal{P}_k f\|_{L^2}^2 2^{-j} j^{2\sss} 2^{6k} \). We emphasize that, for the sake of rigor, we should actually multiply $j^{2\sss}2^{-j}2^{6k}/(1+\delta 2^k)^6(1+\delta j^{2\sss}), 0<\delta\ll 1$ on both side of \eqref{energylocalized} that makes the summation respect to $j,k$ valid because \( f(t) \in H^{-\f12} \) a.e. from \eqref{l1l3}. Since the proof that follows does not depend on the parameter \( \delta \), we can ultimately let \( \delta \rightarrow 0 \) to obtain the desired result, one may see the proof of Lemma 3.1 and Lemma A.6 in \cite{HJ}. To keep the paper concise, we will omit the parameter \( \delta \) here and in the subsequent sections.

Multiply $2^{-j}2^{2\sss}2^{6k}$ on both side of \eqref{energylocalized} and sum up with respect to $j,k$. Thanks to Lemma \ref{profileofHSk}\eqref{Ber}, it firstly holds that
	\ben\label{b001}
	\sum_{j,k}\LLT{\F_j\cP_kf}^2j^{2\sss}2^{-j}2^{6k}\sim\|f\|_{H^{-\f12,\sss}_{3}}^2,\quad \sum_{j,k}\LLT{\F_j\cP_kf}^2j^{2\sss}2^{2(-\f12+1)j}2^{2(3-\f{3}{2})k}\sim\|f\|_{H^{\f12,\sss}_{\f32}}^2.
	\een
Now we deal with the first term on the right hand side of \eqref{energylocalized}. We have that
\beno
&&\sum_{j,k}\sum_{\p\geq j+N_0}\LLT{\mF_\p\mP_kf}^2\LLT{\mF_j\mP_kf}2^{\f12j}j^{2\sss}2^{6k}\\
	&=&\sum_{\p,k}\sum_{j\leq \p-N_0}\LLT{\mF_j\mP_kf}2^{-\f12j}j^{\sss}\LLT{\mF_\p\mP_kf}^22^{\p}\p^{2\sss}2^{(j-\p)}\p^{-2\sss}j^{\sss}2^{6k}\\
    &=&\sum_{\p>M}\sum_{k,j\leq \p-N_0}+\sum_{\p\leq M}\sum_{k,j\leq \p-N_0}
    \leq C M^{-s}\|f\|_{H^{-\f12,\sss}_{3}}\|f\|_{H^{\f12,\sss}_{\f{3}{2}}}^2+C_M\|f\|_{H^{-\f12,\sss}_{3}}\|f\|_{L^{2}_{\f{3}{2}}}^2,
\eeno
where we can choose $M$ suitably large.

Similarly, for the second term, it holds that
\beno
	&&\sum_{j,k}\sum_{\p\leq j+N_0}\LLT{\mF_\p\mP_kf}2^{\f{\p}{2}}\LLT{\mF_j\mP_kf}^2j^{2\sss}2^{6k}\\
	&=&\sum_{j,k}\sum_{\p\leq j+N_0}\LLT{\mF_\p\mP_kf}\p^\sss2^{-\f\p 2}\LLT{\mF_j\mP_kf}^22^{j}j^{2\sss}2^{(\p-j)}\p^{-\sss}2^{6k}\\
	&\leq&\sum_{\p >M}\sum_{k\geq-1,j\geq \p-N_0}+\sum_{\p \leq M}\sum_{k\geq-1,j\geq \p-N_0}\leq CM^{-\sss}\|f\|_{H^{-\f12,\sss}_{3}}\|f\|_{H^{\f12,\sss}_{\f32}}^2+C_M\|f\|_{H^{-\f12,\sss}_{3}}\|f\|_{H^{0,\sss}_{\f32}}^2.
\eeno

For the third term, noticing that $\|f\|_{L^1_2}$ is bounded, then we have that
\beno
\sum_{j,k}\|f\|_{L^1_2}\LLT{\mF_j\mP_kf}^2j^{2
\sss}2^{4k}\ls \|f\|_{H_{2}^{0,\sss}}^2.
\eeno

The rest terms can be handled by the similar argument and we omit the details to obtain that
 \beno
 \mathrm{The~rest~term}\ls \|f\|_{H^{-\f12,\sss}_{3}}\|f\|_{H^{0,\sss}_{\f{3}{2}}}^2+\|f\|^2_{H^{-\f12,\sss}_{3}}+\|f\|_{H_{2}^{0,\sss}}^2.
 \eeno

By interpolation inequalities, we have that
\beno
\|f\|_{H^{0,\sss}_2}\ls_\sss \|f\|^{\f45}_{H^{\f12,\sss}_{\f32}}\|f\|^{\f15}_{L^1_7}\leq \eps \|f\|_{H^{\f12,\sss}_{\f32}}+C_\eps \|f\|_{L^1_7}.
\eeno

 Thus combining all above estimates and denote
 \beno
  X(t):=\sum_{j,k}2^{-j}j^{2s}2^{6k}\|\F_j\cP_kf\|^2_{L^2},\quad  Y(t):=\sum_{j,k}2^{j}j^{2s}2^{3k}\|\F_j\cP_kf\|^2_{L^2},
  \eeno
   we can derive that
 \ben\label{ODIE}
 X'(t)+(\kappa-(CM^{-\sss}+\eps C_M)X^{\f12}(t))Y(t)\leq C_{M,\eps}(1+X(t)).
 \een
Recall that $\sss>0$, then by choosing sufficiently large $M$ and small $\eps$ such that
\beno
(CM^{-\sss}+\eps C_M)X^{\f12}(0)\leq \f \kappa 4.
\eeno
 It can be easily deduced from \eqref{ODIE} that there exists $T>0$ depending on $X(0),M$ and $\eps$ such that 
\beno
\sup_{t\in[0,T]}X(t)+\int_0^TY(\tau)d\tau\ls X(0).
\eeno
Noticing that $X\sim_\sss \|f\|^2_{H^{-\f12,\sss}_3}$ and $Y(t)\sim_\sss \|f\|^2_{H^{-\f12,\sss}_{\f32}}$ by \eqref{b001}, we get that 
\ben\label{011}
f\in L^\infty([0,T];H^{-\f12,\sss}_3)\cap L^2([0,T];H^{\f12,\sss}_{\f32}).
\een

\underline{Step 3: Proof of the continuity.} Now, we will address the issue of \(f \in C([0,T]; H^{-\frac{1}{2}, \sss}_3)\). Since we already have that continuity of function $X(t)\sim_\sss \|f(t)\|^2_{H^{-\f12,\sss}_3}$ for $t\in[0,T]$, thus by Randon-Riesz Theorem, we only need to show the continuity of function $\<f(t),\phi\>$ with $\phi\in H^{\f12,-\sss}_{-3}$ which is the dual space of $H^{-\f12,\sss}_{3}$. 

Indeed, by the same argument as \eqref{010}, the continuity holds true for any $\phi\in C^\infty_c$. If $\phi\in H^{\f12,-\sss}_{-3}$, choose $\{\phi_n\}\subset C_c^\infty$ such that $\phi_n\rightarrow \phi$ in $H^{\f12,-\sss}_{-3}$, then 
\beno
\lim_{t\rightarrow s}(f(t)-f(s),\phi)&=&\lim_{t\rightarrow s}(f(t)-f(s),\phi_n)+\lim_{t\rightarrow s}(f(t)-f(s),\phi-\phi_n)\\
&\leq&\lim_{t\rightarrow s}(f(t)-f(s),\phi_n)+2 \sup_{t\in[0,T]}\|f(t)\|_{H^{-\f12,\sss}_3}\|\phi-\phi_n\|_{H^{\f12,-\sss}_{-3}}.
\eeno
Therefore from \eqref{011}, the above limit is $0$ if we take the sufficiently large \( n \) and \( t \) approaches \( s \). 

We end the proof of Theorem \ref{uniquethm}(1) for $\sss>0$.

\end{proof}
\medskip
Now we begin to prove Theorem \ref{T1}(1.1) and we first handle with the case $\mathsf{r}>-\f12$.
\begin{proof}[Proof of Theorem \ref{T1}(1.1) for $\mathsf{r}>-\f12$]
Recall that $f\in C([0,\infty);H^{\mathsf{r}}_l)\cap L^2_{loc}([0,\infty);H^{\mathsf{r}+1}_{l-\f32})$
with $\mathsf{r}\in (-\f12,0]$ and $l\geq \f94-\f32\mathsf{r}$, it follows that \eqref{005}  and \eqref{energylocalized} are satisfied. For $\nn\geq \mathsf{r}$ and $\ell \leq l-\f32 \nn+\f32\mathsf{r}$, 
we multiply $2^{2j\nn}2^{2k\ell}$ on both sides of \eqref{energylocalized} and sum up with respect to $j$ and $k$. By \eqref{Ber}, it holds that
	\[\ba
    \sum_{j,k}\LLT{\F_j\cP_kf}^22^{2j\mathrm{n}}2^{2k\ell}\sim_{\nn,\ell}\|f\|_{H^{\nn}_{\ell}}^2,\quad \sum_{j,k}\LLT{\F_j\cP_kf}^22^{2(\mathrm{n}+1)j}2^{2(\ell-\f{3}{2})k}\sim_{\nn,\ell}\|f\|_{H^{\nn+1}_{\ell-\f32}}^2.
	\ea\]
 Similar to the estimates in the previous proof, we begin with the first term on the right hand side of  \eqref{energylocalized}, it holds that
\beno
&&\sum_{j,k}\sum_{\p\geq j+N_0}\LLT{\mF_\p\mP_kf}^2\LLT{\mF_j\mP_kf}2^{(2\nn+\f32) j}2^{2\ell k}
	=\sum_{\p,k}\sum_{j\leq \p-N_0}\LLT{\mF_j\mP_kf}2^{lk+\mathsf{r}j}\\
	&&\times\LLT{\mF_\p\mP_kf}^22^{(2\nn+\f32-\mathsf{r})\p}2^{(2\ell-l)k}2^{(2\nn+\f32-\mathsf{r})(j-\p)}
    \ls\|f\|_{H^{\mathsf{r}}_{l}}\|f\|_{H^{\nn+\f{3}{4}-\f{\mathsf{r}}2}_{\ell-\f{l}{2}}}^2.
\eeno
 For the second term, we have that
\beno
	&&\sum_{j,k}\sum_{\p\leq j+N_0}\LLT{\mF_\p\mP_kf}2^{\f{\p}{2}}\LLT{\mF_j\mP_kf}^22^{(2\nn+1)j}2^{2k\ell}
	\ls\sum_{j,k}\sum_{\p\leq j+N_0}\LLT{\mF_\p\mP_kf}2^{lk+\mathsf{r}\p}\\
	&&\times\LLT{\mF_j\mP_kf}^22^{(2\nn+\f32-\mathsf{r})j}2^{(\p-j)(\f12-\mathsf{r})}2^{(2\ell-l)k}
    \ls\|f\|_{H^{\mathsf{r}}_{l}}\|f\|_{H^{\nn+\f{3}{4}-\f{\mathsf{r}}2}_{\ell-\f{l}{2}}}^2.
\eeno
For the rest terms, choose $N=2\nn+2|\ell|+3$, then by the similar argument, we can derive that
 \beno
 \mathrm{The~rest~term}\ls \|f\|_{H^{\mathsf{r}}_{l}}\|f\|_{H^{\nn+\f{3}{4}-\f{\mathsf{r}}2}_{\ell-\f{l}{2}}}^2+\|f\|^2_{H^{\nn+\f12}_{\ell-1}}+\|f\|^2_{H^{\nn}_{\ell}}.
 \eeno
 
By interpolation, we have that
\ben\label{interpo}
&&\|f\|_{H^{\nn+\f{3}{4}-\f{\mathsf{r}}2}_{\ell-\f{l}{2}}}\ls_{\nn,\ell} \|f\|^{\theta_1}_{H^{\mathsf{r}}_l}\|f\|^{1-\theta_1}_{H^{\nn+1}_{\ell-\f32}},~~\|f\|_{H^{\nn+\f12}_{\ell-1}}\ls_{\nn,\ell}\|f\|^{\theta_2}_{H^{\mathsf{r}}_l}\|f\|^{1-\theta_2}_{H^{\nn+1}_{\ell-\f32}},\\
&&\notag\|f\|_{H^{\nn}_{\ell}}\ls_{\nn,\ell}\|f\|^{\theta_3}_{H^{\mathsf{r}}_l}\|f\|^{1-\theta_3}_{H^{\nn+1}_{\ell-\f32}}\quad \mbox{with}\quad\theta_1=\f{\f14+\f{\mathsf{r}}2}{n+1-\mathsf{r}},~~\theta_2=\f{\f12}{n+1-\mathsf{r}},~~\theta_3=\f{1}{n+1-\mathsf{r}},
\een
where we use the fact that $\ell\leq l-\f32\nn+\f32\mathsf{r}$ and $l\geq \f94-\f32\mathsf{r}.$ Thus denote 
\ben\label{017}
X_{\nn,\ell}(t):=\sum_{j,k}2^{2\nn j}2^{2\ell k}\|\F_j\cP_kf\|^2_{L^2},\quad Y_{\nn,\ell}(t):=\sum_{j,k}2^{2(\nn+1) j}2^{(2\ell-3)k}\|\F_j\cP_kf\|^2_{L^2},
\een
we can derive that
\beno
X_{\nn,\ell}'(t)+(\kappa-\eps-\eps X^{\f12}_{\mathsf{r},l}(t))Y_{\nn,\ell}(t)\ls_\eps  X^{\f32}_{\mathsf{r},l}(t)+X_{\mathsf{r},l}(t),
\eeno
where $\eps$ can be sufficiently small. Since $f\in C([0,\infty);H^{\mathsf{r}}_l)$ and $X_{\mathsf{r},l}\sim_{\mathsf{r},l}\|f\|^2_{H^{\mathsf{r}}_l}$. For any $T>0$, by choosing $\eps<\f\kappa 4\min\Big\{1,\big(\sup_{t\in[0,T]}X_{\mathsf{r},l}\big)^{-\f12}\Big\}$, we can deduce  that
\beno
X_{\nn,\ell}'(t)+\f\kappa 4X_{\nn,\ell}(t)^{1+\f1{\nn-\mathsf{r}}}\leq C,
\eeno
where $C$ depends on $\sup_{t\in[0,T]}\|f\|_{H^{\mathsf{r}}_l}$. It leads to 
\ben\label{0001}
\|f(t)\|_{H^\nn_\ell}\sim_{\nn,\ell} X_{\nn,\ell}^{\f12}(t)\ls_{\nn,\ell} t^{-\f{\nn}{2}+\f{\mathsf{r}}2}+1,\quad t\in[0,T].
\een
Due to the arbitrariness of \( T \), we end the proof of Theorem \ref{T1}(1.1) for $\mathsf{r}>-\f12$.

\end{proof}

At the end of this section, we present the result concerning the propagation of regularity, which will be used for the proof in the critical space in Section \ref{section4}.
\begin{prop}\label{018}
For any $\nn\geq2, \ell\geq3$, if $\|f_0\|_{H^\nn_\ell}<\infty$, then there exists $T>0$ depending solely on $\nn,\ell$ and $\|f_0\|_{H^\nn_\ell}$ such that the weak solution to \eqref{1} with initial data $f_0$ satisfies
\beno
\sup_{t\in[0,T]}\|f(t)\|_{H^\nn_\ell}\ls_{\nn,\ell} 2\|f_0\|_{H^\nn_\ell}.
\eeno
\end{prop}
\begin{proof}
Starting again from \eqref{energylocalized}, we multiply $2^{2j\nn}2^{2k\ell}$ on both sides of \eqref{energylocalized} and sum up with respect to $j$ and $k$. For the first term on the right hand side of \eqref{energylocalized}, we have 
\beno
\sum_{j,k}\sum_{\p\geq j+N_0}\LLT{\mF_\p\mP_kf}^2\LLT{\mF_j\mP_kf}2^{(2\nn+\f32) j}2^{2\ell k}
\ls\|f\|^3_{H^{\nn}_{\ell}}.
\eeno
where use the fact that $n\geq2$. Similarly, for the second term, we have 
\beno
\sum_{j,k}\sum_{\p\leq j+N_0}\LLT{\mF_\p\mP_kf}2^{\f{\p}{2}}\LLT{\mF_j\mP_kf}^22^{(2\nn+1)j}2^{2k\ell}
\ls\|f\|^3_{H^{\nn}_{\ell}}.
\eeno

For the rest term, choose $N=2\nn+2|\ell|+3$, we can derive that
 \beno
\mathrm{The~rest~term}\ls \|f\|^2_{H^{\nn+\f12}_{\ell-1}}+\|f\|^2_{H^\nn_\ell}\leq \eps \|f\|^2_{H^{\nn+1}_{\ell-\f32}}+C_\eps\|f\|^2_{H^\nn_\ell}.
\eeno
where we use interpolation inequality $\|f\|_{H^{\nn+\f12}_{\ell-1}}\ls_{\nn,\ell}\|f\|^{\f12}_{H^{\nn+1}_{\ell-\f32}}\|f\|^{\f12}_{H^{\nn}_\ell}$ for $\ell\geq3$.
Therefore, by choosing $\eps$ suitably small, we can obtain that
\beno
X'_{\nn,\ell}(t)+\f \kappa 4 Y_{\nn,\ell}(t)\ls_{\nn,\ell} X^{\f32}_{\nn,\ell}(t),
\eeno
where $X_{\nn,\ell}$ and $Y_{\nn,\ell}$ are defined in \eqref{017}. It leads to 
\beno
X_{\nn,\ell}(t)\ls_{\nn,\ell}(X^{-\f12}_{\nn,\ell}(0)-C_{\nn,\ell}t)^{-2},
\eeno
which yields that $X_{\nn,\ell}(t)\ls_{\nn,\ell}2X_{\nn,\ell}(0)$ for $t\in[0,X_{\nn,\ell}^{-\f12}(0)/(10C_{\nn,\ell})]$.
Then we conclude the desired result.
\end{proof}

\section{Proof of Theorem \ref{uniquethm}(1) for \texorpdfstring{$\sss=0$}~~and Theorem \ref{T1} (1.1) for \texorpdfstring{$\mathsf{r}=-\f12$}.}\label{section4}
In this section, we are committed to proving the local existence and regularity in the critical space \( H^{-\frac{1}{2}}_l\) with \(l\geq3 \). Unlike the direct energy estimates in the previous section, we need to decompose the initial value into a regular part and an irregular but small part. 

 \begin{proof}[Proof of Theorem \ref{uniquethm}(1) for $\sss=0$]
We also prove it for $l=3$ and the case $l>3$ can be handled by the same method.   Let $J_\delta(v)=\f{1}{\delta^3}J(\f{v}{\delta})$ be the Fedrich mollifier, that is $J(v)\in C^\infty_c(\R^3)$ is a nonnegative function satisfying $\int_{\R^3} J(v)dv=1$. Choose $\delta$ sufficiently small such that $\|f_0-J_\delta*f_0\|_{H^{-\f12}_3\cap L^1_7}<\eps_1$, where $\eps_1$  will be determined later(see \eqref{019}). Then $J_\delta*f_0$ is a nonnegative function that belongs to $L^1_{7}\cap H^3_3$. 
 Thus by the previous local well-posedness argument, Proposition \ref{018} and Proposition 2 in \cite{D}, there exists $T_1>0$ and a solution 
\ben\label{0002}
 f_1\in L^\infty([0,T_1],L^1_7\cap H^3_3)\cap C([0,T_1],H^{-\f12}_3)\cap L^2([0,T_1],H^{\f12}_{\f32})
\een
  of \eqref{1} with initial data $f_1(0)=J_\delta*f_0$.  Now let $f_2=f-f_1$, where $f$ is the weak solution of Landau equation \eqref{1} with initial data $f(0)=f_0$,  then the following equation holds in weak sense:
    \beno
    \pa_tf_2=Q(f,f_2)+Q(f_2,f_1),\quad f_2(0)=f_0-J_\delta*f_0,~~t\in[0,T_1].\eeno
    We now prove that there exists $0<T_2\leq T_1$ such that $\|f_2(t)\|_{H^{-\f12}_3}\ls 2\eps_1$ for any $t\in[0,T_2]$.
     
     By the same energy method as the Step 1 of the proof for $\sss>0$ , it holds that
    \[\ba
    \f12\f{d}{dt}\LLT{\F_j\cP_kf_2}^2+\int_{\R^3}(a*f):\na\F_j\cP_kf_2\mult\na\F_j\cP_kf_2dv=\mathfrak{G}_1+\mathfrak{G}_2,
    \ea\]
    where 
  \[\ba
    \mathfrak{G}_1&:=\int_{\R^3} f_1|\F_j\cP_kf_2|^2dv+\int_{\R^3} f_2|\F_j\cP_kf_2|^2dv;\\
    \mathfrak{G}_2&:=\br{\F_j\cP_kQ(f_1,f_2)-Q(f_1,\F_j\cP_kf_2),\F_j\cP_kf_2}
    +\br{\F_j\cP_kQ(f_2,f_2)\\
    	&-Q(f_2,\F_j\cP_kf_2),\F_j\cP_kf_2}
    +\br{\F_j\cP_kQ(f_2,f_1),\F_j\cP_kf_2}:=\mathfrak{G}^1_2+\mathfrak{G}^2_2+\mathfrak{G}^3_2.
    \ea\]
    On one hand, due to \eqref{gFjkhf}, we have that
    \ben\label{RR122}
    |\mathfrak{G}_1|\notag&\ls_N& \sum_{\p\leq j}\LLT{\mF_\p\mP_kf_1}2^{\f32\p}\LLT{\mF_j\mP_kf_2}^2+\sum_{\p\leq j}\LLT{\mF_\p\mP_kf_2}2^{\f32\p}\LLT{\mF_j\mP_kf_2}^2\\
    &&+(\|f_1\|_{H^{-N}_{-N}}+\|f_2\|_{H^{-N}_{-N}})\|\mF_j\mP_k f_2\|^2_{L^2}+2^{-jN}2^{-kN}(\|f_1\|_{H^{-N}_{-N}}+\|f_2\|_{H^{-N}_{-N}})\|f_2\|^2_{H^{-N}_{-N}}.
    \een
    On the other hand, thanks to the decomposition \eqref{decomofcom} together with the estimates \eqref{FjQ}\eqref{FjQ-1}\eqref{PkQ} and \eqref{PkQ-1}, we have
    \ben\label{Com122}
    &&\notag|\mathfrak{G}^1_2|\ls_N\sum_{\p\geq j+N_0}\LLT{\mF_\p\mP_kf_1}\LLT{\mF_\p\mP_kf_2}\LLT{\mF_j\mP_kf_2}2^{\f32j}+\sum_{\p\leq j+N_0}\LLT{\mF_\p\mP_kf_1}2^\f\p2\LLT{\mF_j\mP_kf_2}^22^j\\
    &+&\notag\sum_{\p\leq j+N_0}\LLT{\mF_\p\mP_kf_2}\LLT{\mF_j\mP_kf_1}\LLT{\mF_j\mP_kf_2}2^{\f32\p}
   +2^{-jN}2^{-kN}\|f_1\|_{L^2_{-N}}\|f_2\|_{L^2_{-N}}\HHLL{f_2}
    +2^{-2k}\\
    &&\times\|f_1\|_{L^1_2}\LLT{\mF_j\mP_kf_2}^22^j+2^{-2k}2^{-jN}\|f_1\|_{L^1_2}\HHL{\mP_kf_2}^2+2^{-jN}2^{-kN}\|f_1\|_{L^1_2}\HHLL{f_2}^2.
    \een
    and
    \ben\label{Com222}
   &&\notag|\mathfrak{G}^2_2|\ls_N\sum_{\p\geq j+N_0}\LLT{\mF_\p\mP_kf_2}^2\LLT{\mF_j\mP_kf_2}2^{\f32j}
    +\sum_{\p\leq j+N_0}\LLT{\mF_\p\mP_kf_2}2^{\f{\p}{2}}\LLT{\mF_j\mP_kf_2}^22^j\\	&+&\notag2^{-2k}\|f_2\|_{L^1_2}\LLT{\mF_j\mP_kf_2}^22^j
    +2^{-2k}2^{-jN}\|f_2\|_{L^1_2}\HHL{\mP_kf_2}^2+2^{-kN}\LLT{\mF_j\mP_kf_2}^2\HHLL{f_2}\\
    &+&2^{-jN}2^{-kN}\|f_2\|_{L^2_{-N}}^2\HHLL{f_2}.\een
   
    Now we focus on the estimate of $\mathfrak{G}^3_2$. We split it into two parts: 
    \[\mathfrak{G}^3_2=\sum_{\l\geq0}\br{Q_\l(f_2,f_1),\cP_k\F_j^2\cP_kf_2}+\br{Q_{-1}(f_2,f_1),\cP_k\F_j^2\cP_kf_2}:=\mathfrak{H}_1+\mathfrak{H}_2.\]
    
    \underline{Estimate of $\mathfrak{H}_1$.} First we perform dyadic decomposition in frequency space. Thanks to decomposition \eqref{Qkdecom} together with the estimates \eqref{M1}\eqref{M4}\eqref{M2} and \eqref{M3}, we can deduce that
    \ben\label{asin}
    &&\br{Q_\l(f_2,f_1),\cP_k\F_j^2\cP_kf_2}=\sum_{\a\leq\p-N_0}\br{Q_\l(\F_\p f_2,\F_\a f_1),\tF_\p\cP_k\F_j^2\cP_k f_2}+\sum_{\a\geq-1}\br{Q_\l(\tS_{\a-N_0} f_2,\F_\a f_1),\\
    	&&\notag\tF_\a\cP_k\F_j^2\cP_k f_2}
    +\sum_{\m\leq\p+N_0}\br{Q_\l(\F_\p f_2,\tF_\p f_1),\F_\m\cP_k\F_j^2\cP_kf_2}
    \ls_N\sum_{\p\geq-1}2^{-\l}\LLO{f_2}\LLT{\tF_\p f_1}\LLT{\tF_\p\cP_k\F_j^2\cP_kf_2}\\
    &&\times2^{2\p}\notag
    +\sum_{\m\leq\p+N_0}2^{-\l N}2^{-2\p N}2^{-2\m N}\LLO{f_2}
    \rr{\LLT{\F_\m f_1}\LLT{\tF_\p\cP_k\F_j^2\cP_k f_2}+\LLT{\tF_\p f_1}\LLT{\F_\m\cP_k\F_j^2\cP_kf_2}}.    
    \een
    Then we perform dyadic decomposition in phase space(see \eqref{phasedecom}) to get that
    \ben\label{asin2}
    \mathfrak{H}_1&=&\notag\sum_{0\leq\l\leq k-N_0}\br{Q_\l(\tP_kf_2,\tP_kf_1),\cP_k\F_j^2\cP_kf_2}+\sum_{\l\geq k+N_0}\br{Q_\l(\tP_\l f_2,\tP_kf_1),\cP_k\F_j^2\cP_kf_2}\notag\\
    &&+\sum_{\l\geq0,|k-\l|< N_0}\br{Q_\l(\U_{k+2N_0}f_2,\tP_kf_1),\cP_k\F_j^2\cP_kf_2}.
    \een
    Thus by \eqref{asin}, we have that
    \beno
      &&|\mathfrak{H}_1|
    \ls_N\sum_{\p\geq-1}2^{-k}\|f_2\|_{L^1_1}\LLT{\tF_\p\tP_kf_1}\LLT{\tF_\p\cP_k\F_j^2\cP_kf_2}2^{2\p}+\sum_{\m\leq\p+N_0}2^{-2\p N}2^{-2\m N}2^{-k}\\
    &&\times\|f_2\|_{L^1_1}\rr{\LLT{\F_\m\tP_kf_1}\LLT{\tF_\p\cP_k\F_j^2\cP_kf_2}+\LLT{\tF_\p\tP_kf_1}\LLT{\F_\m\cP_k\F_j^2\cP_kf_2}}.
    \eeno
   Futhermore, due to Proposition \ref{PFCom}, we can derive that
    \ben\label{lge0212}
    &&|\mathfrak{H}_1|
    \ls\sum_{|\p-j|\leq N_0}2^{-k}\|f_2\|_{L^1_1}\LLT{\tF_\p\tP_kf_1}\LLT{\F_j^2\cP_kf_2}2^{2\p}+\sum_{|\p-j|> N_0}2^{-k}\|f_2\|_{L^1_1}\LLT{\tF_\p\tP_kf_1}\LLT{\tF_\p\cP_k\F_j^2\cP_kf_2}\notag\\
    &&\times2^{2\p}+\notag2^{-k}\Big(\sum_{|\p-j|\leq N_0}\sum_{\m\leq\p+N_0}2^{-2\p N}2^{-2\m N}\|f_2\|_{L^1_1}\LLT{\F_\m\tP_kf_1}\LLT{\F_j^2\cP_kf_2}+\sum_{|\p-j|> N_0}\sum_{\m\leq\p+N_0}2^{-2\p N}2^{-2\m N}
   \\
    &&\notag\times \|f_2\|_{L^1_1}\LLT{\F_\m\tP_kf_1}\LLT{\tF_\p\cP_k\F_j^2\cP_kf_2}+\sum_{|\m-j|\leq N_0}\sum_{\p\geq\m-N_0}2^{-2\p N}2^{-2\m N}\|f_2\|_{L^1_1}\LLT{\tF_\p\tP_kf_1}\LLT{\F_j^2\cP_kf_2}\\
    &+&\notag\sum_{|\m-j|> N_0}\sum_{\p\geq\m-N_0}2^{-2\p N}2^{-2\m N}\|f_2\|_{L^1_1}\LLT{\tF_\p\tP_kf_1}\LLT{\F_\m\cP_k\F_j^2\cP_kf_2}\Big)\ls_N2^{-k}\|f_2\|_{L^1_1}\LLT{\mF_j\mP_kf_1}\LLT{\mF_j\mP_kf_2}\\
    &&\times2^{2j}+2^{-k}2^{-jN}\|f_2\|_{L^1_1}\HHL{\mP_kf_1}\HHL{\mP_kf_2}+2^{-jN}2^{-kN}\|f_2\|_{L^1_1}\HHLL{f_1}\HHLL{f_2}.
    \een
    \underline{Estimate of $\mathfrak{H}_2$.} First we perform dyadic decomposition in frequency space. Thanks to decomposition \eqref{Qkdecom} together with the estimates \eqref{M1s}\eqref{M4s}\eqref{M2s} and \eqref{M3s}, we have
    \ben\label{f12pk}
    &&|\mathfrak{H}_2|\leq\sum_{\a\leq\p-N_0}|\br{Q_{-1}(\F_\p f_2,\F_\a f_1),\tF_\p\cP_k\F_j^2\cP_k f_2}|+\sum_{\a\geq-1}|\br{Q_{-1}(\tS_{\a-N_0} f_2,\F_\a f_1),\tF_\a\cP_k\F_j^2\cP_k f_2}|\\
    &&\notag+\sum_{\m\leq\p+N_0}|\br{Q_{-1}(\F_\p f_2,\tF_\p f_1),\F_\m\cP_k\F_j^2\cP_kf_2}|\ls\sum_{\a\leq\p-N_0}\LLT{\F_\p f_2}\LLT{\tF_\p\cP_k\F_j^2\cP_k f_2}\LLT{\F_\a f_1}2^{\f32\a}\\
    	&+&\notag\sum_{\p\leq\a-N_0}\LLT{\F_\p f_2}2^{-\f\p2}\LLT{\F_\a f_1}\LLT{\tF_\a\cP_k\F_j^2\cP_k f_2}2^{2\a}+\sum_{\m\leq\p+N_0}2^{\f32\m}\LLT{\F_\p f_2}\LLT{\tF_\p f_1}\LLT{\F_\m\cP_k\F_j^2\cP_kf_2}.
    \een
    Then we perform the decomposition in phase space. Note that $\<v\>\sim\<v_*\>$ for $Q_{-1}$,  thus by \eqref{f12pk}, we obtain that
    \beno
    &&|\mathfrak{H}_2|\ls \sum_{\a\leq\p-N_0}\LLT{\F_\p\tP_k f_2}\LLT{\tF_\p\cP_k\F_j^2\cP_k f_2}\LLT{\F_\a\tP_kf_1}2^{\f32\a}
    +\sum_{\p\leq\a-N_0}\LLT{\F_\p\tP_kf_2}2^{-\f\p2}\LLT{\F_\a\tP_k f_1}\\
    &&\times\LLT{\tF_\a\cP_k\F_j^2\cP_k f_2}2^{2\a}
    +\sum_{\m\leq\p+N_0}\LLT{\F_\p\tP_k f_2}\LLT{\tF_\p\tP_kf_1}\LLT{\F_\m\cP_k\F_j^2\cP_kf_2}2^{\f32\m}.
    \eeno
   Futhermore, due to Proposition \ref{PFCom} and similar argument as \eqref{lge0212}, we can derive that
   \ben\label{l0212}
    &&|\mathfrak{H}_2|\ls_N \sum_{\a\leq j}\LLT{\mF_\a\mP_kf_1}2^{\f32\a}\LLT{\mF_j\mP_kf_2}^2+\sum_{\p\leq j}\LLT{\mF_\p\mP_kf_2}2^{-\f\p2}
    \LLT{\mF_j\mP_kf_1}\\
    &&\times\notag\LLT{\mF_j\mP_kf_2}2^{2j}+\sum_{\p\geq j}\LLT{\mF_\p\mP_kf_2}\LLT{\mF_\p\mP_kf_1}\LLT{\mF_j\mP_kf_2}2^{\f32j}
    +2^{-jN}2^{-kN}\\
    &&\times\notag\HHLL{f_1}\HHLL{f_2}^2+2^{-jN}2^{-kN}\|\mP_kf_2\|_{L^2_{-N}}\|\mP_kf_1\|_{L^2_{-N}}\HHLL{f_2}.
   \een
    Combining the previous estimate \eqref{RR122}\eqref{Com122}\eqref{Com222}\eqref{lge0212} and \eqref{l0212} together with Proposition \ref{coer}, the energy equation comes to 
    \ben\label{f2evo}
    \f12\f{d}{dt}\LLT{\F_j\cP_kf_2}^2+\kappa\LLT{\F_j\cP_kf_2}^22^{2j}2^{-3k}+\kappa\LLT{(-\Delta_{\SS^2})^\f12\F_j\cP_kf_2}^22^{-3k}\ls_N \mathfrak{I}_1+\mathfrak{I}_2,
    \een
    where
    \ben\label{i1}
    &&\notag\mathfrak{I}_1=\sum_{\p\geq j}\LLT{\mF_\p\mP_kf_2}^2\LLT{\mF_j\mP_kf_2}2^{\f32j}\notag
    +\sum_{\p\leq j+N_0}\LLT{\mF_\p\mP_kf_2}2^{\f{\p}{2}}\LLT{\mF_j\mP_kf_2}^22^j\\
    &&\notag+\sum_{\p\geq j}\LLT{\mF_\p\mP_kf_1}\LLT{\mF_\p\mP_kf_2}\LLT{\mF_j\mP_kf_2}2^{\f32j}\notag
    +\sum_{\p\leq j+N_0}\LLT{\mF_\p\mP_kf_1}2^\f\p2\LLT{\mF_j\mP_kf_2}^22^j\\
    &&+\sum_{\p\leq j+N_0}\LLT{\mF_\p\mP_kf_2}2^{-\f\p2}\LLT{\mF_j\mP_kf_1}\LLT{\mF_j\mP_kf_2}2^{2j}
    \een
    and
    \ben\label{i2}
    \mathfrak{I}_2&=&(\|f_1\|_{H^{-N}_{-N}}+\|f_2\|_{H^{-N}_{-N}})\|\mF_j\mP_k f_2\|^2_{L^2}+2^{-k}\|f_2\|_{L^1_1}\LLT{\mF_j\mP_kf_1}\LLT{\mF_j\mP_kf_2}2^{2j}\\
    &+&\notag2^{-2k}\|f_1\|_{L^1_2}\LLT{\mF_j\mP_kf_2}^22^j+2^{-2k}\|f_2\|_{L^1_2}\LLT{\mF_j\mP_kf_2}^22^j+2^{-kN}\LLT{\mF_j\mP_kf_2}^2\HHLL{f_2}\\
    &+&\notag2^{-k}2^{-jN}\|f_2\|_{L^1_1}\HHL{\mP_kf_1}\HHL{\mP_kf_2}+2^{-2k}2^{-jN}\big(\|f_1\|_{L^1_2}\HHL{\mP_kf_2}^2+\|f_2\|_{L^1_2}\HHL{\mP_kf_2}^2\big)\\
    &+&\notag2^{-jN}2^{-kN}\big(\|f_1\|_{L^2_{-N}}\|f_2\|_{L^2_{-N}}+\|f_1\|_{L^1_2}\HHLL{f_2}+\|f_2\|_{L^2_{-N}}^2+\|f_2\|_{L^1_1}\HHLL{f_1}\big)\HHLL{f_2}.
   \een
   
    Now we choose $N=10$ and multiply $2^{-j}2^{6k}$  on both sides of \eqref{f2evo} and sum up with respect to $j$ and $k$. For the first term and the second term of $\mathfrak{I}_1$, we have
    \[\ba
    &\sum_{j,k}\sum_{\p\geq j}\LLT{\mF_\p\mP_kf_2}^2\LLT{\mF_j\mP_kf_2}2^{\f j2}2^{6k}+\sum_{j,k}\sum_{\p\leq j+N_0}\LLT{\mF_\p\mP_kf_2}2^{\f{\p}{2}}\LLT{\mF_j\mP_kf_2}^22^{6k}\\
    \ls&\sum_{j,k}\sum_{\p\geq j}\LLT{\mF_\p\mP_kf_2}^22^\p\LLT{\mF_j\mP_kf_2}2^{-\f12j}2^{j-\p}2^{6k}\ls\|f_2\|_{H^{-\f12}_3}\|f_2\|_{H^{\f12}_{\f32}}^2.
    \ea\]
    For the third and fifth terms, we have
    \[\ba
    &\sum_{j,k}\sum_{\p\geq j}\LLT{\mF_\p\mP_kf_1}\LLT{\mF_\p\mP_kf_2}\LLT{\mF_j\mP_kf_2}2^{\f12j}2^{6k}+\sum_{j,k}\sum_{\p\leq j+N_0}\LLT{\mF_\p\mP_kf_2}2^{-\f\p2}\LLT{\mF_j\mP_kf_1}\\
    &\times\LLT{\mF_j\mP_kf_2}2^{j}2^{6k}
    \ls\sum_{j,k}\sum_{\p\geq j}\LLT{\mF_\p\mP_kf_1}2^{\f32\p}\LLT{\mF_\p\mP_kf_2}2^{-\f12\p}\LLT{\mF_j\mP_kf_2}2^{-\f12j}2^{j-\p}2^{6k}\\
    &+\sum_{j,k}\sum_{\p\leq j+N_0}\LLT{\mF_\p\mP_kf_2}2^{-\p}\LLT{\mF_j\mP_kf_1}2^{2j}\LLT{\mF_j\mP_kf_2}2^{-\f j2}2^{6k}    \ls\|f_1\|_{H^3_3}\|f_2\|_{H^{-\f12}_3}^2.
    \ea\]
    For the fourth term, use Cauchy inequality, we have
    \[\ba
    &\sum_{j,k}\sum_{\p\leq j+N_0}\LLT{\mF_\p\mP_kf_1}2^\f\p2\LLT{\mF_j\mP_kf_2}^22^{6k}=\sum_{j,k}\sum_{\p\leq j+N_0}\LLT{\mF_\p\mP_kf_1}2^\f\p22^{-\f j2}\LLT{\mF_j\mP_kf_2}^22^{\f j2}2^{6k}\\
    \ls&\|f_1\|_{H^3_3}\|f_2\|_{H^{\f12}_{\f32}}\|f_2\|_{H^{-\f12}_3}\ls\eps\|f_2\|_{H^{\f12}_{\f32}}^2+C_\eps\|f_1\|_{H^{3}_{3}}^2\|f_2\|_{H^{-\f12}_3}^2.
    \ea\]
    Thus we obtain that
    \beno
    \sum_{i,k}(2^{-j}2^{6k}\mathfrak{I}_1)\ls \eps \|f_2\|^2_{H^{\f12}_{\f32}}+\|f_2\|_{H^{-\f12}_3}\|f_2\|_{H^{\f12}_{\f32}}^2+\|f_1\|_{H^3_3}\|f_2\|_{H^{-\f12}_3}^2+C_\eps\|f_1\|_{H^{3}_{3}}^2\|f_2\|_{H^{-\f12}_3}^2.
    \eeno
    
   We remark that $\mathfrak{I}_2$ can be handled by the similar manner. Noticing that $\|f_1\|_{H^{-N}_{-N}}+\|f_2\|_{H^{-N}_{-N}}+\|f_2\|_{L^1_1}+\|f_1\|_{L^1_2}\leq C$, we omit the details to get that
   \beno
   \sum_{i,k}(2^{-j}2^{6k}\mathfrak{I}_2)\ls \|f_2\|^2_{H^{-\f12}_3}+\|f_1\|_{H^3_3}\|f_2\|_{H^{-\f12}_3}+\|f_2\|^2_{L^2_2}\ls\eps\|f\|^2_{H^{\f12}_{\f32}}+C_\eps\|f_2\|^2_{H^{-\f12}_3}+\|f_1\|_{H^3_3}\|f_2\|_{H^{-\f12}_3}.
   \eeno
   
    Let $X(t):=\sum_{j,k}\LLT{\F_j\cP_kf_2}^22^{-j}2^{6k}, Y(t):=\sum_{j,k}\LLT{\F_j\cP_kf_2}^22^{j}2^{3k}$. Combining  the estimates of $\mathfrak{I}_1$ and $\mathfrak{I}_2$, we can get that
    \ben\label{019}
    X'(t)+\rr{\kappa-\eps -C_\eps X^\f12}Y(t)\leq C\big(\sup_{t\in[0,T_1]}\|f_1(t)\|_{H^3_3}\big)\rr{X+1}.
    \een
 Note that the constants on the left hand side of \eqref{019} does not depend on $\sup_{t\in[0,T_1]}\|f_1(t)\|_{H^3_3}$. Then we choose $\eps_1$ such that $X(0)\ls \eps^2_1$ is suitably small, by the continuity argument,  there exists $0<T_2<T_1$ such that 
    \beno
    \sup_{t\in[0,T_2]}X(t)+\int_0^{T_2}Y(\tau)d\tau \ls 2\eps_1^2. 
    \eeno
Moreover,  $T_2$ only depends on $\kappa, T_1,\eps_1$ and $\sup_{t\in[0,T_1]}\|f_1(t)\|_{H^3_3}$.

Therefore, we have 
 $f_2\in L^\infty ([0,T_2],H^{-\f12}_3)\cap L^2([0,T_2],H^{\f12}_{\f32}).$
     Following the same approach as outlined in Step 3 of the proof for
Theorem \ref{uniquethm}(1) for $\sss>0$, we can obtain that $f_2\in C([0,T_2],H^{-\f12}_3)$, 
together with \eqref{0002}, it holds that $f=f_1+f_2\in C([0,T_2],H^{-\f12}_3)\cap L^2([0,T_2],H^{\f12}_{\f32})$ and we end the proof.
\end{proof}

Next, we will establish the regularity estimates in the critical space $H^{-\f12}_l,l\geq3$. 
\begin{proof}[Proof of Theorem \ref{T1}(1.1) for $\mathsf{r}=-\f12$] To prove the desired result, we shall prove that for any fixed $\nn\geq-\f12$ and $T>0$, it holds that
\ben\label{HnlE}\|f(t)\|_{H^\nn_{l-\f{3}{2}\nn-\f{3}{4}}(\R^3)}\ls t^{-\f{\nn}{2}+\f{\mathsf{r}}{2}}\mathbf{1}_{t\leq1}+C(t)\mathbf{1}_{t>1},\quad \forall t\in(0,T],\een 
where  $C(t)$ is a bounded function. The proof is split into two parts.

\underline{Step 1: Validity of \eqref{HnlE} over short time inverval.}
	Recall that $f\in C([0,T],H^{-\f12}_l)\cap L^2([0,T],H^{\f12}_{l-\f32}),l\geq3$, in particular, $f(0)\in H^{-\f12}_l$. Let $f_1$ be the smooth  solution of \eqref{1} with initial data $J_\delta*f(0)$ such that $\|f(0)-J_\delta*f(0)\|_{H^{-\f12}_3}<\eps_1$, in which \( \eps_1 \) is a certain small constant depending on $\nn,\ell$ and $\kappa$(see\eqref{019}  and \eqref{015}).  The parameter $\delta$, apparently, depends on $\eps_1$ and function $f(0)$.

  Since it holds that $\|f_1(0)\|_{H^m_{l}}\ls C_{m,l,\delta} \|f(0)\|_{H^{-\f12}_l}$ for any $m\geq2$, due to Proposition \ref{018}, we can obtain that there exists  $T_1>0$  that only depends on $\nn,l,\delta$ and $\|f(0)\|_{H^{-\f12}_l}$ such that
\ben\label{020}
\sup_{t\in[0,T_1]}\|f_1(t)\|_{H^{3}_l}\leq C_{l,\delta} \|f(0)\|_{H^{-\f12}_l},\\
\label{f1nl}
	\sup_{t\in[0,T_1]}\|f_1(t)\|_{H^{\nn+2}_l}\leq C_{\nn,l,\delta} \|f(0)\|_{H^{-\f12}_l}.
	\een
We address that \(\nn \) is fixed at the beginning of the proof. 

By \eqref{019} and \eqref{020}, there exists $T_2>0$ that only depending on $\nn,\kappa, l,\delta, \eps_1$ and $\|f(0)\|_{H^{-\f12}_l}$ such that $f_2:=f-f_1\in C([0,T_2],H^{-\f12}_3)$ and
\ben\label{014}
\sup_{t\in[0,T_2]}\|f_2(t)\|_{H^{-\f12}_l}\ls 2\eps_1.
\een
 
Now define $T_0:=\min\{T_1,T_2\}$. Because of \eqref{f1nl},  \eqref{HnlE} is reduced to prove that
\ben\label{004}
\|f_2(t)\|_{H^\nn_{l-\f32\nn-\f34}}\ls t^{-\f{\nn}{2}-\f14}+1,\quad \forall t\in(0,T_0].
\een
We emphasize again that   $T_0$ depends on $\nn,\kappa,l,\delta,\eps_1$ and $\|f(0)\|_{H^{-\f12}_l}$.

For $\nn\geq-\f12$ and $ \ell\leq l-\f{3}{2}\nn-\f34$, we choose $N=2\nn+2|\ell|+3$ and multiply  $2^{2j\mathrm{n}}2^{2k\ell}$ on both sides of \eqref{f2evo}  and sum up with respect to $j$ and $k$. For the first term and the second term of $\mathfrak{I}_1$, we have
    \[\ba
    &\sum_{j,k}\sum_{\p\geq j}\LLT{\mF_\p\mP_kf_2}^2\LLT{\mF_j\mP_kf_2}2^{(2\nn+\f32)j}2^{2k\ell}+\sum_{j,k}\sum_{\p\leq j+N_0}\LLT{\mF_\p\mP_kf_2}2^{\f{\p}{2}}\LLT{\mF_j\mP_kf_2}^2\\
    &\times2^{(2\nn+1)j}2^{2k\ell}
    =\sum_{j,k}\sum_{\p\geq j}\LLT{\mF_\p\mP_kf_2}^22^{(2\nn+2)\p}\LLT{\mF_j\mP_kf_2}2^{-\f12j}2^{2k\ell}2^{(2\nn+2)(j-\p)}\\&+\sum_{j,k}\sum_{\p\leq j+N_0}\LLT{\mF_\p\mP_kf_2}2^{-\f{\p}{2}}\LLT{\mF_j\mP_kf_2}^22^{(2\nn+2)j}2^{\p-j}2^{2k\ell}\ls\|f_2\|_{H^{-\f12}_3}\|f_2\|_{H^{\nn+1}_{\ell-\f32}}^2.
    \ea\]
    For the third term and the fifth term, we have
    \[\ba
    &\sum_{j,k}\sum_{\p\geq j}\LLT{\mF_\p\mP_kf_1}\LLT{\mF_\p\mP_kf_2}\LLT{\mF_j\mP_kf_2}2^{(2\nn+\f32)j}2^{2k\ell}+\sum_{j,k}\sum_{\p\leq j+N_0}\LLT{\mF_\p\mP_kf_2}2^{-\f\p2}\LLT{\mF_j\mP_kf_1}\\
    &\times\LLT{\mF_j\mP_kf_2}2^{(2\nn+2)j}2^{2k\ell}
    =\sum_{j,k}\sum_{\p\geq j}\LLT{\mF_\p\mP_kf_1}2^{(\nn+1)\p}\LLT{\mF_\p\mP_kf_2}2^{(\nn+1)\p}\LLT{\mF_j\mP_kf_2}2^{-\f12j}2^{(2\nn+2)(j-\p)}\\
    &\times2^{2k\ell}
    +\sum_{j,k}\sum_{\p\leq j+N_0}\LLT{\mF_\p\mP_kf_2}2^{-\f\p2}\LLT{\mF_j\mP_kf_1}\LLT{\mF_j\mP_kf_2}2^{(2\nn+2)j}2^{2k\ell}\ls\|f_2\|_{H^{-\f12}_3}\|f_1\|_{H^{\nn+2}_\ell}\|f_2\|_{H^{\nn+1}_{\ell-\f32}}\\
    &\ls\eps\|f_2\|_{H^{\nn+1}_{\ell-\f32}}^2+C_\eps\|f_2\|_{H^{-\f12}_3}^2\|f_1\|_{H^{\nn+2}_\ell}^2.
    \ea\]
    For the fourth term, we have
    \[\sum_{j,k}\sum_{\p\leq j+N_0}\LLT{\mF_\p\mP_kf_1}2^\f\p2\LLT{\mF_j\mP_kf_2}^22^{(2\nn+1)j}2^{2k\ell}\ls\|f_1\|_{H^1_l}\|f_2\|_{H^{\nn+\f12}_{\ell-\f l2}}^2\ls\eps\|f_2\|_{H^{\nn+1}_{\ell-\f32}}^2+C_\eps\|f_1\|_{H^1_l}^{2\nn+3}\|f_2\|_{H^{-\f12}_3}^2,\]
    where we use the interpolation inequality and Young inequality.
    Thus we conclude that
    \beno
    \sum_{j,k}(2^{2j\mathrm{n}}2^{2k\ell}\mathfrak{I}_1)\ls (\|f_2\|_{H^{-\f12}_3}+\eps)\|f_2\|_{H^{\nn+1}_{\ell-\f32}}^2+C_\eps\|f_2\|_{H^{-\f12}_3}^2(\|f_1\|_{H^{\nn+2}_\ell}^2+\|f_1\|_{H^1_l}^{2\nn+3}).
    \eeno
    The term $\mathfrak{I}_2$ can be estimated in a similar way and we can derive that
    \beno
    \sum_{j,k}(2^{2j\mathrm{n}}2^{2k\ell}\mathfrak{I}_2)\ls \|f_2\|^2_{H^\nn_\ell}+\|f_1\|^2_{H^{\nn+2}_l}+\|f_2\|^2_{H^{\nn+\f12}_{\ell-1}}+\|f_2\|^2_{L^2_{-N}}.
    \eeno
    
    Now let 
     \beno
     X_{\nn,\ell}(t):=\sum_{j,k}\LLT{\F_j\cP_kf_2}^22^{2j\mathrm{n}}2^{2k\ell},~~ Y_{\nn,\ell}(t):=\sum_{j,k}\big(\LLT{\F_j\cP_kf_2}^22^{(2\mathrm{n}+2)j}2^{(2\ell-3)k}+\LLT{(-\Delta_{\SS^2})^\f12\F_j\cP_kf_2}^22^{(2\ell-3)k}\big).
     \eeno Then by \eqref{interpo} and interpolation inequality $\|f_2\|_{L^2_{-N}}\leq \eps \|f_2\|_{H^{n+1}_{\ell-\f32}}+C_\eps\|f_2\|_{H^{-\f12}_3}$, 
      we can deduce that
     \ben\label{015}
     X_{\nn,\ell}'(t)+(\kappa-C_{\nn,\ell}\|f_2\|_{H^{-\f12}_3}-\eps)Y_{\nn,\ell}(t)\ls C(\sup_{t\in[0,T_0]}\|f_1\|_{H^{\nn+2}_\ell})(\|f_2\|^2_{H^{-\f12}_3}+1).
     \een
  From \eqref{014}, we have that $\sup_{t\in[0,T_0]}\|f_2\|_{H^{-\f12}_3}\ls 2\eps_1$   with $\eps_1$ sufficiently small, and \eqref{f1nl} holds, thus we can choose small $\eps$ to get that 
\beno
{X'_{\nn,\ell}}(t)+\f\kappa 4X_{\nn,\ell}(t)^{1+\f{1}{\nn+\f12}}\ls C,\quad t\in(0,T_0],
\eeno
Therefore, \eqref{004} holds ture if we let $\ell=l-\f32\nn-\f34$, together with \eqref{f1nl}, we conclude that
\ben\label{013}
\|f(t)\|_{H^\nn_{l-\f32\nn-\f34}}\leq \|f_1(t)\|_{H^\nn_{l-\f32\nn-\f34}}+\|f_2(t)\|_{H^\nn_{l-\f32\nn-\f34}}\ls t^{-\f{\nn}{2}-\f14}+1,\quad t\in(0,T_0].
\een
In particular, it holds that  $\sup\limits_{t\in[T_0/2,T_0]}\|f(t)\|_{H^\nn_{l-\f32\nn-\f34}}\ls 1$.

\underline{Step 2: Validity of \eqref{HnlE} over $[0,T]$.} Note that $T>0$ is fixed.  Since $f\in C([0,T],H^{-\f12}_l)$, we can take any moment as the initial time and repeat the aforementioned process, the issue lies in demonstrating that the time for each extension is a common constant, namely,  $T_0$  only depends on parameters $\nn,l$ and \(\sup_{t \in [0, T]}\|f(t)\|_{H^{-\frac{1}{2}}_l}\). Following the notations used in the previous step and their interdependencies,   we only need to demonstrate that for \(t \in [0, T]\), \(\delta\) can be independent of the function \(f(t)\).

Indeed, by the fact that $f\in C([0,T],H^{-\f12}_l)$, we can easily see that $f-J_\delta*f\in C([0,T],H^{-\f12}_l)$. Moreover, it is also continuous with respect to the parameter \(\delta\).
 For any $t_0\in[0,T]$ and $\eps_1$, there exists $\delta(t_0)$ such that $\|f(t_0)-J_{\delta}*f(t_0)\|_{H^{-\f12}_l}<\eps_1/2$ for any $\delta\leq\delta(t_0)$, by continuity, there exists $\tau_{t_0}>0$ such that
\beno
\|f(t)-J_{\delta}*f(t)\|_{H^{-\f12}_l}<\eps_1,\quad t\in(t_0-\tau_{t_0},t_0+\tau_{t_0}),\delta\leq\delta(t_0).
\eeno
Thus $\cup_{t_0\in[0,T]}(t_0-\tau_{t_0},t_0+\tau_{t_0})$ is an open cover of set $[0,T]$, by Heine-Borel Theorem, there exists a finite subcover such that $[0,T]\subset\cup_{i=1}^N(t_i-\tau_{t_i},t_i+\tau_{t_i})$. Then we choose $\delta_0=\min\{\delta(t_i),\cdots,\delta(t_N)\}$ and it holds that
\beno
\|f(t)-J_{\delta_0}*f(t)\|_{H^{-\f12}_l}<\eps_1,\quad \forall t\in[0,T].
\eeno
Therefore, we can conclude that $T_0$ is a common lifespan which only depends on $\nn,l$ and $\sup_{t\in[0,T]}\|f(t)\|_{H^{-\f12}_l}$. 

Now we take $T_0/2$ as the initial time to get that $\sup_{t\in[T_0,3T_0/2]}\|f(t)\|_{H^\nn_{l-\f32\nn-\f34}}\ls 1$. Repeat this process, we can obtain that $\sup_{t\in[T_0/2,T]}\|f(t)\|_{H^\nn_{l-\f32\nn-\f34}}\ls 1$, together with \eqref{013}, we conclude that
\[\|f(t)\|_{H^\nn_{l-\f32\nn-\f34}}\ls t^{-\f{\nn}{2}-\f14}\mathbf{1}_{t\leq 1}+C(t)\mathbf{1}_{t>1},\quad t\in(0,T].\]
Due to the arbitrariness of $\nn$ and \( T \), we end the proof of Theorem \ref{T1}(1.1) for $\mathsf{r}=-\f12$.


\end{proof}

\section{Proof of Theorem \ref{uniquethm}(2) and (3): Global existence and uniqueness}\label{section5}
In the preceding sections, we have derived local results for the spaces \( H^{-\frac{1}{2}, \sss} \) with \( \sss \in [0,1] \). This section aims to show that these local solutions can, in fact, be extended to global solutions.
 This relies on the emergence of Fisher information beyond the initial time which is a direct consequence of the regularity estimates and its monotonic property(see \cite{GS}). Subsequently, we will use direct energy estimates to prove the uniqueness of solutions in the space \( H^{-\frac{1}{2}, \sss}_l \) with \( \sss > \frac{1}{2} \) and $l\geq5$.

 \begin{proof}[Proof of Theorem \ref{uniquethm}(2)]
From previous sections, we already have the local existence results, i.e.
\beno
f\in C([0,T];H^{-\f12,\sss}_l)\cap L^2([0,T];H^{\f12,\sss}_{l-\f32}),\quad l\geq3
\eeno
for some $T>0$. Thus within $t\in[0,T]$, by the argument in the proof of Theorem \ref{T1}(1.1), we can obtain the smoothing estimates
\ben\label{003}
\|f(t)\|_{H^\nn_{l-\f32\nn-\f34}}\ls_\nn t^{-\f\nn 2-\f14}+1,\quad t\in(0,T].
\een
Noticing that Fisher information 
\beno
I(f):=\int _{\R^3}{|\na \sqrt{f}|^2}dv\ls_\eps \|f\|_{H^2_{\f32+\eps}},\quad \mbox{for~any}\quad\eps>0.
\eeno
By interpoaltion, it holds that
 \beno
 I(f)\ls_\eps \|f\|^\theta_{H^{-\f32-\eps}_7}\|f\|^{1-\theta}_{H^{\nn}_{3-\f32\nn-\f34}}\ls_\eps \|f\|^{\theta}_{L^1_7}\|f\|^{1-\theta}_{H^{\nn}_{3-\f32\nn-\f34}},\quad \nn=34,~~\theta=\f{\nn-2}{\nn+\f32+\eps},~~\eps<\f1{100}.
 \eeno
Thus due to \eqref{003}, the fact $f\in L^\infty([0,T],L^1_{2l+1}),l\geq3$ and monotonic property of $I(f)$, we know that for any $t>0$, the fisher information $I(f)(t)<+\infty$.
Observing that $\|f\|_{L^3}\leq I(f)$, thus it holds that
\ben\label{0000}
\|f\|_{H^{-\f12,\sss}_l}\ls \|f\|^{\f12}_{L^3}\|f\|^{\f12}_{L^1_{2l+1}}\ls I(f)^{\f12}\|f\|^{\f12}_{L^1_{2l+1}}.
\een
In particular, $\|f(T)\|_{H^{-\f12,\sss}_l}<\infty$, as a consequence, the solution can be extended globally and 
\ben\label{finL2}
f\in C([0,\infty),H^{-\f12,\sss}_l)\cap L^2_{loc}([0,\infty),H^{\f12,\sss}_{l-\f32}). 
\een
Noticing that \eqref{SMesti} also implies that the solution is $C^\infty(\R^3_v)$ for any positive time.

We ends the proof of this theorem.
\end{proof}

Now, we are in the position of proving uniqueness  in space \( H^{-\frac{1}{2}, \sss}_l \) with \( \sss > \frac{1}{2} \) and $l\geq5$.
\begin{proof}[Proof of Theorem \ref{uniquethm}(3)]
It is sufficient to prove the case $l=5$. Let $\mathbf{f}(t), \ggg(t)$ be two solutions obtained as above with the same initial data $\fff(0)=\ggg(0)=f_0\in H^{-\f12,\sss}_5\cap L^1_{11}$. For $\eps_1>0$ fixed suitably small, choose $\delta>0$ sufficiently small such that $\|f_0-J_\delta*f_0\|_{H^{-\f12,\sss}_5}<\eps_1$. Let $\fff_1(t)\in L^\infty([0,T_1],L^1_{11}\cap H^5_5)$ be the unique solution of \eqref{1} with $\fff_1(0)=J_\delta*f_0$ for some $T_1>0$ and set $\fff_2(t)=\fff(t)-\fff_1(t), \ggg_2(t)=\ggg(t)-\fff_1(t)$. Then $\fff_2(t), \ggg_2(t)$ satisfies
	\[\pa_t\fff_2=Q(\fff,\fff_2)+Q(\fff_2,\fff_1),\quad\pa_t\ggg_2=Q(\ggg,\ggg_2)+Q(\ggg_2,\ggg_1),\quad \fff_2(0)=\ggg_2(0)=f_0-J_\delta*f_0.\]
	We remark that $\|\fff\|^2_{H^{-\f12,\sss}_5}\sim\sum_{j,k}2^{-j}j^{2\sss}2^{10k}\LLT{\F_j\cP_k\fff}^2$. Now multiply on both sides of \eqref{f2evo} $2^{-j}j^{2\sss}2^{10k}$ and sum up with respect to $j$ and $k$. By the previous argument we get that there exists $0<T_2<T_1$ such that $\|\fff_2(t)\|_{H^{-\f12,\sss}_5}\ls2\eps_1, \|\ggg_2(t)\|_{H^{-\f12,\sss}_5}\ls 2\eps_1$ for any $t\in[0,T_2]$. Moreover, it also holds that
	\ben\label{inte}
	 \|\fff_2(t)\|_{H^{\f12,\sss}_{\f72}}^2+\|\ggg_2(t)\|_{H^{\f12,\sss}_{\f72}}^2+\|(-\Delta_{\SS^2})^\f12\fff_2(t)\|_{H^{-\f12,\sss}_{\f72}}^2+\|(-\Delta_{\SS^2})^\f12\ggg_2(t)\|_{H^{-\f12,\sss}_\f72}^2\in L^1[0,T_2],
	 \een
	 where the spherical Laplacian terms come from \eqref{f2evo} and \eqref{LaplaceBeltrami}.
	 
		Let $\hhh(t)=\fff(t)-\ggg(t)$, then $\hhh(t)$ satisfies
	\[\pa_t\hhh=Q(\fff,\hhh)+Q(\hhh,\fff_1)+Q(\hhh,\ggg_2),\quad \hhh(0)=0.\]
	In the following, we are dedicated to proving that $h(t)=0,t\in[0,T_2]$.
	
	By the energy method as before, we have
	\ben\label{eneq}
	\f12\f{d}{dt}\LLT{\F_j\cP_k\hhh}^2+\int_{\R^3}(a*\fff):\na\F_j\cP_k\hhh\mult\na\F_j\cP_k\hhh dv=\mathfrak{J}_1+\mathfrak{J}_2.
	\een
	where
	\[\ba
	\mathfrak{J}_1&:=\int_{\R^3} \fff_1|\F_j\cP_k\hhh|^2dv+\int_{\R^3} \fff_2|\F_j\cP_k\hhh|^2dv\\
	+&\br{\F_j\cP_kQ(\fff_1,\hhh)-Q(\fff_1,\F_j\cP_k\hhh),\F_j\cP_k\hhh}+\br{\F_j\cP_kQ(\fff_2,\hhh)-Q(\fff_2,\F_j\cP_k\hhh),\F_j\cP_k\hhh},\\
	\mathfrak{J}_2&:=\br{\F_j\cP_kQ(\hhh,\fff_1),\F_j\cP_k\hhh}
	+\br{\F_j\cP_kQ(\hhh,\ggg_2),\F_j\cP_k\hhh}.
	\ea\]
	For $\mathfrak{J}_1$, we use the estimate \eqref{RR122} and \eqref{Com122} to get that
$|\mathfrak{J}_1|\ls_N \mathfrak{J}^1_{1}+\mathfrak{J}_1^2$ with
	\ben\label{J11}
	\mathfrak{J}^1_{1}&=&\sum_{\p\leq j}(\LLT{\mF_\p\mP_k\fff_1}+\LLT{\mF_\p\mP_k\fff_2})2^{\f32\p}\LLT{\mF_j\mP_k\hhh}^2\\
	&&+\notag(\|\fff_1\|_{H^{-N}_{-N}}+\|\fff_2\|_{H^{-N}_{-N}})\|\mF_j\mP_k \hhh\|^2_{L^2}+2^{-jN}2^{-kN}(\|\fff_1\|_{H^{-N}_{-N}}+\|\fff_2\|_{H^{-N}_{-N}})\|\hhh\|^2_{H^{-N}_{-N}}.
	\een
and
\ben\label{J12}
\mathfrak{J}_1^2&=&\notag\sum_{\p\geq j+N_0}(\LLT{\mF_\p\mP_k\fff_1}+\LLT{\mF_\p\mP_k\fff_2})\LLT{\mF_\p\mP_k\hhh}\LLT{\mF_j\mP_k\hhh}2^{\f32j}+\sum_{\p\leq j+N_0}(\LLT{\mF_\p\mP_k\fff_1}+\LLT{\mF_\p\mP_k\fff_2})\\
&&\notag\times2^\f\p2\LLT{\mF_j\mP_k\hhh}^22^j+\notag\sum_{\p\leq j+N_0}\LLT{\mF_\p\mP_k\hhh}(\LLT{\mF_j\mP_k\fff_1}+\LLT{\mF_j\mP_k\fff_2})\LLT{\mF_j\mP_k\hhh}2^{\f32\p}
+2^{-jN}2^{-kN}\\
&&\notag\times(\|\fff_1\|_{L^2_{-N}}+\|\fff_2\|_{L^2_{-N}})\|\hhh\|_{L^2_{-N}}\HHLL{\hhh}
+2^{-2k}(\|\fff_1\|_{L^1_2}+\|\fff_2\|_{L^1_2})\LLT{\mF_j\mP_k\hhh}^22^j+2^{-2k}2^{-jN}\\
&&\times(\|\fff_1\|_{L^1_2}+\|\fff_2\|_{L^1_2})\HHL{\mP_k\hhh}^2+2^{-jN}2^{-kN}(\|\fff_1\|_{L^1_2}+\|\fff_2\|_{L^1_2})\HHLL{\hhh}^2.
\een

	 We then focus on the estimates of $\mathfrak{J}_2$ and split it into:
	\beno
	&&\mathfrak{J}_2=\br{\F_j\cP_kQ_{-1}(\hhh,\ggg_2),\F_j\cP_k\hhh}+\sum_{\l\geq0}\br{\F_j\cP_kQ_\l(\hhh,\ggg_2),\F_j\cP_k\hhh}\\
	&&+\br{\F_j\cP_kQ_{-1}(\hhh,\fff_1),\F_j\cP_k\hhh}+\sum_{\l\geq0}\br{\F_j\cP_kQ_\l(\hhh,\fff_1),\F_j\cP_k\hhh}:=\mathfrak{J}_2^1+\mathfrak{J}_2^2+\mathfrak{J}_2^3+\mathfrak{J}_2^4.
	\eeno
	We use \eqref{l0212} to get the estimates of $\mathfrak{J}_2^1$ and $\mathfrak{J}_2^3$ and we have that
   \ben\label{j213}
&&\notag|\mathfrak{J}_2^1|+|\mathfrak{J}_2^3|\ls_N \sum_{\a\leq j}(\LLT{\mF_\a\mP_k\fff_1}+\LLT{\mF_\a\mP_k\ggg_2})2^{\f32\a}\LLT{\mF_j\mP_k\hhh}^2+\sum_{\p\leq j}\LLT{\mF_\p\mP_k\hhh}2^{-\f\p2}\\
&&\notag\times
(\LLT{\mF_j\mP_k\fff_1}+\LLT{\mF_j\mP_k\ggg_2})\LLT{\mF_j\mP_k\hhh}2^{2j}+\sum_{\p\geq j}\LLT{\mF_\p\mP_k\hhh}(\LLT{\mF_\p\mP_k\fff_1}+\LLT{\mF_\p\mP_k\ggg_2})\\
&&\notag\times\LLT{\mF_j\mP_k\hhh}2^{\f32j}
+2^{-jN}2^{-kN}(\HHLL{\fff_1}+\HHLL{\ggg_2})\HHLL{\hhh}^2\\
&+&2^{-jN}2^{-kN}\|\mP_k\hhh\|_{L^2_{-N}}(\|\mP_k\fff_1\|_{L^2_{-N}}+\|\mP_k\ggg_2\|_{L^2_{-N}})\HHLL{\hhh}.
\een

	 While for $\mathfrak{J}_2^2$ and $\mathfrak{J}_2^4$, we first perform the dyadic decomposition in frequency space as in \eqref{asin}. Rather than directly applying \eqref{M1}\eqref{M4}\eqref{M2} and \eqref{M3}, we require additional estimates for $\fM^1, \fM^2, \fM^3$ and $\fM^4$.  Noticing that for $\p\geq0$, it holds that
	\[\|\F_\p a_\l*\F_\p g\|_{L^\infty}\leq\int_{\R^3}\vf(2^{-\p}\xi)2^{2\l}\nr{\wh{a_0}(2^\l\xi)}\nr{\wh{\F_\p g}(\xi)}d\xi\ls_N2^{-2\l N}2^{-2\p N}\LLT{\F_\p g}\ls\|g\|_{H^{-\f12}}2^{-\l N}2^{-\p N},\]
	where we use $|\wh{a_0}(\eta)|\ls_N\<\eta\>^{-4N}$. Thus
	\ben\label{M12}
	\nr{\fM^1_{\l,\p,\a}(g,h,f)}\ls_{N}2^{-\l N}2^{-\p N}2^{-\a N}\|g\|_{H^{-\f12}}\LLT{\F_\a h}\LLT{\tF_\p f},\\\label{M22}
	\nr{\fM^4_{\l,\p,\m}(g,h,f)}\ls_{N}2^{-\l N}2^{-\p N}2^{-\m N}\|g\|_{H^{-\f12}}\LLT{\tF_\p h}\LLT{\F_\m f}.
	\een
	For $\fM^2$, we first have that
\beno
\fM^2_{\l,\a}(g,h,f)=-\int_{\R^3}(a_\l*\tS_{\a-N_0}g):\na \F_\a h\mult\na\tF_\a fdv+\int_{\R^3}(b_\l*\tS_{\a-N_0}g)\cdot\F_\a h\na\tF_\a fdv.
\eeno
For the first term, note that
	\beno
	&&\int_{\R^3}(a_\l*\tS_{\a-N_0}g):\na \F_\a h\mult\na\tF_\a fdv\\
	&=&\iint_{\R^6}|v-v_*|^{-3}\vf(2^{-\l}(v-v_*))\rr{\tS_{\a-N_0}g}_* \big(\rr{v-v_*}\times \na \F_\a h\big)\cdot\big({\rr{v-v_*}}\times \na \tF_\a f\big)dv_*dv.
	\eeno
Since 
	\[\ba
	\int_{\R^3}|v-v_*|^{-3}\vf(2^{-\l}(v-v_*))\rr{\tS_{\a-N_0}g}_*dv_*\leq\int_{\R^3}\nr{\wh{|\cdot|^{-3}\vf}(2^{\l}\xi)}\nr{\wh{g}(\xi)}d\xi\\
	\ls\sum_{j\geq0}2^{-3l}2^{-j}\LLT{\F_j g}+\LLO{\wh{|\cdot|^{-3}\vf(\cdot)}}2^{-3l}\|\F_{-1}g\|_{L^2_2}\ls2^{-3l}\|g\|_{H^{-\f12}_2},
	\ea\]
	where we use $|\wh{|\cdot|^{-3}\vf(\cdot)}(\eta)|\ls\<\eta\>^{-4}$ and $\|\hat{g}(\xi)\|_{L^\infty}\ls \|\<v\>^2g\|_{L^2}$. Thus we get
	\beno
	&&\Big|\iint_{\R^6}|v-v_*|^{-3}\vf(2^{-\l}(v-v_*))\rr{\tS_{\a-N_0}g}_* \rr{v\times \na \F_\a h}\cdot\rr{v\times \na \tF_\a f}dv_*dv\Big|\\
	&\ls& 2^{-3\l}\|g\|_{H^{-\f12}_2}\Big(\LLT{\rr{-\Delta_{\S^2}}^{\f{1}{2}}\F_\a h}+\|\F_\a h\|_{L^2}2^\a\Big)\Big(\LLT{\rr{-\Delta_{\S^2}}^{\f{1}{2}}\F_\a f}+\|\tF_\a f\|_{L^2}2^\a\Big).
	\eeno
	By the similar argument, we can derive that
	\beno
	&&\Big|\int_{\R^3}(a_\l*\tS_{\a-N_0}g):\na \F_\a h\mult\na\tF_\a fdv\Big|\\
	&\ls&  2^{-3\l}\|g\|_{H^{-\f12}_4}\Big(\LLT{\rr{-\Delta_{\S^2}}^{\f{1}{2}}\F_\a h}+\|\F_\a h\|_{L^2}2^\a\Big)\Big(\LLT{\rr{-\Delta_{\S^2}}^{\f{1}{2}}\F_\a f}+\|\tF_\a f\|_{L^2}2^\a\Big).
	\eeno
	and
	\beno
	\Big|\int_{\R^3}(b_\l*\tS_{\a-N_0}g)\cdot\F_\a h\na\tF_\a fdv\Big|\ls 2^{-2\l}\|g\|_{H^{-\f12}_2}\|\F_\a h\|_{L^2}\|\tF_\a f\|_{L^2}2^\a.
	\eeno
Then we conclude that
	\ben\label{dfg}
&&|\fM^2_{\l,\a}(g,h,f)|\ls2^{-3\l}\|g\|_{H^{-\f12}_4}\rr{\LLT{\rr{-\Delta_{\S^2}}^{\f{1}{2}}\F_\a h}+\LLT{\F_\a h}2^\a}\\
&&\notag\times\rr{\LLT{\rr{-\Delta_{\S^2}}^{\f{1}{2}}\tF_\a f}+\LLT{\tF_\a f}2^\a}+2^{-2\l}\|g\|_{H^{-\f12}_2}\LLT{\F_\a h}\LLT{\tF_\a f}2^\a.
\een
As a matter of fact, we can repeat the above amplification to obtain

	\ben\label{M32}
&&|\fM^2_{\l,\a}(g,h,f)|\ls\min\Big\{2^{-3\l}\|g\|_{H^{-\f12}_4}\rr{\LLT{\rr{-\Delta_{\S^2}}^{\f{1}{2}}\F_\a h}+\LLT{\F_\a h}2^\a}\\
	&&\notag\times\rr{\LLT{\rr{-\Delta_{\S^2}}^{\f{1}{2}}\tF_\a f}+\LLT{\tF_\a f}2^\a}+2^{-2\l}\|g\|_{H^{-\f12}_4}\LLT{\F_\a h}\LLT{\tF_\a f}2^\a, 2^{-\l}\|g\|_{H^{-\f12}_4}\LLT{\F_\a h}\LLT{\tF_\a f}2^{2\a}\Big\}.
	\een

The term $\fM^3$ can be estimated in a similar way and has the same bound as above. Now we plug \eqref{M12}\eqref{M22} and \eqref{M32} into \eqref{asin} to obtain that
	\[\ba
	&\br{Q_\l(\hhh,\ggg_2),\cP_k\F_j^2\cP_k\hhh}\ls_N\min\Big\{\sum_{\p\geq-1}2^{-3\l}\|\hhh\|_{H^{-\f12}_4}\rr{\LLT{\rr{-\Delta_{\S^2}}^{\f{1}{2}}\F_\p \ggg_2}+\LLT{\F_\p \ggg_2}2^\p}\\
	&\times\rr{\LLT{\rr{-\Delta_{\S^2}}^{\f{1}{2}}\tF_\p \cP_k\F_j^2\cP_k\hhh}+\LLT{\tF_\p \cP_k\F_j^2\cP_k\hhh}2^\p}+\sum_{\p\geq-1}2^{-2\l}\|\hhh\|_{H^{-\f12}_4}\LLT{\F_\p \ggg_2}\LLT{\tF_\p \cP_k\F_j^2\cP_k\hhh}2^\p,\\
	&\sum_{\p\geq-1}2^{-\l}\|\hhh\|_{H^{-\f12}_4}\LLT{\tF_\p \ggg_2}\LLT{\tF_\p\cP_k\F_j^2\cP_k\hhh}2^{2\p} \Big\}\\
	+&\sum_{\m\leq\p+N_0}2^{-\l N}2^{-2\p N}2^{-2\m N}\|\hhh\|_{H^{-\f12}_2}
	\rr{\LLT{\F_\m \ggg_2}\LLT{\tF_\p\cP_k\F_j^2\cP_k \hhh}+\LLT{\tF_\p \ggg_2}\LLT{\F_\m\cP_k\F_j^2\cP_k\hhh}}. 
	\ea\]
	Next, we perform the dyadic decomposition in phase space as in \eqref{asin2} and use above estimate to get
	\ben\label{j22}
	&&\mathfrak{J}_2^2\ls_N2^{-3k}\|\hhh\|_{H^{-\f12}_5}\Big({\LLT{\rr{-\Delta_{\S^2}}^{\f{1}{2}}\mF_j\mP_k \ggg_2}+\LLT{\mF_j\mP_k \ggg_2}2^j}\Big)\\
	&&\notag\times\Big(\LLT{\rr{-\Delta_{\S^2}}^{\f{1}{2}}\mF_j \mP_k\hhh}
	+\LLT{\mF_j\mP_k\hhh}2^j\Big)+2^{-2k}\|\hhh\|_{H^{-\f12}_5}\LLT{\mF_j\mP_k \ggg_2}\LLT{\mF_j\mP_k\hhh}2^j\\
	&+&\notag2^{-k}2^{-jN}\|\hhh\|_{H^{-\f12}_5}\HHL{\mP_k\ggg_2}\HHL{\mP_k\hhh}
	+2^{-jN}2^{-kN}\|\hhh\|_{H^{-\f12}_5}\HHLL{\ggg_2}\HHLL{\hhh}.
	\een
	By the same process, we can derive that
	\ben\label{j24}
&&\notag\mathfrak{J}_2^4\ls2^{-k}\|\hhh\|_{H^{-\f12}_5}\LLT{\mF_j\mP_k \fff_1}\LLT{\mF_j\mP_k\hhh}2^{2j}
	+2^{-k}2^{-jN}\|\hhh\|_{H^{-\f12}_5}\HHL{\mP_k\fff_1}\HHL{\mP_k\hhh}\\
	&+&2^{-jN}2^{-kN}\|\hhh\|_{H^{-\f12}_5}\HHLL{\fff_1}\HHLL{\hhh}.
	\een
	Plug all the estimates of $\mathfrak{J}_1^1,\mathfrak{J}_1^2$ and $\mathfrak{J}_2^i,i=1,2,3,4$, \eqref{J11}\eqref{J12}\eqref{j213}\eqref{j22} and \eqref{j24} into \eqref{eneq}, and in conjunction with Proposition \ref{coer}, we proceed with some simplifications to derive the following result:
	\ben\label{unique}
	\f12\f{d}{dt}\LLT{\F_j\cP_k\hhh}^2+\kappa\LLT{\F_j\cP_k\hhh}^22^{2j}2^{-3k}+\kappa\LLT{(-\Delta_{\SS^2})^\f12\F_j\cP_k\hhh}^22^{-3k}\ls_N\mathfrak{K}_1+\mathfrak{K}_2,
	\een
	where 
	\beno
	&&\mathfrak{K}_1=\sum_{\p\geq j}\LLT{\mF_\p\mP_k\fff_2}\LLT{\mF_\p\mP_k\hhh}\LLT{\mF_j\mP_k\hhh}2^{\f32j}
	+\sum_{\p\leq j+N_0}\LLT{\mF_\p\mP_k\fff_2}2^{\f{\p}{2}}\LLT{\mF_j\mP_k\hhh}^22^j\\
	&+&\sum_{\p\leq j+N_0}\LLT{\mF_\p\mP_k\hhh}2^{\f{3}{2}\p}\LLT{\mF_j\mP_k\fff_2}\LLT{\mF_j\mP_k\hhh}
	+\sum_{\p\geq j}\LLT{\mF_\p\mP_k\fff_1}\LLT{\mF_\p\mP_k\hhh}\LLT{\mF_j\mP_k\hhh}2^{\f32j}\\
	&+&\sum_{\p\leq j+N_0}\LLT{\mF_\p\mP_k\fff_1}2^{\f{\p}{2}}\LLT{\mF_j\mP_k\hhh}^22^j
	+\sum_{\p\leq j+N_0}\LLT{\mF_\p\mP_k\hhh}2^{-\f{\p}{2}}\LLT{\mF_j\mP_k\fff_1}\LLT{\mF_j\mP_k\hhh}2^{2j}\\
	&+&\sum_{\p\geq j}\LLT{\mF_\p\mP_k\ggg_2}\LLT{\mF_\p\mP_k\hhh}\LLT{\mF_j\mP_k\hhh}2^{\f32j}
	+\sum_{\p\leq j+N_0}\LLT{\mF_\p\mP_k\ggg_2}2^{\f{3}{2}\p}\LLT{\mF_j\mP_k\hhh}^2\\
	&+&\sum_{\p\leq j+N_0}\LLT{\mF_\p\mP_k\hhh}2^{-\f{\p}{2}}\LLT{\mF_j\mP_k\ggg_2}\LLT{\mF_j\mP_k\hhh}2^{2j}
	+2^{-3k}\|\hhh\|_{H^{-\f12}_5}\Big(\LLT{\rr{-\Delta_{\S^2}}^{\f{1}{2}}\mF_j\mP_k \ggg_2}\\
	&+&\LLT{\mF_j\mP_k \ggg_2}2^j\Big)\left(\LLT{\rr{-\Delta_{\S^2}}^{\f{1}{2}}\mF_j \mP_k\hhh}+\LLT{\mF_j\mP_k\hhh}2^j\right)
	:=\sum_{i=1}^{10}\mathfrak{K}_1^i,
	\eeno
	\beno
	&&\mathfrak{K}_2=2^{-2k}\|\hhh\|_{H^{-\f12}_5}\LLT{\mF_j\mP_k \ggg_2}\LLT{\mF_j\mP_k\hhh}2^j+2^{-k}\|\hhh\|_{H^{-\f12}_5}\LLT{\mF_j\mP_k \fff_1}\LLT{\mF_j\mP_k\hhh}2^{2j}\\
	&+&2^{-jN}2^{-kN}\|\fff_1\|_{L^2_{-N}}\|\hhh\|_{L^2_{-N}}\HHLL{\hhh}+2^{-2k}\|\fff_1\|_{L^1_2}\LLT{\mF_j\mP_k\hhh}^22^j
	+2^{-2k}2^{-jN}\|\fff_1\|_{L^1_2}\HHL{\mP_k\hhh}^2\\
	&+&2^{-jN}2^{-kN}\|\fff_1\|_{L^1_2}\HHLL{\hhh}^2
	+2^{-jN}2^{-kN}\|\fff_2\|_{L^2_{-N}}\|\hhh\|_{L^2_{-N}}\HHLL{\hhh}+2^{-2k}\|\fff_2\|_{L^1_2}\LLT{\mF_j\mP_k\hhh}^22^j\\
	&+&2^{-2k}2^{-jN}\|\fff_2\|_{L^1_2}\HHL{\mP_k\hhh}^2
	+2^{-jN}2^{-kN}\|\fff_2\|_{L^1_2}\HHLL{\hhh}^2+2^{-jN}2^{-kN}\HHLL{\ggg_2}\HHLL{\hhh}^2\\
	&+&2^{-jN}2^{-kN}\HHLL{\hhh}\|\mP_k\hhh\|_{L^2_{-N}}\|\mP_k\ggg_2\|_{L^2_{-N}}
	+2^{-k}2^{-jN}\|\hhh\|_{H^{-\f12}_5}\HHL{\mP_k\ggg_2}\HHL{\mP_k\hhh}\\
	&+&2^{-jN}2^{-kN}\|\hhh\|_{H^{-\f12}_5}\HHLL{\ggg_2}\HHLL{\hhh}
	+2^{-k}2^{-jN}\|\hhh\|_{H^{-\f12}_5}\HHL{\mP_k\fff_1}\HHL{\mP_k\hhh}\\
	&+&2^{-jN}2^{-kN}\|\hhh\|_{H^{-\f12}_5}\HHLL{\fff_1}\HHLL{\hhh}.	
	\eeno

	We choose $N=10$ and multiply  $2^{-j}j^{2\sss}2^{10k}$ on both sides of \eqref{unique} and sum up with respect to $j$ and $k$. For the first three terms of $\mathfrak{K}_1$, we have
	\ben\label{K123}
	&&\notag\sum_{j,k}(\mathfrak{K}_1^1+\mathfrak{K}_1^2+\mathfrak{K}_1^3)2^{-j}j^{2\sss}2^{10k}\ls
	\sum_{j,k}\sum_{\p\geq j}\LLT{\mF_\p\mP_k\fff_2}2^{\f\p2}p^\sss\LLT{\mF_\p\mP_k\hhh}2^{\f\p2}p^\sss\LLT{\mF_j\mP_k\hhh}\\
	&&\notag\times 2^{-\f12j}2^{j-\p}\p^{-2\sss}j^{2\sss}2^{10k}+\sum_{j,k}\sum_{\p\leq j+N_0}\LLT{\mF_\p\mP_k\fff_2}2^{-\f{\p}{2}}\LLT{\mF_j\mP_k\hhh}^2 2^j2^{\p-j} j^{2\sss}2^{10k}\\
	&+&\sum_{j,k}\sum_{\p\leq j+N_0}\LLT{\mF_\p\mP_k\hhh}2^{-\f{\p}{2}}\LLT{\mF_j\mP_k\fff_2}2^{\f j2}j^{\sss}\LLT{\mF_j\mP_k\hhh}2^{\f j2}j^{\sss}2^{2(\p-j)}2^{10k}\\
	&\ls&\notag\|\hhh\|_{H^{-\f12}_5}\|\fff_2\|_{H^{\f12,\sss}_\f72}\|\hhh\|_{H^{\f12,\sss}_\f72}+\|\fff_2\|_{H^{-\f12}_5}\|\hhh\|_{H^{\f12,\sss}_\f72}^2\ls\rr{\|\fff_2\|_{H^{-\f12}_5}+\eps}\|\hhh\|_{H^{\f12,\sss}_\f72}^2+C_\eps\|\hhh\|_{H^{-\f12}_5}^2\|\fff_2\|_{H^{\f12,\sss}_\f72}^2.
	\een
	
	For the fourth to sixth terms of $\mathfrak{K}_1$ which contains $\fff_1$, we can use the high regularity of $\fff_1$ to get that
	\ben\label{K456}
	&&\notag\sum_{j,k}(\mathfrak{K}_1^4+\mathfrak{K}_1^5+\mathfrak{K}_1^6)2^{-j}j^{2\sss}2^{10k}\\
	&\ls& \|\fff_1\|_{H^3_5}\|\hhh\|^2_{H^{-\f12}_5}+\|\fff_1\|_{H^1_5}\|\hhh\|^2_{H^{0,\sss}_{\f72}}\ls\eps \|\hhh\|^2_{H^{\f12,\sss}_{\f72}}+\big( \|\fff_1\|_{H^3_5}+C_\eps\|\fff_1\|^2_{H^1_5}\big)\|\hhh\|^2_{H^{-\f12,\sss}_{5}}.
	\een

	While for the seventh to ninth terms containing $\ggg_2$, we have that
	\ben\label{K789}
		&&\notag\sum_{j,k}(\mathfrak{K}_1^7+\mathfrak{K}_1^8+\mathfrak{K}_1^9)2^{-j}j^{2\sss}2^{10k}=\sum_{j,k}\sum_{\p\geq j}\LLT{\mF_\p\mP_k\ggg_2}2^{\f\p2}\p^\sss\LLT{\mF_\p\mP_k\hhh}2^{\f\p2}\p^\sss\LLT{\mF_j\mP_k\hhh}2^{-\f j2}2^{-(\p-j)}2^{10k}\\
		&+&\notag\sum_{j,k}\sum_{\p\leq j+N_0}\LLT{\mF_\p\mP_k\ggg_2}2^{-\f{\p}{2}}\LLT{\mF_j\mP_k\hhh}^22^{j}j^{2\sss}2^{2(\p-j)}2^{10k}
		+\sum_{j,k}\sum_{\p\leq j+N_0}\LLT{\mF_\p\mP_k\hhh}2^{-\f{\p}{2}}\p^\sss\LLT{\mF_j\mP_k\ggg_2}\\
		&&\notag\times\LLT{\mF_j\mP_k\hhh}2^{j}j^{2\sss}2^{10k}\p^{-\sss}\ls \|\ggg_2\|_{H^{-\f12}_5}\|\hhh\|_{H^{\f12,\sss}_\f72}^2+\|\hhh\|_{H^{-\f12,\sss}_5}\|\ggg_2\|_{H^{\f12,\sss}_\f72}\|\hhh\|_{H^{\f12,\sss}_\f72}\\
		&\ls&\rr{\|\ggg_2\|_{H^{-\f12}_5}+\eps}\|\hhh\|_{H^{\f12,\sss}_\f72}^2+C_\eps\|\hhh\|_{H^{-\f12,\sss}_5}^2\|\ggg_2\|_{H^{\f12,\sss}_\f72}^2.
	\een
In which the last term can be summed because \( \sss>\f12 \).

	For the last term $\mathfrak{K}_1^{10}$, we have that
	\ben\label{K10}
	&&\notag\sum_{j,k}\mathfrak{K}_1^{10}2^{-j}j^{2\sss}2^{10k}\ls \sum_{j,k}\|\hhh\|_{H^{-\f12}_5}\rr{\LLT{\rr{-\Delta_{\S^2}}^{\f{1}{2}}\mF_j\mP_k \ggg_2}2^{-\f j2}2^{\f{7k}{2}}j^\sss+\LLT{\mF_j\mP_k \ggg_2}2^{\f j2}2^{\f{7k}{2}}j^\sss}\\
	&&\notag\times\left(\LLT{\rr{-\Delta_{\S^2}}^{\f{1}{2}}\mF_j \mP_k\hhh}2^{-\f j2}2^{\f{7k}{2}}j^\sss+\LLT{\mF_j\mP_k\hhh}2^{\f j2}2^{\f{7k}{2}}j^\sss\right)\\
	&\ls&\notag \|\hhh\|_{H^{-\f12}_5}\Big({\|\ggg_2\|_{H^{\f12,\sss}_\f72}+\|(-\Delta_{\SS^2})^\f12\ggg_2\|_{H^{-\f12,\sss}_\f72}}\Big)\Big({\|\hhh\|_{H^{\f12,\sss}_\f72}+\|(-\Delta_{\SS^2})^\f12\hhh\|_{H^{-\f12,\sss}_\f72}}\Big)\\
	&\ls&\eps\Big({\|\hhh\|_{H^{\f12,\sss}_\f72}^2+\|(-\Delta_{\SS^2})^\f12\hhh\|_{H^{-\f12,\sss}_\f72}^2}\Big)+C_\eps\|\hhh\|_{H^{-\f12}_5}^2\Big({\|\ggg_2\|_{H^{\f12,\sss}_\f72}^2+\|(-\Delta_{\SS^2})^\f12\ggg_2\|_{H^{-\f12,\sss}_\f72}^2}\Big).
	\een

	While the term in $\mathfrak{K}_2$ can be estimated in a similar way, we skip the details here. One may check that
	\ben\label{K2}
	\sum_{j,k}\mathfrak{K}_2 2^{-j}j^{2\sss}2^{10k}\ls_N \eps \|\hhh\|^2_{H^{\f12,\sss}_{\f72}}+(C_\eps+\|\fff_1\|^2_{H^3_5}+\|\ggg_2\|^2_{H^{\f12,\sss}_{\f72}})\|\hhh\|^2_{H^{-\f12,\sss}_5}.
	\een

	Now, let us define \( X(t) := \sum_{j,k} \left\| \mathcal{F}_j \mathcal{P}_k \mathbf{h} \right\|^2 2^{-j} 2^{10k} j^{2\sss} \). By combining all the aforementioned estimates \eqref{K123}\eqref{K456}\eqref{K789}\eqref{K10} and \eqref{K2}, and observing that \( \|\mathbf{f}_2\|_{H^{-\frac{1}{2}}_5} + \|\mathbf{g}_2\|_{H^{-\frac{1}{2}}_5} \lesssim 2\epsilon_1 \) and $\sup_{t}\|\fff_1(t)\|_{H^3_5}\leq C$, we can select a sufficiently small \(\eps\) to deduce that
	\[{X}'\leq CZX, \quad\mbox{with}\quad Z(t)=\|(-\Delta_{\SS^2})^\f12\ggg_2\|_{H^{-\f12,\sss}_\f72}^2+\|\ggg_2\|_{H^{\f12,\sss}_\f72}^2+\|\fff_2\|_{H^{\f12,\sss}_\f72}^2+1.\]
Given that \( Z(t) \in L^1[0, T_2] \) due to equation \eqref{inte} and \( X(0) = 0 \), we deduce that \( X(t) = 0,t\in[0,T_2] \). This implies that \( \mathbf{h}(t) = 0 \) for $t\in[0,T_2]$, and consequently, \( \mathbf{f}(t) = \mathbf{g}(t) \) for $t\in[0,T_2]$. Clearly, this result can be extended to any time $T>0$. This concludes the proof of Theorem \ref{uniquethm}(3).

\end{proof}

\section{Appendix}\label{app}
In the Appendix, we give some useful lemmas on pseudo-Differential operator and basic commutators.
 
\begin{lem}[see \cite{HE} Lemma 5.3]\label{le1.1}
	Let $s, r\in\R$ and $a(v),b(\xi)\in C^\infty$ satisfy for any $\alpha\in\Z^3_+$,
	\ben\label{abconstants}
	|D_v^\al a(v)|\leq C_{1,\al}\<v\>^{r-|\al|},~|D_\xi^\al b(\xi)|\leq C_{2,\al}\<\xi\>^{s-|\al|}
	\een
	for constants $C_{1,\al},C_{2,\al}$. Then there exists a constant $C$ depending only on $s,r$ and finite numbers of $C_{1,\al},C_{2,\al}$ such that for any $f\in \mathscr{S}(\R^3)$,
	\beno
	\|a(v)b(D)f\|_{L^2}\leq C\|\<D\>^s\<v\>^rf\|_{L^2},~\|b(D)a(v)f\|_{L^2}\leq C\|\<v\>^r\<D\>^sf\|_{L^2}.
	\eeno
	As a direct consequence, we get that $\|\<D\>^m\<v\>^lf\|_{L^2}\sim\|\<v\>^l\<D\>^mf\|_{L^2}\sim\|f\|_{H^m_l}$.
\end{lem}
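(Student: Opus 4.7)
The result is a standard output of pseudo-differential symbol calculus, and I would prove it by reducing everything to the zero-order class $S^0_{1,0}$ and invoking the Calder\'on--Vaillancourt theorem. Set $\tilde{a}(v):=a(v)\langle v\rangle^{-r}$ and $\tilde{b}(\xi):=b(\xi)\langle\xi\rangle^{-s}$. By the hypothesis \eqref{abconstants} and the Leibniz rule,
\begin{equation*}
|D_v^\alpha\tilde{a}(v)|\lesssim_\alpha\langle v\rangle^{-|\alpha|},\qquad |D_\xi^\alpha\tilde{b}(\xi)|\lesssim_\alpha\langle\xi\rangle^{-|\alpha|},
\end{equation*}
so both $\tilde{a}$ and $\tilde{b}$ belong to $S^0_{1,0}$. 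In particular $\tilde{a}\in L^\infty$ and $\tilde{b}(D)$ is a bounded Fourier multiplier on $L^2$. After the substitution $g:=\langle D\rangle^s\langle v\rangle^r f$, the first inequality reduces to showing that the operator $T:=a(v)b(D)\langle v\rangle^{-r}\langle D\rangle^{-s}$ is bounded on $L^2$.

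\textbf{Commutation.} Writing $T=\tilde{a}(v)\langle v\rangle^r\,\tilde{b}(D)\langle D\rangle^s\,\langle v\rangle^{-r}\langle D\rangle^{-s}$, the heart of the matter is to commute $\langle v\rangle^r$ past $\tilde{b}(D)\langle D\rangle^s$. I would use the Kohn--Nirenberg composition formula
\begin{equation*}
\tilde{b}(D)\langle D\rangle^s\circ\langle v\rangle^{-r}=\sum_{|\alpha|<M}\frac{1}{\alpha!}\,\mathrm{op}\!\left(\partial_\xi^\alpha\big(\tilde{b}(\xi)\langle\xi\rangle^s\big)\,D_v^\alpha\langle v\rangle^{-r}\right)+R_M,
\end{equation*}
where each summand is a pseudo-differential operator whose symbol has order $s-|\alpha|$ in $\xi$ and decay $\langle v\rangle^{-r-|\alpha|}$ in $v$, and $R_M$ is a remainder of arbitrarily negative order once $M$ is taken large enough (say $M>|r|+|s|+3$). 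The $\alpha=0$ contribution, when combined with the outer $\langle v\rangle^r$ and $\langle D\rangle^{-s}$, collapses to the principal piece $\tilde{a}(v)\tilde{b}(D)$; every $|\alpha|\ge 1$ term produces a pseudo-differential operator whose symbol lies in $S^{-|\alpha|}_{1,0}$ with strictly improved weight decay in $v$, and is therefore bounded on $L^2$.

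\textbf{Conclusion and the second inequality.} The principal piece $\tilde{a}(v)\tilde{b}(D)$ is $L^2$-bounded by $\|\tilde{a}\|_{L^\infty}\|\tilde{b}\|_{L^\infty}$, and every remaining term (including $R_M$) is $L^2$-bounded by the Calder\'on--Vaillancourt theorem applied to zero- or negative-order $S^0_{1,0}$ symbols; summing finitely many bounds proves the first inequality. The second inequality $\|b(D)a(v)f\|_{L^2}\lesssim\|\langle v\rangle^r\langle D\rangle^s f\|_{L^2}$ follows by the symmetric argument with position and frequency interchanged, or equivalently by conjugating with the Fourier transform (an $L^2$ isometry) and reusing the first inequality. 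The consequence $\|\langle D\rangle^m\langle v\rangle^l f\|_{L^2}\sim\|\langle v\rangle^l\langle D\rangle^m f\|_{L^2}\sim\|f\|_{H^m_l}$ is then immediate by specializing to $a(v)=\langle v\rangle^l$, $b(\xi)=\langle\xi\rangle^m$ and applying both inequalities in each direction.

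\textbf{Main obstacle.} The only genuinely delicate step is controlling the remainder $R_M$: one must verify that the Kohn--Nirenberg asymptotic, after finitely many iterations, terminates at an operator whose composition with $\tilde{a}(v)$ and the outer weights is bounded on $L^2$. This is the unique point where the full strength of \eqref{abconstants}---requiring decay of \emph{all} derivatives of $a$ and $b$, not merely finitely many---is actually used. Everything else is formal manipulation within the symbol calculus.
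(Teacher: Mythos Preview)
The paper does not give its own proof of this lemma; it is stated in the appendix with the citation ``see \cite{HE}'' and no argument. Your approach via reduction to $S^0_{1,0}$ and the Calder\'on--Vaillancourt theorem is the standard route and is correct in outline. One small correction: in your ``Main obstacle'' paragraph you claim that controlling the remainder $R_M$ is where the full strength of \eqref{abconstants} on \emph{all} derivatives is used. This is not quite accurate, and in fact contradicts the lemma's conclusion that $C$ depends only on \emph{finitely many} of the $C_{1,\alpha},C_{2,\alpha}$. Both the Calder\'on--Vaillancourt bound and the explicit remainder estimate in the Kohn--Nirenberg expansion require only a fixed finite number of derivative bounds (depending on the dimension and on $|r|,|s|$ through the chosen $M$); the infinitely many symbol estimates in \eqref{abconstants} are assumed for convenience so that the composition calculus applies without having to track orders, but the final operator norm depends on only finitely many of them.
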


\begin{lem}[see \cite{HJ1} Lemma 4.12]\label{le1.2}
	Let $l,s,r\in\R,M(\xi)\in S_{1,0}^r$ and $\Phi(v)\in S_{1,0}^l$. Then there exists a constant $C$ such that $\|[M(D_v),\Phi(v)]f\|_{H^s}\leq C\|f\|_{H_{l-1}^{r+s-1}}$. Moreover, for any $N\in\N,$
	\ben\label{MPHICOMMU}
	M(D_v)\Phi=\Phi M(D_v)+\sum_{1\leq|\al|\leq N}\frac{1}{\al!}\Phi_\al M^\al(D_v)+r_N(v,D_v),
	\een
	where $\Phi_\al(v)=\pa_v^\al\Phi,~M^\al(\xi)=\pa_\xi^\al M(\xi)$ and $\<v\>^{N-l}r_N(v,\xi)\in S^{r-N}_{1,0}$. Moreover, for any $\beta,\beta' \in\Z^3_+$, we have
	\ben\label{rN}
	|\pa^\beta_v\pa^{\beta'}_\xi r_N(v,\xi)|\leq C_{\beta,\beta'}\<\xi\>^{r-N-|\beta|}\<v\>^{l-N-|\beta'|},\quad \|r_{2N+1}(v,D_v)\<D\>^{N}\<v\>^{N}f\|_{L^2}\leq C\|f\|_{L^2}.
	\een
	Furthermore, using (\ref{MPHICOMMU}) repeatedly, we can also obtain that
	\ben\label{MPHICOMMU2}
	M(D_v)\Phi=\Phi M(D_v)+\sum_{1\leq|\al|\leq N}C_{\al} M^\al(D_v)\Phi_\al+C_Nr_N(v,D_v).
	\een
\end{lem}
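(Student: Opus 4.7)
The plan is to derive both asymptotic expansions \eqref{MPHICOMMU} and \eqref{MPHICOMMU2} by a Taylor expansion argument inside the oscillatory integral representation of the composition $M(D_v)\Phi$, and then verify the symbol-class estimates on the remainder $r_N(v,\xi)$ together with the $L^2$-boundedness of the associated pseudo-differential operator. The commutator estimate $\|[M(D_v),\Phi(v)]f\|_{H^s}\leq C\|f\|_{H^{r+s-1}_{l-1}}$ will then be an immediate corollary with $N=1$.

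First, starting from the definition
\[
(M(D_v)\Phi f)(v)=\frac{1}{(2\pi)^3}\iint e^{i(v-u)\cdot\xi}M(\xi)\Phi(u)f(u)\,du\,d\xi
\]
for $f\in\mathscr{S}(\R^3)$, I would Taylor-expand $\Phi(u)$ at $u=v$ up to order $N$ with integral remainder,
\[
\Phi(u)=\sum_{|\al|<N}\frac{(u-v)^\al}{\al!}\Phi_\al(v)+N\sum_{|\al|=N}\frac{(u-v)^\al}{\al!}\int_0^1(1-\theta)^{N-1}\Phi_\al(v+\theta(u-v))\,d\theta.
\]
The polynomial piece is treated via the identity $(u-v)^\al e^{i(v-u)\cdot\xi}=(i\partial_\xi)^\al e^{i(v-u)\cdot\xi}$ followed by integration by parts in $\xi$, which produces the explicit terms $\tfrac{1}{\al!}\Phi_\al(v)M^\al(D_v)$, with the sign $(-i)^{|\al|}$ absorbed into the convention for $M^\al=\partial_\xi^\al M$. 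The Taylor remainder furnishes $r_N(v,\xi)$ as an oscillatory integral depending on $\Phi_\al(v+\theta(u-v))$ for $|\al|=N$; performing the same $\xi$-integration by parts and differentiating under the remaining $\theta$-integral yields both the pointwise symbol estimate in \eqref{rN} and the containment $r_N\in\langle v\rangle^{l-N}S^{r-N}_{1,0}$, directly from the hypotheses \eqref{abconstants} on $M$ and $\Phi$.

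Next, for the $L^2$-boundedness claim on $r_{2N+1}(v,D_v)$, my plan is to factor out the polynomial weight by writing $\tilde r_N(v,\xi):=\langle v\rangle^{N-l}r_N(v,\xi)\in S^{r-N}_{1,0}$ uniformly, and then apply the Calderon--Vaillancourt theorem to the conjugated operator $\langle v\rangle^{-N}\langle D\rangle^{-N}r_{2N+1}(v,D_v)\langle v\rangle^{-N}\langle D\rangle^{-N}$, whose full symbol lies in $S^0_{0,0}$ as soon as $N$ is chosen large enough relative to $r$ and $l$. This produces the bound $\|r_{2N+1}(v,D_v)\langle D\rangle^N\langle v\rangle^N f\|_{L^2}\leq C\|f\|_{L^2}$ once Lemma \ref{le1.1} is invoked to commute the weights. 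The commutator estimate is then the $N=1$ case of \eqref{MPHICOMMU}, whose leading correction $\sum_{|\al|=1}\frac{1}{\al!}\Phi_\al M^\al(D_v)$ maps $H^{r+s-1}_{l-1}$ into $H^s$ by Lemma \ref{le1.1}. The dual expansion \eqref{MPHICOMMU2} is deduced by iterating \eqref{MPHICOMMU} once on each term: every summand $\tfrac{1}{\al!}\Phi_\al M^\al(D_v)$ is re-expanded to swap the order of $\Phi_\al$ and $M^\al(D_v)$, producing new remainders of strictly higher order in $N$ which are reabsorbed into the final $C_N r_N$ after finitely many steps.

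The principal technical obstacle is the non-absolute convergence of the oscillatory integrals defining both $M(D_v)\Phi f$ and $r_N(v,D_v)f$, since $M(\xi)$ only has polynomial growth in $\xi$ and the remainder itself may lie in a symbol class of nonnegative order when $N$ is small. I would handle this by the standard regularization scheme: insert a Schwartz cutoff $\chi(\varepsilon\xi)$ in the $\xi$-integral, perform all Taylor expansions and integrations by parts in the regularized setting, and pass to the limit $\varepsilon\to 0^+$ using the decay gained by repeated integration by parts against the phase via $e^{i(v-u)\cdot\xi}=(1+|v-u|^2)^{-K}(1-\Delta_\xi)^K e^{i(v-u)\cdot\xi}$. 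Once this limiting procedure is justified, every subsequent estimate reduces to differentiation under the integral sign and the Leibniz rule, and the Calderon--Vaillancourt continuity theorem closes the argument.
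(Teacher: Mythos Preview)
The paper does not supply its own proof of this lemma: it is stated in the Appendix with the citation ``see \cite{HJ1}'' and no further argument. Your proposal is therefore not competing with a proof in the paper, and the outline you give---Taylor-expand the amplitude $\Phi(u)$ about $u=v$, convert $(u-v)^\alpha$ to $\xi$-derivatives on $M$ by integration by parts, read off the symbol class of the remainder, and appeal to Calder\'on--Vaillancourt for the $L^2$ bound---is exactly the standard route for this kind of statement and is correct. One minor point: your Taylor expansion keeps explicit terms for $|\alpha|<N$ with remainder at order $N$, while the lemma records explicit terms for $1\le|\alpha|\le N$ with remainder $r_N$; this is only an off-by-one in indexing and does not affect the argument. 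Your derivation of \eqref{MPHICOMMU2} by iterating \eqref{MPHICOMMU} on each $\Phi_\alpha M^\alpha(D_v)$ and absorbing the higher-order corrections into the final remainder matches what the paper intends by ``using \eqref{MPHICOMMU} repeatedly.''
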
 
 
\begin{rmk}\label{CONSTS} We emphasize that in the statement of Lemma \ref{le1.2}, the constant $C$ appearing in the inequality depends only on $C_{1,\al},C_{2,\al}$ in \eqref{abconstants} with $a=\Phi$ and $b=M$ and also the constants $ C_{\beta,\beta'}$ for $r_N(v,\xi)$. This fact is crucial for the estimates of commutators and the profiles of weighted Sobolev spaces. For instance,  if $M(D_v)$ and $\Phi(v)$ are chosen to be the localized operators $\F_j$ and $\cP_k$, the constant $C$ in Lemma \ref{le1.2} does not depend on $j$ and $k$. Indeed,
	for any $k\geq0,N\in\N$, $2^{Nk}\vphi(2^{-k}v)$ satisfies that for any $\alpha\in\Z^3_+$,
	\ben\label{Ncon}
	|D_v^\al 2^{Nk}\vphi(2^{-k}v) |\leq C_{N,\al}\<v\>^{N-|\al|}|\vphi_\al(2^{-k}v)|\leq C_{N,\al}\<v\>^{N-|\al|}.
	\een
\end{rmk}

\begin{lem}\label{7.8}
	(Bernstein inequality). There exists a constant $C$ independent of $j$ and $f$ such that
	
	(1) For any $s\in\R$ and $j\geq 0$,
	\beno
	C^{-1}2^{js}\|\F_jf\|_{L^2(\R^3)}\leq\|\F_jf\|_{H^s(\R^3)}\leq C 2^{js}\|\F_jf\|_{L^2(\R^3)}.
	\eeno
	
	(2) For integers $j,k\geq0$ and $p,q\in[1,\infty],q\geq p$, the Bernstein inequality are shown as
	\beno
	&&\sup_{|\al|=k}\|\pa^\al\F_jf\|_{L^q(\R^3)}\ls2^{jk}2^{3j(\frac{1}{p}-\frac{1}{q})}\|\F_jf\|_{L^p(\R^3)},
	\sup_{|\al|=k}\|\pa^\al S_jf\|_{L^q(\R^3)}\ls2^{jk}2^{3j(\frac{1}{p}-\frac{1}{q})}\|S_jf\|_{L^p(\R^3)},\\
	&&2^{jk}\|\F_jf\|_{L^p(\R^3)}\ls\sup_{|\al|=k}\|\pa^\al\F_jf\|_{L^p(\R^3)}\ls2^{jk}\|\F_jf\|_{L^p(\R^3)}.
	\eeno
\end{lem}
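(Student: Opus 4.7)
\medskip

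\noindent\textbf{Proof proposal for Lemma~\ref{7.8} (Bernstein inequality).}

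The plan is to reduce both parts to the standard convolution/Fourier representation of $\mathcal{F}_j$ and $S_j = \widetilde{S}_j$ and then exploit the localization of the Fourier support in the annulus $\{|\xi|\sim 2^j\}$ for $j\geq 0$ (respectively the ball $\{|\xi|\lesssim 1\}$ for $j=-1$).

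For part (1), I would observe that, by definition of $\mathcal{F}_j$, the Fourier support of $\mathcal{F}_jf$ is contained in the annulus $\mathcal{C}_j:=\{(3/4)\,2^j\leq|\xi|\leq(8/3)\,2^j\}$, on which $\langle\xi\rangle\sim 2^j$. Thus
\[
\|\mathcal{F}_jf\|_{H^s}^2=\int_{\mathcal{C}_j}\langle\xi\rangle^{2s}|\varphi(2^{-j}\xi)\widehat{f}(\xi)|^2\,d\xi\sim 2^{2js}\int_{\mathcal{C}_j}|\varphi(2^{-j}\xi)\widehat{f}(\xi)|^2d\xi=2^{2js}\|\mathcal{F}_jf\|_{L^2}^2,
\]
which gives the two-sided bound with a constant depending only on the profile $\varphi$.

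For part (2), I would use the convolution representation $\mathcal{F}_jf=\widetilde{\varphi}_j*f$ with $\widetilde{\varphi}_j(v)=2^{3j}\widetilde{\varphi}(2^jv)$, so that
\[
\partial^{\alpha}\mathcal{F}_jf=(\partial^{\alpha}\widetilde{\varphi}_j)*f,\qquad \partial^{\alpha}\widetilde{\varphi}_j(v)=2^{j|\alpha|}\,2^{3j}(\partial^{\alpha}\widetilde{\varphi})(2^jv).
\]
By Young's inequality with exponents $(1+1/q=1/r+1/p)$ where $r=p\cdot q/(q(p-1)+p)$ (so that $r\in[1,\infty]$ and $1/r=1-1/p+1/q\geq 0$ since $q\geq p$), and noting that the scaling gives $\|\partial^{\alpha}\widetilde{\varphi}_j\|_{L^r}=2^{j|\alpha|}\cdot 2^{3j(1-1/r)}\|\partial^{\alpha}\widetilde{\varphi}\|_{L^r}$, we obtain
\[
\|\partial^{\alpha}\mathcal{F}_jf\|_{L^q}\leq\|\partial^{\alpha}\widetilde{\varphi}_j\|_{L^r}\|\mathcal{F}_jf\|_{L^p}\lesssim 2^{j|\alpha|}\,2^{3j(1/p-1/q)}\|\mathcal{F}_jf\|_{L^p},
\]
which is the asserted forward Bernstein bound. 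The same argument, applied with $\widetilde{\psi}\in\mathscr{S}$ in place of $\widetilde{\varphi}$ and $2^{3j}\widetilde{\psi}(2^j\cdot)$ as kernel, gives the statement for $S_j$.

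The reverse Bernstein $2^{jk}\|\mathcal{F}_jf\|_{L^p}\lesssim \sup_{|\alpha|=k}\|\partial^{\alpha}\mathcal{F}_jf\|_{L^p}$ is the only slightly nontrivial point. The idea is to pick a smooth function $\chi\in C_c^\infty(\R^3)$ with $\chi\equiv 1$ on the support of $\varphi$ (so that $\chi(2^{-j}\xi)\varphi(2^{-j}\xi)=\varphi(2^{-j}\xi)$), and use that on the annulus $\{|\xi|\sim 1\}$ the symbol $\chi(\xi)/(i\xi)^{\alpha}$ makes sense for a suitably chosen $\alpha$ with $|\alpha|=k$ as the Fourier transform of an $L^1$ function $K_\alpha$. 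Then, writing
\[
\mathcal{F}_jf=K_{\alpha,j}*\partial^{\alpha}\mathcal{F}_jf,\qquad K_{\alpha,j}(v):=2^{3j-jk}K_\alpha(2^jv),
\]
Young's inequality and $\|K_{\alpha,j}\|_{L^1}=2^{-jk}\|K_\alpha\|_{L^1}$ give the desired bound. Choosing $\alpha$ as the largest component of $k$, say $\alpha=(k,0,0)$, suffices, and then the supremum over $|\alpha|=k$ dominates it.

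The main obstacle, such as it is, is the reverse Bernstein step: one has to verify that the function $\chi(\xi)/(i\xi)^\alpha$ (for an appropriate $\alpha$) is the Fourier transform of an integrable function, which is standard but requires a careful choice of $\chi$ supported away from the origin on the annulus. All constants can be tracked to be independent of $j$ because they arise purely from norms of the fixed profiles $\varphi,\psi,\chi$ and their derivatives.
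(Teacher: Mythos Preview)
The paper states Lemma~\ref{7.8} without proof, treating it as the classical Bernstein inequality from Littlewood--Paley theory. Your argument is exactly the standard one and is correct for part~(1) and for the forward bounds in part~(2).

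There is, however, a genuine gap in your reverse Bernstein step. You propose to invert $\partial^{\alpha}$ via the multiplier $\chi(\xi)/(i\xi)^{\alpha}$ for a \emph{single} multi-index, e.g.\ $\alpha=(k,0,0)$. But $(i\xi_1)^{k}$ vanishes on the plane $\{\xi_1=0\}$, which intersects the annulus $\{3/4\leq|\xi|\leq 8/3\}$ nontrivially, so $\chi(\xi)/(i\xi_1)^{k}$ is singular there and is not the Fourier transform of an $L^1$ function. The usual fix is to use all multi-indices of order $k$ simultaneously via the identity
\[
|\xi|^{2k}=\sum_{|\beta|=k}\frac{k!}{\beta!}\,(\xi^{\beta})^{2},
\qquad\text{so}\qquad
1=\sum_{|\beta|=k}\frac{k!}{\beta!}\,\frac{\xi^{\beta}}{|\xi|^{2k}}\,\xi^{\beta}.
\]
Each multiplier $m_{\beta}(\xi):=\chi(\xi)\,\xi^{\beta}/|\xi|^{2k}$ is then smooth with compact support away from the origin, hence $\check m_{\beta}\in L^1$; scaling gives $\|K_{\beta,j}\|_{L^1}=2^{-jk}\|\check m_{\beta}\|_{L^1}$, and Young's inequality yields
\[
\|\mathcal{F}_jf\|_{L^p}\lesssim 2^{-jk}\sum_{|\beta|=k}\|\partial^{\beta}\mathcal{F}_jf\|_{L^p}\lesssim 2^{-jk}\sup_{|\beta|=k}\|\partial^{\beta}\mathcal{F}_jf\|_{L^p}.
\]
With this correction your proof is complete and matches the standard textbook argument.
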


\begin{lem}\label{PFCom}
	If $\cP_k,\U_k$ and $\F_j$ are defined in Section \ref{DDS} and $n\in\R^+$, then
	
	(i) For any $N\in \N$, there exists a constant $C_{N}$ such that
	\beno
	\|[\cP_k,\F_j ]f\|_{L^2}&=&\|(\cP_k\F_j -\F_j\cP_k)f\|_{L^2}\leq C_{N} \big( 2^{ -j}2^{-k}\sum_{|\al|=1}^{2N}\|\cP_{k,\al}\F_{j,\al}f\|_{L^2}+2^{-jN}2^{-kN}\|f\|_{H_{-N}^{-N}}\big),\\
	\|[\U_k,\F_j]f\|_{L^2}&=&\|(\U_k\F_j-\F_j \U_k)f\|_{L^2}\leq C_{N } \big( 2^{ -j}\sum_{|\al|=1}^{2N}\|\U_{k,\al}\F_{j,\al}f\|_{L^2}+2^{-jN}\|f\|_{H_{-N}^{ -N}}\big),
	\eeno
	where $\cP_{k,\al},\F_{j,\al}$ and $\U_{k,\al}$ are defined in Definition \ref{Fj}. Moreover, replace $\cP_{k,\al}$ and $\F_{j,\al}$ by $\tP_{k,\al}$ and $\tF_{j,\al}$ respectively, then the above results still hold.
	
	(ii) For $|m-p|>N_0$ and $\forall N\in \N$, there exists a constant $C_N$ such that
	\beno
	\|\F_m\cP_k\F_pg\|_{L^2}\leq C_N2^{-(p+m+k)N}\|\F_pg\|_{L^2_{-N}},\quad \|\F_m\U_{k}\F_pg\|_{L^2}\leq C_N2^{-(p+m)N}\|\F_pg\|_{L^2_{-N}}.
	\eeno
	If $m>p+N_0$, we have
	$\|\F_m\cP_{k}S_pg\|_{L^2}\leq C_N2^{-(m+k)N}\|S_pg\|_{L^2_{-N}},
    \|\F_m\U_{k}S_pg\|_{L^2}\leq C_N2^{-mN}\|S_pg\|_{L^2_{-N}}.$

	(iii) For any $a,w\in \R$, we have
	$\|\U_{k+N_0}h\|_{H^a}\leq C_{a,w}2^{k(-w)^+}\|h\|_{H^a_w}$ and
	$\|S_{p+N_0}h\|_{L^2_l}\leq C_{l}\|h\|_{L^2_l}.$

	(iv) For any $j\geq -1$ and $l\in\R$, we have
	$ \|\F_jf\|_{L^1_l}+\|S_jf\|_{L^1_l}\leq C_l\|f\|_{L^1_l}.$
\end{lem}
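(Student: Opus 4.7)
All four parts are standard Littlewood--Paley / pseudo-differential estimates that exploit the rescaling structure of the cutoffs: $\cP_k$ is multiplication by $\vphi(2^{-k}v)$ and $\F_j$ is the Fourier multiplier $\vphi(2^{-j}D)$, so $\pa^\alpha\vphi(2^{-k}v) = 2^{-k|\alpha|}\vphi_\alpha(2^{-k}v)$ and $\pa^\alpha\vphi(2^{-j}\xi) = 2^{-j|\alpha|}\vphi_\alpha(2^{-j}\xi)$, with $\cP_{k,\alpha}$ and $\F_{j,\alpha}$ obeying uniform $S^0_{1,0}$ bounds by Remark~\ref{CONSTS} and \eqref{Ncon}. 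The central tool is Lemma~\ref{le1.2}, especially the commutator expansions \eqref{MPHICOMMU}--\eqref{MPHICOMMU2} and the remainder bound \eqref{rN}. For part (i), I would apply \eqref{MPHICOMMU} with $\Phi=\cP_k$ and $M(\xi)=\vphi(2^{-j}\xi)$ (or $\psi(\xi)$ when $j=-1$): each term $\Phi_\alpha M^\alpha(D)$ equals $2^{-(j+k)|\alpha|}\cP_{k,\alpha}\F_{j,\alpha}$ up to constants, so the leading $|\alpha|=1$ block contributes the stated $2^{-j-k}$ gain and higher $|\alpha|$ terms give faster decay. After invoking \eqref{MPHICOMMU2} to arrange the factors as $\cP_{k,\alpha}\F_{j,\alpha}f$, summing yields the main term $2^{-j-k}\sum_{1\le|\alpha|\le 2N}\|\cP_{k,\alpha}\F_{j,\alpha}f\|_{L^2}$, while $r_{2N}(v,D)$ is controlled by \eqref{rN} as $2^{-jN}2^{-kN}\|f\|_{H^{-N}_{-N}}$. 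The $\U_k$ version is identical because $\U_k$ is multiplication by a compactly supported smooth bump at scale $2^k$ satisfying the same uniform symbol estimates; the statements for $\tP_{k,\alpha},\tF_{j,\alpha}$ follow by applying the same expansion to each dyadic block and taking a finite sum.

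Part (ii) is driven by frequency separation. Since $\F_m\F_p=0$ when $|m-p|>N_0$, one rewrites
\[
\F_m\cP_k\F_p g = \F_m\bigl(\cP_k\F_p-\F_p\cP_k\bigr)g = \F_m[\cP_k,\F_p]g.
\]
Applying the order-$N$ commutator expansion from part (i) controls the main contribution by $2^{-N(k+p)}\sum_\alpha\|\cP_{k,\alpha}\F_{p,\alpha}g\|_{L^2}$. To reach the weighted norm $\|\F_pg\|_{L^2_{-N}}$ on the right, I would exploit the physical-space support of $\cP_{k,\alpha}\subset\{|v|\sim 2^k\}$ (or $|v|\lesssim 1$ when $k=-1$), on which $\wei^N\lesssim 2^{Nk}$, so that
\[
\|\cP_{k,\alpha}\F_{p,\alpha}g\|_{L^2}\le 2^{Nk}\|\wei^{-N}\F_{p,\alpha}g\|_{L^2}\lesssim 2^{Nk}\|\F_p g\|_{L^2_{-N}},
\]
the last step by Lemma~\ref{le1.1}. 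To produce the additional $2^{-mN}$ factor, I would iterate the commutator expansion once more with $\F_m$, using the Schwartz decay of $\tphi_m=\cF^{-1}\vphi(2^{-m}\cdot)$: since $\F_m$ is paired with a function whose physical support lies at scale $2^k$ and whose Fourier support is at scale $2^p$, both separated from $2^m$, one picks up an extra $2^{-mN}$. The $\U_k$ variants follow by summing over $\l\le k$ (which loses the $2^{-Nk}$ decay, as the statement reflects), and the $S_p$ variants by the same argument after replacing $\F_p$ by its partial sum.

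For (iii), $\U_{k+N_0}$ is multiplication by a smooth function $\chi_{k+N_0}$ supported in $\{|v|\lesssim 2^k\}$; writing $\U_{k+N_0}h=(\chi_{k+N_0}\wei^{-w})(\wei^w h)$ and applying Lemma~\ref{le1.1} gives the bound, with $\|\chi_{k+N_0}\wei^{-w}\|_{L^\infty}\lesssim 2^{k(-w)^+}$ by direct inspection on the support. The frequency-space analog for $S_{p+N_0}$ on $L^2_l$ requires no extra factor because commuting $S_{p+N_0}$ past $\wei^l$ via Lemma~\ref{le1.2} produces only a lower-order (uniformly bounded) error. Part (iv) is immediate from Young's inequality combined with Peetre's inequality $\wei^l\le C_l\lr{v-u}^{|l|}\lr{u}^l$: one gets $\|\F_jf\|_{L^1_l}\le C_l\|\tphi_j\|_{L^1_{|l|}}\|f\|_{L^1_l}$, and the change of variables $z=2^jw$ shows $\|\tphi_j\|_{L^1_{|l|}}$ is uniformly bounded for $j\ge-1$ by the Schwartz property of $\tphi$. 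The only genuine technical point is the joint three-factor decay $2^{-(m+p+k)N}$ in (ii), which requires careful interleaving of the symbolic expansion (giving $2^{-(k+p)N}$), the support-based weight absorption (converting $L^2$ to $L^2_{-N}$ at the cost of $2^{kN}$, absorbed by choosing a larger initial $N$), and the Schwartz decay of $\tphi_m$ (producing the final $2^{-mN}$); Remark~\ref{CONSTS} is essential throughout to keep all constants uniform in $j,k$.
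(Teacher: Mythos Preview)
Your proposal is correct and follows the same approach the paper intends: the paper's own proof simply reads ``It follows the proof of Lemma A.4 in \cite{HJ1} and the estimate \eqref{rN}. We omit the details here,'' and your reconstruction via the symbolic commutator expansion \eqref{MPHICOMMU}--\eqref{MPHICOMMU2} of Lemma~\ref{le1.2}, the remainder bound \eqref{rN}, and the uniform symbol estimates of Remark~\ref{CONSTS} is precisely what that reference unpacks to. Your handling of the triple decay $2^{-(m+p+k)N}$ in part~(ii) by iterating the expansion (first $[\cP_k,\F_p]$, then exploiting $\F_m\F_{p,\alpha}=0$ to commute again) is the correct mechanism, and parts~(iii)--(iv) are indeed the routine weight/Young arguments you describe.
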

\begin{proof} It follows the proof of Lemma A.4 in \cite{HJ1} and the estimate \eqref{rN}. We omit the details here.
\end{proof}

 \begin{lem}\label{profileofHSk}
 	Recall the localized operators $\F,\tF,\cP,\tP$ and $\mF,\mP$ defined in Section \ref{DDS}  and Definition \ref{Fj}, we have 
 	
	(i) Let $m, l\in \R.$ Then for $f\in H_l^m$,
	\ben\label{Ber}
	\sum_{k=-1}^\infty2^{2kl}\|\tP_k f\|^2_{H^m}\sim\sum_{k=-1}^\infty2^{2kl}\|\cP_k f\|^2_{H^m}\sim\|f\|^2_{H^m_l}\sim\sum_{j=-1}^\infty2^{2 j m}\|\F_jf\|^2_{L^2_l}\sim\sum_{j=-1}^\infty2^{2 j m}\|\tF_jf\|^2_{L^2_l}.
	\een
	Moreover, we   have
	\ben\label{7.70}
	\sum_{k=-1}^\infty2^{2kl}\|\mP_{k}f\|^2_{H^m}\le C_{m,l}\sum_{k=-1}^\infty2^{2kl}\|\cP_kf\|^2_{H^m},\quad
	\sum_{j=-1}^\infty2^{2jm}\|\mF_{j}f\|^2_{L^2_l}\le C_{m,l}\sum_{j=-1}^\infty2^{2jm}\|\F_jf\|^2_{L^2_l}.
	\een
	
	(ii) If $m, n, l\in\R$ and $\delta>0$, then we have
	\ben\label{7.77}
	\sum_{j=-1}^\infty2^{2 j n}  \|\F_jf\|^2_{H^m_l}  \lesssim C_{m, n, l}\|f\|^2_{H^{m+n}_l},\quad \|f\|_{H^{-\frac{3}{2}-\delta}_l}   \lesssim C_l\|f\|_{L^1_l}.
	\een

     (iii) It holds that $\|\F_j\cP_k f\|_{H^1_{-3/2}}^2\gs  2^{2j}2^{-3k}\|\F_j\cP_k f\|_{L^2}^2-C_N2^{-5k}\|\mF_j\mP_k f\|_{L^2}^2-C_N2^{-2Nj}2^{-2Nk}\|f\|_{H^{-N}_{-N}}^2$ and 
     \ben\label{gFjkhf}|\int_{\R^3} g(\F_j\cP_k h)(\F_j\cP_k f)dv|\ls\sum_{m\leq j+N_0}\LLT{\mF_m\mP_kg}2^{3m/2}\LLT{\mF_j\mP_kh}\LLT{\mF_j\mP_kf}\\\notag+C_N\rr{\HHLL{g}\LLT{\mF_j\mP_kh}\LLT{\mF_j\mP_kf}+2^{-jN}2^{-kN}\HHLL{g}\HHLL{h}\HHLL{f}}. \een
\end{lem}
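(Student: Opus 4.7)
The plan is to reduce everything to the commutator estimates between frequency localizations $\F_j$, phase localizations $\cP_k$, and polynomial weights $\langle v\rangle^l$ that are already collected in Lemma \ref{le1.2} and Lemma \ref{PFCom}, combined with Bernstein's inequality (Lemma \ref{7.8}).

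For (i), I first handle the phase-space equivalences. Since $\{\cP_k\}_{k\ge -1}$ is a smooth partition of unity subordinate to an annular covering, $\cP_k\cP_{k'}=0$ for $|k-k'|\ge N_0$, and $\langle v\rangle^l\sim 2^{kl}$ on $\mathrm{supp}\,\cP_k$ for $k\ge 0$. Therefore $\sum_k 2^{2kl}\|\cP_k f\|_{L^2}^2\sim\|f\|_{L^2_l}^2$ trivially. To promote this to $H^m$ I write $\langle D\rangle^m\langle v\rangle^l f=\sum_k \langle D\rangle^m(\langle v\rangle^l\cP_k f)+\sum_k [\langle D\rangle^m,\langle v\rangle^l]\cP_k f$ and use Lemma \ref{le1.2} to trade the commutator for lower-order weighted $H^{m-1}_{l-1}$ terms; almost-orthogonality (the $\cP_k$'s have essentially disjoint supports) then yields the first chain. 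The version with $\tP_k$ or $\mP_k$ follows from Lemma \ref{PFCom} (iii) and the bounded-overlap property. For the frequency-space equivalence I proceed analogously: the identity $\sum_j 2^{2jm}\|\F_j f\|_{L^2}^2\sim\|f\|_{H^m}^2$ is classical Littlewood--Paley, and to incorporate the weight I use the commutator formula \eqref{MPHICOMMU} with $M=\varphi(2^{-j}\cdot)$ and $\Phi=\langle v\rangle^l$, noting (as in Remark \ref{CONSTS}) that the constants are uniform in $j$.

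For (ii), the first inequality is immediate from (i) once one applies $\F_j$ inside and uses that $\F_j\F_k=0$ for $|j-k|\ge N_0$. The second follows from the Sobolev embedding $L^1(\mathbb R^3)\hookrightarrow H^{-3/2-\delta}(\mathbb R^3)$ applied dyadically: localize by $\cP_k$, multiply by $\langle v\rangle^l\sim 2^{kl}$, apply the embedding to each piece, and resum using the first chain in \eqref{Ber}.

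For the first estimate in (iii), Bernstein gives $\|\nabla(\F_j\cP_k f)\|_{L^2}\gtrsim 2^j\|\F_j\cP_k f\|_{L^2}$ (for $j\ge 0$; the $j=-1$ case is trivial). To insert the weight $\langle v\rangle^{-3/2}$, I use that $\langle v\rangle^{-3/2}\sim 2^{-3k/2}$ on $\mathrm{supp}\,\cP_k$ and rewrite $\langle v\rangle^{-3/2}\F_j\cP_k f=\F_j\cP_k(\langle v\rangle^{-3/2}f)+[\langle v\rangle^{-3/2},\F_j\cP_k]f$, bounding the commutator by Lemma \ref{PFCom}(i) in terms of $\mF_j\mP_k f$ plus the negligible tail $2^{-jN}2^{-kN}\|f\|_{H^{-N}_{-N}}$.

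For the trilinear inequality \eqref{gFjkhf} I decompose $g=\sum_m \F_m g$. When $m\le j+N_0$ the frequency of $g$ does not exceed that of $\F_j\cP_k h$, and Bernstein yields $\|\F_m g\|_{L^\infty}\lesssim 2^{3m/2}\|\F_m g\|_{L^2}$, producing the first term on the right-hand side (after inserting $\mF_m,\mP_k$ to absorb the phase-space commutators coming from $\cP_k$). When $m> j+N_0$ the high-frequency portion of $g$ is almost orthogonal to the product $(\F_j\cP_k h)(\F_j\cP_k f)$, whose Fourier support lies in $\{|\xi|\lesssim 2^{j+1}\}$; integration by parts $N$ times against $\F_m g$ gains a factor $2^{-mN}2^{jN}$, and combined with Cauchy--Schwarz and the weight placement this feeds into the error term $2^{-jN}2^{-kN}\|g\|_{H^{-N}_{-N}}\|h\|_{H^{-N}_{-N}}\|f\|_{H^{-N}_{-N}}$; the intermediate regime where no gain is available yields the $\|g\|_{H^{-N}_{-N}}\|\mF_j\mP_k h\|_{L^2}\|\mF_j\mP_k f\|_{L^2}$ contribution.

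The main technical obstacle is the uniform-in-$(j,k)$ control of the commutators $[\langle v\rangle^s,\F_j]$ and $[\cP_k,\F_j]$ together with the weight $\langle v\rangle^{-3/2}$ in (iii); this is exactly the content of Remark \ref{CONSTS}, ensuring that the constants from Lemma \ref{le1.2} depend only on finitely many seminorms of $\varphi,\psi$ and not on $j,k$, which is what makes all resummations lossless.
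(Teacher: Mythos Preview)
Your treatment of (i), (ii), and the first inequality in (iii) matches the paper's approach: the paper simply cites \cite{HJ1} for (i)--(ii), and for the lower bound in (iii) it uses the characterization $\|\cdot\|_{H^1_{-3/2}}^2\sim\sum_m 2^{-3m}\|\cP_m\langle D_v\rangle\,\cdot\,\|_{L^2}^2$ and then commutes $\cP_m$ past $\langle D_v\rangle\F_j$ via Lemma~\ref{le1.2}, which is a minor variant of what you describe.

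For the trilinear estimate \eqref{gFjkhf}, however, your explanation of the error terms is off. First, the contributions with $m>j+N_0$ vanish \emph{exactly} by Fourier support (the product $(\F_j\cP_k h)(\F_j\cP_k f)$ has spectrum in $\{|\xi|\lesssim 2^{j+1}\}$), so no integration by parts is needed and no error arises from that regime. The actual difficulty --- which you leave as a parenthetical --- is justifying why $g$ can be replaced by $\mP_k g$ in the main term: the function $\F_j\cP_k h$ is frequency-localized but not compactly supported in $v$, so the product is only \emph{approximately} concentrated where $|v|\sim 2^k$. The paper makes this precise by writing $g=\sum_{m\le j+N_0}\sum_l\F_m\cP_l g$ and inserting a further phase partition $\sum_\ell\cP_\ell$ on the full integrand, then splitting into three cases according to whether $|l-\ell|\le N_0$ and $|k-\ell|\le 2N_0$. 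Only the fully matched case $|l-k|\lesssim 1$ gives the main term; the two mismatched cases produce precisely the two error terms on the right of \eqref{gFjkhf}, via the rapid-decay estimates for $\cP_\ell\F_m\cP_l$ and $\tP_\ell\F_j\cP_k$ when the phase indices are separated (Lemma~\ref{PFCom}). So both error terms come from phase-space mismatches, not from any high-frequency or ``intermediate'' frequency contribution of $g$ as your sketch suggests.
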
 
\begin{rmk}
	We emphasize that \eqref{Ber} and \eqref{7.70} also hold if we replace norm $H^m$ by $H^{m,\sss},\sss\geq0$ and the proof is analogous.
\end{rmk}
\begin{proof} We only need to provide a proof for $(iii)$ since the others have been proved in \cite{HJ1}. We observe that 
\beno \|\F_j\cP_k f\|_{H^1_{-3/2}}^2\sim \sum_{m\ge-1} 2^{-3m}\|\cP_m\lr{D_v}\F_j\cP_k f\|_{L^2}^2\ge \sum_{m\ge-1} 2^{-3m}\rr{\|\lr{D_v}\F_j\cP_m\cP_k f\|_{L^2}^2-\|[\cP_m,\lr{D_v}\F_j]\cP_k f\|_{L^2}^2}. \eeno
Thanks to Lemma \ref{7.8}(1) and Lemma \ref{PFCom}, we have 
\beno&&\sum_{m\ge-1} 2^{-3m}\|\lr{D_v}\F_j\cP_m\cP_k f\|_{L^2}^2\sim \sum_{|m-k|\le  N_0} 2^{-3m}2^{2j}\|\F_j\cP_m\cP_k f\|_{L^2}^2\\&&\gs 2^{-3k}2^{2j}\|\F_j\cP_k f\|_{L^2}^2-C_N2^{-5k}\|\mF_j\mP_k f\|_{L^2}^2-C_N2^{-2Nj}2^{-2Nk}\|f\|_{H^{-N}_{-N}}^2.
\eeno 

From \eqref{MPHICOMMU2} ,  if $M(D_v):=2^{jN}\lr{D_v}\F_j\in S^{N+1}_{1,0}$, then 
  \ben [\cP_m,2^{jN}\lr{D_v}\F_j]=\sum_{1\leq|\al|\leq 2N}C_{\al}2^{-m|\alpha|}  M^\al(D_v)\cP_{m,\alpha}+ r_{2N+1}(v,D_v). \een
Using \eqref{rN} and Lemma \ref{le1.1}, we further have
\beno \|r_{2N+1}(v,D_v)\cP_k f\|_{L^2}\ls \|\lr{D_v}^{-N} \lr{\cdot}^{-N} \cP_k f\|_{L^2}\ls 2^{-kN/2}\|f\|_{H^{-N}_{-N/2}}.\eeno 
From this together with Lemma \ref{PFCom}, we deduce that
\beno \sum_m 2^{-3m}\|[\lr{D_v}\F_j,\cP_m]\cP_k f\|_{L^2}^2\ls C_N2^{-5k}\|\mF_j\mP_k f\|_{L^2}^2+2^{-kN}2^{-jN}\|f\|_{H^{-N}_{-N}}^2.\eeno
Then desired result follows.

To prove \eqref{gFjkhf}, we notice that
\beno \int_{\R^3} g(\F_j\cP_k h)(\F_j\cP_k f)dv=\sum_{m\le j+N_0}\sum_{l,\ell\ge-1} \int_{\R^3} \cP_\ell(\F_m\cP_l g)\tP_\ell(\F_j\cP_k h)\tP_\ell(\F_j\cP_k f)dv\\
=\sum_{m\le j+N_0}\sum_{|l-\ell|\leq N_0}\sum_{|k-\ell|\leq 2N_0}+\sum_{m\le j+N_0}\sum_{|l-\ell|\leq N_0}\sum_{|k-\ell|> 2N_0}+\sum_{m\le j+N_0}\sum_{|l-\ell|> N_0}:=\mathscr{A}_1+\mathscr{A}_2+\mathscr{A}_3. \eeno 
For $\mathscr{A}_1$, by Bernstein inequality, it holds that
\[\ba
\nr{\mathscr{A}_1}&\leq\sum_{m\le j+N_0}\sum_{|l-\ell|\leq N_0}\sum_{|k-\ell|\leq 2N_0}\LLF{\cP_\ell\F_m\cP_l g}\LLT{\tP_\ell\F_j\cP_k h}\LLT{\tP_\ell\F_j\cP_k f}\\
&\ls\sum_{m\leq j+N_0}\sum_{|l-k|\leq 3N_0}\LLF{\F_m\cP_l g}\LLT{\F_j\cP_k h}\LLT{\F_j\cP_k f}\\&\ls\sum_{m\leq j+N_0}\LLT{\mF_m\mP_kg}2^{3m/2}\LLT{\mF_j\mP_kh}\LLT{\mF_j\mP_kf}.
\ea\]
For $\mathscr{A}_2$, by \eqref{7.70}, it holds that
\[\ba
\nr{\mathscr{A}_2}&\ls\sum_{m\le j+N_0}\sum_{|l-\ell|\leq N_0}\sum_{|k-\ell|> 2N_0}\LLT{\cP_\ell\F_m\cP_l g}\|\tP_\ell\F_j\cP_k h\|_{H^2}\LLT{\tP_\ell\F_j\cP_k f}\\
&\ls_N \sum_{m\le j+N_0}\sum_{\ell,l}\LLT{\F_m\cP_l g}2^{-lN}2^{-\ell N}2^{-2jN}2^{-kN}\HHLL{f}\HHLL{h}\\
&\ls_N2^{-jN}2^{-kN}\HHLL{g}\HHLL{h}\HHLL{f}.
\ea\]
Similarly for $\mathscr{A}_3$, it holds that
\[\ba
\nr{\mathscr{A}_3}&\ls\sum_{m\le j+N_0}\sum_{|l-\ell|> N_0}\|\cP_\ell\F_m\cP_l g\|_{H^2}\LLT{\F_j\cP_k h}\LLT{\F_j\cP_k f}\\
&\ls_N\sum_{m\le j+N_0}\sum_{l,\ell}2^{-lN}
2^{-mN}2^{-\ell N}\HHLL{g}\LLT{\mF_j\mP_kh}\LLT{\mF_j\mP_kf}\\
&\ls_N\HHLL{g}\LLT{\mF_j\mP_kh}\LLT{\mF_j\mP_kf}.\ea\]
Then we complete the proof.
\end{proof}

{\bf Conflict of interested.} The authors declare that they have no conflict of interest.

{\bf Data availability statement.} Data sharing not applicable to this article as no datasets were generated or analyzed.

{\bf Acknowledgments.}6 The research of L.-B. He was supported by NSF of China under Grant No.11771236 and New Cornerstone Investigator Program 100001127. Jie Ji was supported by Jiangsu Funding Program for Excellent Postdoctoral Talent and Basic Research Program of Jiangsu under Grant No.BK20251376.

\end{document}